\documentclass[12pt]{article} % Standard base class
\usepackage[numbers,sort&compress,super,comma]{natbib} % this must be before neurips_2026
\usepackage[preprint]{neurips_2026}

\usepackage[utf8]{inputenc}
\usepackage[english]{babel}
\usepackage{amssymb, amsmath, amsfonts}
\usepackage{mathtools, mathrsfs, dsfont}
\usepackage{bbm}
\usepackage[rightcaption,raggedright]{sidecap}
\sidecaptionvpos{figure}{c} % vertically align scfigure in sidecap
\usepackage{caption}
\usepackage{enumitem}
\usepackage{xcolor}
\usepackage{hyperref}
\usepackage{tikz}
\usetikzlibrary{arrows.meta, patterns, decorations.markings, calc}
\usepackage{times}
\usepackage{booktabs}
\usepackage{tabularx}
\usepackage{pdflscape}

\graphicspath{{./}}

\usepackage{amsthm}
\theoremstyle{plain}
\newtheorem{theorem}{Theorem}
\newtheorem{lemma}[theorem]{Lemma}
\newtheorem{corollary}[theorem]{Corollary}
\newtheorem{proposition}[theorem]{Proposition}

\theoremstyle{definition}
\newtheorem{definition}{Definition}

\newtheorem{example}{Example}

\theoremstyle{remark}
\newtheorem{remark}{Remark}
\newcommand{\reals}{\mathbb{R}}
\newcommand{\vol}{\operatorname{vol}}
\newcommand{\boa}{\operatorname{BoA}}
\newcommand{\mcX}{\mathcal{X}}
\newcommand{\mcF}{\mathcal{F}}
\newcommand{\mcM}{\mathcal{M}}
\newcommand{\mcV}{\mathcal{V}}
\newcommand{\mcE}{\mathcal{E}}
\newcommand{\mcB}{\mathcal{B}}
\DeclareMathOperator{\Lip}{Lip}
\DeclareMathOperator{\MSE}{MSE}
\DeclareMathOperator{\dist}{dist}

\author{
  \'Abel S\'agodi and Il Memming Park \\[0.5em]
  \small \texttt{\{abel.sagodi, memming.park\}@research.fchampalimaud.org}
}

\title{\Large \makebox[\textwidth][c]{%
  \begin{tabular}{@{}c@{}}
    Universal Approximation Theorems for Dynamical Systems \\
    with Infinite-Time Horizon Guarantees
  \end{tabular}}
}

\begin{document}

\maketitle

%==============================================================================
\begin{abstract}
A trained dynamical model can pass every finite-time test yet fail in the long run in three ways: it can settle to the wrong steady state, drift out of phase, or collapse a continuum of memory states into isolated ones.
We prove that Neural ODEs can avoid all three, matching a target system's trajectories over the infinite-time horizon $[0,\infty)$ on all but a small set of initial conditions ($\varepsilon$-$\delta$ closeness: trajectory error within $\varepsilon$ except for initial conditions of measure $< \delta$).
The guarantee is not special to Neural ODEs: it holds for any model class rich enough to approximate a vector field together with its Jacobian.
We cover three target classes, the dynamical primitives of decision-making, rhythm generation, and analog memory: 
structurally stable systems with a finite number of hyperbolic periodic orbits (fixed points or limit cycles via exact period matching), and systems with normally hyperbolic continuous attractors (via discretization).
%Morse-Smale systems with hyperbolic fixed points, Morse-Smale systems with hyperbolic limit cycles (via exact period matching), and systems with normally hyperbolic continuous attractors (via discretization).
A temporal generalization bound, $\varepsilon$-$\delta$ closeness implies time-averaged $L^p$ error $\leq \varepsilon^p + \delta \cdot D^p$ for all $t \geq 0$, connects these guarantees to training metrics.
This is the first universal approximation framework for multistable dynamics over an unbounded horizon.
\end{abstract}
%\begin{keywords}
%Nonlinear dynamical systems, universal approximation, recurrent neural network, continuous attractor, stable limit cycle
%\end{keywords}

%==============================================================================
\section{Introduction}\label{sec:intro}

Universal approximation results provide rigorous guarantees about the expressive capacity of neural networks.
For dynamical systems, Recurrent Neural Networks (RNNs) are widely cited as universal approximators~\citep{funahashi1993approximation}, justifying their deployment in models of neural computation~\citep{durstewitz2023reconstructing,vyas2020ctd}.
However, a fundamental gap exists between this theoretical promise and biological reality.
Existing guarantees are strictly limited to \emph{finite} time horizons or systems with a globally stable equilibrium (the fading memory property). 
This restriction explicitly excludes \textbf{multistability}, i.e., the coexistence of multiple attractors, which is the dynamical basis of essential cognitive functions. 
Decision-making relies on selecting among distinct basins of attraction; working memory requires self-sustaining persistent activity; and neural oscillations (limit cycles) drive rhythmic motor control~\citep{townley2000existence, kag2020rnns, chang2019antisymmetricrnn, rapp1987periodic}. 
Consequently, current theories fail to address the very dynamical regimes required for computation.
Extending guarantees to infinite time is crucial for \textbf{temporal generalization}: ensuring models coherently replicate dynamics over indefinite durations rather than finite windows.
Three fundamental failure modes prevent naive extension of finite-time results:
\begin{itemize}[leftmargin=*,itemsep=2pt]
\item \textbf{B-type error} (Basin mismatch): Small approximation errors near separatrices push trajectories into incorrect basins of attraction.
\item \textbf{P-type error} (Phase drift): For limit cycles, minute period mismatches cause unbounded phase divergence as $t \to \infty$.
\item \textbf{D-type error} (Discretization): Continuous attractors are not structurally stable; generic perturbations destroy their continua of marginally stable fixed points.
\end{itemize}
These topological obstructions require specialized treatment beyond standard Gr\"onwall-based error analysis, which yields exponentially growing bounds unusable for infinite time.

This work establishes universal approximation results for multistable dynamical systems over infinite time horizons.
Our approach exploits structural stability theory: Morse-Smale systems---a significant subset of all structurally stable systems~\citep{bonatti2011survey}---are robust to small $C^1$ perturbations, enabling infinite-time bounds.
For limit cycles, we additionally require \emph{exact period matching} via a localized correction procedure.

Our analysis adopts a learning-theoretic framework: we define \textbf{target classes} $\mcF$ of dynamical systems to be approximated, a \textbf{hypothesis class} $\hat{\mcF}$ of Neural ODEs (Definition~\ref{def:hypothesis_class}), and an \textbf{approximation criterion} ($\varepsilon$-$\delta$ closeness, Definition~\ref{def:eps_delta_close}).
Universal approximation means: for any $f \in \mcF$ and any $\varepsilon, \delta > 0$, there exists $\hat{f} \in \hat{\mcF}$ achieving $\varepsilon$-$\delta$ closeness over infinite time.

\subsection{Main Results (Informal)}

We establish universal approximation for three target classes. % with increasing generality.
Throughout, $\varepsilon$-$\delta$ closeness means: the volume of initial conditions with trajectory error exceeding $\varepsilon$ is less than $\delta$ (Definition~\ref{def:eps_delta_close}).

\begin{quote}
\textbf{Theorem FP} (Fixed Points, Informal; see Theorem~\ref{thm:uap_fp}).
\textit{For any Morse-Smale system with hyperbolic fixed points and any $\varepsilon, \delta > 0$, there exists a finite-size Neural ODE that is $\varepsilon$-$\delta$ close to the target for all $t \in [0, \infty)$.}
\end{quote}

\begin{quote}
\textbf{Theorem LC} (Limit Cycles, Informal; see Theorem~\ref{thm:uap_lc}).
\textit{For Morse-Smale systems with hyperbolic limit cycles, Neural ODEs achieve $\varepsilon$-$\delta$ closeness via exact period matching through localized vector field scaling.}
\end{quote}

\begin{quote}
\textbf{Theorem CA} (Continuous Attractors, Informal; see Theorem~\ref{thm:uap_ca}).
\textit{For systems with normally hyperbolic continuous attractors (line attractors, ring attractors, isochronous cylinders), Neural ODEs achieve $\varepsilon$-$\delta$ closeness via tiling---approximation by a dense grid of discrete attractors with spacing $< \varepsilon$.}
\end{quote}

Finally, we establish a temporal generalization bound to bridge these  guarantees to  training metrics:

\begin{quote}
\textbf{Theorem TG} (Temporal Generalization, Informal; see Theorem~\ref{thm:lp_bound}).
\textit{If $\hat{\varphi}$ is $\varepsilon$-$\delta$ close to $\varphi$, then the time-averaged $L^p$ error satisfies $\mathcal{E}_{p,\infty} \leq \varepsilon^p + \delta \cdot D^p$, where $D$ is the domain diameter.
This bridges topological guarantees to practical training metrics like Mean Squared Error (MSE).}
\end{quote}

\subsection{Contributions}

\begin{enumerate}[leftmargin=*,itemsep=2pt]
\item We establish \textbf{universal approximation results for multistable dynamical systems over infinite time horizons} (Theorems~\ref{thm:uap_fp}--\ref{thm:uap_ca}).
\item We prove these guarantees are achievable with \textbf{finite-size Neural ODEs}, not requiring infinite width or depth (Section~\ref{sec:main}).
\item We introduce \textbf{exact period matching} via localized vector field scaling to eliminate P-type error for limit cycles (Theorem~\ref{thm:uap_lc}).
\item We derive a \textbf{temporal generalization bound} linking $\varepsilon$-$\delta$ closeness to $L^p$ error (Theorem~\ref{thm:lp_bound}).
\end{enumerate}

Figure~\ref{fig:landscape} illustrates the landscape of universal approximation results.
Prior infinite-time theories were limited to fading memory systems (a single global attractor).
Our results extend to the full class of Morse-Smale systems and normally hyperbolic continuous attractors~\citep{Sagodi2024a}.

\begin{SCfigure}[15][bthp]
\centering
\includegraphics[width=0.4\textwidth]{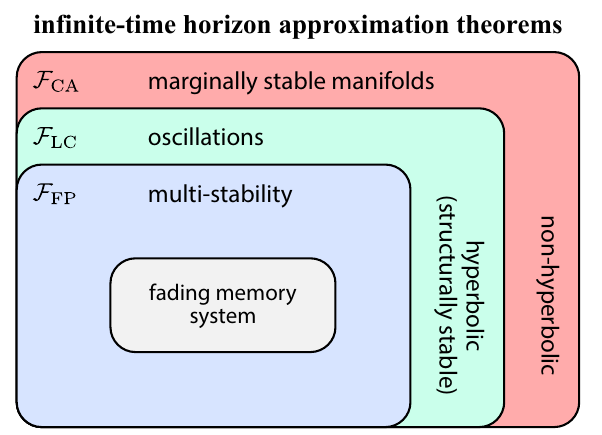}
\caption{The landscape of universal approximation for dynamical systems.
Existing infinite-time results require the fading memory property (FMP), excluding multistability.
We prove Neural ODEs are dense in Morse-Smale systems ($\mcF_{\mathrm{FP}}$, $\mcF_{\mathrm{LC}}$) and normally hyperbolic continuous attractors ($\mcF_{\mathrm{CA}}$), subject to isochrony for oscillatory manifolds.
}
\label{fig:landscape}
\end{SCfigure}

\paragraph{Scope.}
Our theorems cover Morse-Smale systems (hyperbolic fixed points and limit cycles) and normally hyperbolic continuous attractors. They exclude chaotic targets: a positive Lyapunov exponent makes the infinite-horizon bound vacuous, and the trajectory-tracking metric enforces this exclusion, so chaotic benchmarks lie outside the framework by design. We do not claim Morse-Smale systems are generic; they are open and dense in dimension two~\citep{peixoto1959ss}, but not dense in dimension three and higher~\citep{smale1966structurally}. Our proofs rely on openness rather than density: a learned field that is Morse-Smale keeps its qualitative dynamics under weight perturbation, which is what makes the guarantee meaningful. Our third target class, continuous attractors, is structurally unstable and a genericity argument would exclude it; we include it because it is a computational primitive, and Theorem~\ref{thm:uap_ca} shows it remains approximable despite the instability.

\subsection{Related Work}

The theoretical capability of neural networks to model dynamical systems is well-established, but existing results largely bifurcate into two regimes: finite-time approximation for general systems, and infinite-time approximation for restricted classes of stable systems.

Universal approximation for RNNs on finite time horizons was established by~\citet{funahashi1993approximation} and \citet{li1992approximation}, however, these guarantees rely on Grönwall inequalities where error bounds grow exponentially with time, rendering them vacuous for long-term behavior~\citep{sontag1998learning}.
This lineage was expanded by \citet{kimura1998learning} and proven for various architectures by \citet{chow2000modeling}, extending to time-variant systems with fixed initial states \citep{li2005approximation}. 
This limitation is universal across architectures: it applies to discrete-time recurrence~\citep{jin1995universal, aguiar2023universal}, time-variant systems~\citep{li2005approximation}, and Neural ODEs~\citep{chen2018neural}.
While Neural ODEs are universal approximators for homeomorphisms~\citep{zhang2020approximation}, flow maps~\citep{li2022deep}, and functions on manifolds via geometric controllability~\citep{elamvazhuthi2023learning}, these results only approximate the diffeomorphism $\phi_t$ for a fixed $t$, providing no guarantee of topological equivalence or attractor preservation as $t \to \infty$. Recent work enforces exact fixed-point constraints for provable universality~\citep{pacifico2026exact}, but does not address the infinite-horizon trajectory guarantees or the limit-cycle and continuous-attractor cases we treat.

For infinite time, existing results require the \emph{fading memory property}~\citep{boyd1985fading}, which implies convergence to a unique equilibrium and excludes multistability.
Echo State Networks~\citep{jaeger2001echo} and reservoir computing~\citep{grigoryeva2018echo} achieve infinite-time universality only for fading memory systems.
The fading memory property ensures that the influence of past inputs decays asymptotically, allowing for infinite-time approximation of filters \citep{gonon2019reservoir} and State Space Models \citep{wang2024state}.
Similarly, \citet{nakamura2009approximation} extended approximation to infinite horizons only for systems with a globally asymptotically stable equilibrium. 
\citet{hanson2020universal} provided infinite-time results based on uniform asymptotic incremental stability, a condition that forces all trajectories to converge to each other.
Finally, while \citet{hart2020embedding} showed that ESNs can form attracting submanifolds topologically conjugate to structurally stable diffeomorphisms, their result is restricted to discrete-time maps and requires embedding the dynamics into a high-dimensional reservoir state space.
Recent Neural ODE architectures instead build in convergence by construction~\citep{mei2024controlsynth}, but the guaranteed convergence is again to a single stable equilibrium, precluding the multistability our target classes require.
Explicit universal approximation guarantees for multistable continuous-time flows in their native state space have remained elusive.

Alternatively, operator-theoretic approaches attempt to globalize dynamics by lifting the state to an infinite-dimensional space where evolution becomes linear. 
Koopman operator theory~\citep{koopman1931hamiltonian, mezic2005spectral} lifts dynamics to a space of observables, while Carleman linearization embeds polynomial nonlinearities into an infinite system of moments~\citep{carleman1932application, forets2017explicit}.
While finite-dimensional truncations like Extended Dynamic Mode Decomposition (EDMD)~\citep{williams2015data} can capture global features, they theoretically require infinite width to eliminate closure error and generally lack rigorous uniform trajectory guarantees~\citep{brunton2021modern}. Koopman eigenfunctions have also been used to identify the separatrices themselves~\citep{dabholkar2025finding}, the measure-zero loci where our B-type error concentrates.

Our work extends these results to infinite time for structurally stable systems.
For a comprehensive review of the literature on universal approximation for dynamical systems, including finite-time results, fading memory systems, and reservoir computing, see App.~\ref{app:literature}.

\begin{remark}[Direct vs.\ embedding approximation]
Finite-dimensional RNNs face a structural limitation: they cannot express an arbitrary number of attractors without increasing dimension~\citep{funahashi1993approximation,hwang2019lc,hwang2020lc,yi2003multistability}.
Consequently, RNN universality requires \emph{embedding} the target into a higher-dimensional latent space.
In contrast, Neural ODEs enable \emph{direct} approximation: the learned vector field operates in the same state space as the target, preserving the original coordinates and attractor geometry.
This distinction is crucial for mechanistic interpretability of learned dynamics.
\end{remark}

%==============================================================================
\section{Preliminaries}\label{sec:prelim}

Let $\mcX \subset \reals^n$ be a bounded open set.
We denote by $\mathfrak{X}^1(\mcX)$ the space of $C^1$ vector fields $f: \mcX \to \reals^n$ whose trajectories remain in $\mcX$ for all $t \geq 0$.
The \textbf{flow} $\varphi: \reals_{\geq 0} \times \mcX \to \mcX$ satisfies $\frac{d}{dt}\varphi(t, x_0) = f(\varphi(t, x_0))$ with $\varphi(0, x_0) = x_0$.

Standard definitions from dynamical systems theory ($\omega$-limit sets, non-wandering sets, transversal intersections, separatrices, tubular neighborhoods) are collected in App.~\ref{app:definitions}.

\begin{definition}[Hypothesis class with $C^1$ UAP]\label{def:hypothesis_class}
A class $\hat{\mcF}$ of $C^1$ functions $\hat{f}: \mcX \to \reals^n$ has the \textbf{$C^1$ universal approximation property} if for every $f \in C^1(\mcX, \reals^n)$ and every $\eta > 0$, there exists $\hat{f} \in \hat{\mcF}$ with $\|f - \hat{f}\|_{C^1} < \eta$, where $\|g\|_{C^1} \coloneqq \sup_x \|g(x)\| + \sup_x \|Dg(x)\|_{\mathrm{op}}$ controls both function values and derivatives.
\end{definition}

\begin{remark}
Standard feedforward networks with smooth activations (e.g., tanh, sigmoid) achieve $C^r$ approximation for any $r \geq 0$ on compact domains~\citep{hornik1991approximation}, with explicit width and depth bounds for the joint function-and-derivative ($C^1$) error in the tanh case~\citep{deryck2021approximation}.
The property is not tied to one activation: periodic-activation networks~\citep{sitzmann2020implicit}, Kolmogorov-Arnold spline networks~\citep{liu2024kan}, and piecewise-polynomial (ReLU$^k$) networks that approximate a function together with its derivatives~\citep{belomestny2023simultaneous} all satisfy Definition~\ref{def:hypothesis_class}; even plain ReLU networks qualify after mollification.
\end{remark}

\begin{definition}[Neural ODE]\label{def:node}
A \textbf{Neural ODE} is a dynamical system $\dot{x} = \hat{f}(x)$ where $\hat{f} \in \hat{\mcF}$ for a hypothesis class $\hat{\mcF}$ with the $C^1$ universal approximation property (Definition~\ref{def:hypothesis_class}).
\end{definition}

\begin{definition}[Hyperbolic fixed point]\label{def:hyperbolic_fp}
A fixed point $x^*$ of $\dot{x} = f(x)$ is \textbf{hyperbolic} if all eigenvalues of the Jacobian $Df(x^*)$ have nonzero real part.
\end{definition}

\begin{definition}[Hyperbolic periodic orbit]\label{def:hyperbolic_lc}
A periodic orbit $\gamma$ with period $T > 0$ is \textbf{hyperbolic} if all eigenvalues of the Poincar\'e return map (Definition~\ref{def:poincare_map}) on a transverse section have modulus different from 1.
\end{definition}

\begin{definition}[Morse-Smale system]\label{def:morse_smale}
A $C^1$ vector field $f$ is \textbf{Morse-Smale} if:
(1) the non-wandering set $\Omega(f)$ is a finite collection of hyperbolic fixed points and hyperbolic periodic orbits, where $x$ is \emph{non-wandering} if for every neighborhood $U$ of $x$ and every $T > 0$ there exists $t > T$ with $\varphi(t, U) \cap U \neq \emptyset$; and
(2) all stable and unstable manifolds intersect \emph{transversally}: at every intersection point $p$, the tangent spaces satisfy $T_p W^s + T_p W^u = \mathbb{R}^n$.
\end{definition}

\begin{theorem}[Palis-Smale~{\citep{palis1970ss}}]\label{thm:palis_smale}
Every Morse-Smale system is structurally stable: small $C^1$ perturbations yield topologically equivalent flows.
\end{theorem}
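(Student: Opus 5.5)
The plan follows the classical Palis--Smale strategy: build an explicit topological equivalence in two stages. First I would fix the Morse--Smale field $f$ and a $C^1$-close field $g$. I would then linearize near the non-wandering set and propagate the conjugacy outward along invariant manifolds. The approach splits into the following steps.

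\emph{Step 1: persistence of the non-wandering set.} By the hyperbolicity in Definition~\ref{def:hyperbolic_fp}, each fixed point $p$ of $f$ has a unique continuation $p_g$ for $g$ close to $f$; this follows from the implicit function theorem, since $Df(p)$ is invertible. Each hyperbolic periodic orbit (Definition~\ref{def:hyperbolic_lc}) likewise has a continuation, obtained by applying the implicit function theorem to the Poincar\'e map (Definition~\ref{def:poincare_map}) on a fixed transverse section. By the Hartman--Grobman theorem, together with its version for Poincar\'e maps, $f$ and $g$ are locally conjugate near corresponding elements. I would then show that $\Omega(g)$ contains nothing else. To do so, I would use a filtration (Lyapunov function) adapted to $f$, which exists because a Morse--Smale flow has no cycles among its basic sets. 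Such a filtration is $C^1$-robust, so every $g$-orbit outside small neighborhoods of the continued elements strictly descends levels and is wandering.

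\emph{Step 2: persistence of transversality.} Stable and unstable manifolds depend continuously on the field in the $C^1$ topology on compact pieces, by the stable manifold theorem with parameters. Transversal intersection is an open condition. Hence $g$ is again Morse--Smale, and it has the same combinatorial ``phase diagram'' $\sigma_i \to \sigma_j$ whenever $W^u(\sigma_i)\cap W^s(\sigma_j)\neq\emptyset$.

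\emph{Step 3: constructing the equivalence.} This is the main obstacle. Following Palis--Smale, I would build compatible systems of tubular families $\{T^u_i\}$ around the unstable manifolds, ordering the basic sets by the filtration. These are invariant fibrations near each $\sigma_i$ whose fibers are transverse to $W^s(\sigma_i)$, and they are compatible in the sense that fibers of $\sigma_j$ restricted near $\sigma_i$ refine the fibration there. The $\lambda$-lemma (inclination lemma) is the key tool: it guarantees that a disk transverse to $W^s(\sigma_i)$ accumulates $C^1$ on $W^u(\sigma_i)$, which lets the fibers be glued consistently across intersections. One then defines $h$ on fundamental domains. I would start at the attractors and sources and proceed inductively down the filtration, matching fibers of $f$ with fibers of $g$ and extending by the flows. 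Orbits are respected because the map is defined as conjugating the flows on fundamental domains. Continuity at the boundaries of successive neighborhoods follows from fiber compatibility. I would first try this induction in the case with only fixed points. Periodic orbits then enter through the same argument applied to their Poincar\'e sections, where time is only reparametrized, which is why the conclusion is topological equivalence rather than conjugacy.

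Finally, because $\mcX$ is bounded and trajectories remain inside it, all constructions take place over a compact region. The $C^1$-smallness threshold can therefore be chosen uniformly, and this yields the statement. In the paper itself I would simply cite \citep{palis1970ss} and record that only the openness (Steps~1--2) and the existence of $h$ are used downstream.
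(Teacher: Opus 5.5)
Your Steps 1--3 are the classical Palis--Smale argument: continuation of the critical elements plus a filtration to exclude new non-wandering points, then openness of transversality, then compatible unstable tubular families glued via the $\lambda$-lemma. The paper gives no proof beyond citing \citep{palis1970ss}, so as a sketch of the cited proof that part is fine.

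The gap is in your last paragraph, where you transfer the theorem to the paper's domain. ``$\mcX$ is bounded and trajectories remain inside it'' does not give a compact region with uniform margins, and under only these hypotheses the statement is false. On the open unit disk, take $f$ with $\dot r = -r(1-r^2)^2$, $\dot\theta = 1$ in polar coordinates. The origin is a hyperbolic sink (eigenvalues $-1\pm i$) and every other orbit spirals into it, so $f$ is Morse--Smale (indeed in $\mcF_{\mathrm{FP}}$). The field $g = f + \epsilon(1-\|x\|^2)x$ satisfies $\|g-f\|_{C^1} \le 3\epsilon$ and has $\dot r = -r(1-r^2)(1-r^2-\epsilon)$. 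Its orbits still never leave $\mcX$, because the circle $r=1$ is invariant. But $g$ has a hyperbolic repelling periodic orbit at $r=\sqrt{1-\epsilon}$, so it is not topologically equivalent to $f$. This is exactly where your Step 1 loses uniformity: $V=r^2$ decreases along $f$, but $\dot V = -2r^2(1-r^2)^2 \to 0$ at $\partial\mcX$. The non-compact set $\mcX\setminus U$ is where the new orbit is born.

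The missing hypothesis is the one the paper builds into Definition~\ref{def:structural_stability} and assumes in every main theorem. You need $f$ to be $C^1$ up to the smooth boundary of the compact set $\overline{\mcX}$ and strictly inward-pointing (Definition~\ref{def:inward_pointing}). Then:
\begin{itemize}
\item $\overline{\mcX}$ is forward-invariant for every $g$ with $\|f-g\|_{C^0}<\gamma$ (Lemma~\ref{lem:forward_invariance}).
\item $\overline{\mcX}$ serves as the top level of your filtration, with a uniform descent margin on the compact set $\overline{\mcX}\setminus U$.
\item In Step 3, $\partial\mcX$ takes over the role of the sources, of which there may be none inside $\mcX$. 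Every orbit not lying in some $W^u(\sigma_i)$ reaches $\partial\mcX$ in finite backward time. So you must initialize $h$ on $\partial\mcX$ compatibly with the tubular families, or glue on a repelling collar and reduce to the closed-manifold case.
\end{itemize}

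Finally, your closing remark is inaccurate. Downstream the paper uses not just the existence of $h$ but $\|h-\mathrm{id}\|_{C^0}\to 0$ (Step~1 of Theorem~\ref{thm:uap_fp} and Lemma~\ref{lem:basin_error}), i.e., strong structural stability. The Palis--Smale construction does deliver this, but only once you check that the continuations, tubular families and fundamental domains of $g$ are $C^0$-close to those of $f$.
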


\begin{remark}[Morse-Smale systems]
Morse-Smale systems are ``generic well-behaved'' dynamics: all trajectories converge to finitely many stable fixed points or limit cycles, with no degenerate connections (homoclinic tangencies, heteroclinic cycles) that would cause bifurcations.
Condition (1) rules out chaotic attractors; condition (2) ensures structural stability.
Morse-Smale systems are not the only structurally stable class---Axiom A systems~\citep{smale1967differentiable} with strange attractors are also structurally stable---but our trajectory-tracking analysis does not extend to chaotic dynamics, where sensitive dependence makes trajectory-level identifiability fundamentally limited~\citep{gallo2026attractor} and data-driven Neural ODEs instead recover the attractor's statistics rather than individual trajectories~\citep{linot2022data,chesebro2026scientific}.
\end{remark}

\begin{theorem}[Stable Manifold Theorem~{\citep{chicone2006ode}}]\label{thm:stable_manifold}
Let $x^*$ be a hyperbolic fixed point of a $C^1$ vector field $f$ with $k$ stable eigenvalues (negative real part) and $n-k$ unstable eigenvalues.
Then:
\begin{enumerate}[nosep]
\item The \textbf{stable manifold} $W^s(x^*) = \{x : \varphi(t,x) \to x^* \text{ as } t \to +\infty\}$ is a $C^1$ embedded submanifold of dimension $k$.
\item The \textbf{unstable manifold} $W^u(x^*)$ is a $C^1$ embedded submanifold of dimension $n-k$.
\item Trajectories in $W^s(x^*)$ converge to $x^*$ exponentially.
\end{enumerate}
\end{theorem}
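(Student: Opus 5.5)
The argument runs through the Hartman--Grobman reduction. It splits the statement into a local part near $x^*$ and a globalization step along the flow, proving (1) and (3) first and then treating (2) as the time-reversed case of (1).

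\textbf{Local step.} Translate $x^*$ to the origin and write
\[
f(x) = Ax + g(x), \qquad A = Df(0), \qquad g(0)=0,\ Dg(0)=0 .
\]
Let $E^s \oplus E^u$ be the spectral splitting of $A$: $E^s$ is the $k$-dimensional generalized eigenspace for eigenvalues with negative real part, and $E^u$ the $(n-k)$-dimensional one for positive real part. Hyperbolicity (Definition~\ref{def:hyperbolic_fp}) makes this splitting exhaustive. It also gives an adapted norm, constants $\alpha>0$, and the estimates
\[
\|e^{At}|_{E^s}\| \le e^{-\alpha t} \quad (t \ge 0), \qquad \|e^{-At}|_{E^u}\| \le e^{-\alpha t} \quad (t \ge 0).
\]
Next, multiply $g$ by a bump function so that it vanishes outside a small ball $B_r$. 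Since $Dg(0)=0$, this makes $\Lip(g) \le \theta$ globally, with $\theta$ as small as we like.

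\textbf{Lyapunov--Perron construction.} For each $\xi \in E^s$, look for the bounded solution on $[0,\infty)$ with stable component $\xi$ as a fixed point of
\[
(\mathcal{T}x)(t) = e^{At}\xi + \int_0^t e^{A(t-s)}P_s\, g(x(s))\,ds - \int_t^\infty e^{A(t-s)}P_u\, g(x(s))\,ds ,
\]
where $P_s, P_u$ are the spectral projections. Work in the weighted space with norm $\sup_t e^{\beta t}\|x(t)\|$ for some $0<\beta<\alpha$. For $\theta$ small, $\mathcal{T}$ is a contraction on this space. Its fixed point $x_\xi$ decays like $e^{-\beta t}$, which yields part (3) once we restrict to the local manifold.

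Define $h(\xi) = P_u\, x_\xi(0)$. Then $W^s_{\mathrm{loc}} = \operatorname{graph}(h)$. To see this, note two things. A solution converging to $0$ stays in $B_r$ after some time, so it is bounded and satisfies the integral equation; hence it lies on the graph. Conversely, every point on the graph produces a solution that converges to $0$.

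\textbf{Main obstacle: regularity.} Showing that $h$ is $C^1$ with $h(0)=0$ and $Dh(0)=0$ is the delicate step. It is what gives an embedded $C^1$ manifold of dimension $k$ tangent to $E^s$. I would use the fiber-contraction (uniform contraction) theorem. Formally differentiate the fixed-point equation in $\xi$, which gives a linear equation for $Dx_\xi$. Then check that the coupled map is a fiber contraction in the same weighted space. The difficulty is that $\xi \mapsto g\circ x_\xi$ is only continuously differentiable as a map into a space with weaker exponential weight. The standard remedy is to run the derivative equation with a slightly smaller weight $\beta' < \beta$, which still keeps the contraction for small $\theta$.

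\textbf{Globalization and (2).} Set
\[
W^s(x^*) = \bigcup_{t \ge 0} \varphi_{-t}\bigl(W^s_{\mathrm{loc}}\bigr).
\]
This holds because any orbit converging to $x^*$ eventually enters $W^s_{\mathrm{loc}}$. Each time-$t$ map is a $C^1$ diffeomorphism onto its image, so it preserves $C^1$ injectively immersed structure. The union is then an embedded $C^1$ submanifold of dimension $k$: local charts near $x^*$ are carried by the flow, and embeddedness comes from the local graph description together with the fact that orbits on $W^s$ cannot accumulate back in a different way.

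The exponential convergence in (3) holds globally because every orbit in $W^s(x^*)$ enters $W^s_{\mathrm{loc}}$ in finite time. Finally, part (2) follows by applying everything above to the vector field $-f$. Its stable manifold is $W^u(x^*)$, which has dimension $n-k$.
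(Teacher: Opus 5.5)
Your local analysis is the standard Lyapunov--Perron route. The paper gives no proof of its own, only the citation to Chicone, so there is no argument of the paper's to compare it with. The cutoff nonlinearity, the $e^{\beta t}$-weighted contraction, $C^1$ regularity of $h$ by fiber contraction with a weaker weight for the derivative, and part (2) via $-f$ are all fine. Two minor points. First, the label ``Hartman--Grobman reduction'' is a misnomer: you never use a conjugacy, and a topological one could not give $C^1$ regularity. Second, to put an orbit that eventually stays in $B_r$ on $\operatorname{graph}(h)$, you need uniqueness among \emph{bounded} solutions. That holds because, with $g$ globally $\theta$-Lipschitz, $\mathcal{T}$ is also a contraction in the unweighted sup norm.

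The genuine gap is the globalization step. There you assert that $\bigcup_{t\ge 0}\varphi_{-t}(W^s_{\mathrm{loc}})$ is \emph{embedded} because orbits on $W^s$ ``cannot accumulate back in a different way.'' That is false for a general $C^1$ field, and so is the word ``embedded'' in items 1--2 of the statement. Take $\dot x = y$, $\dot y = x - x^2$. The origin is a hyperbolic saddle with eigenvalues $\pm 1$. The level set $y^2/2 - x^2/2 + x^3/3 = 0$ contains a homoclinic orbit $\Gamma$ that leaves $0$ tangent to $E^u$ and returns tangent to $E^s$. Since $\Gamma \subset W^s(0)$, every small ball about $0$ meets $W^s(0)$ in the local stable segment \emph{plus} the outgoing arc of $\Gamma$. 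That is a three-armed set, not a $1$-manifold in the subspace topology. Intrinsically, those outgoing points lie arbitrarily far from $0$ along the immersed line.

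What your argument actually proves, and what is true in general, is weaker. $W^s_{\mathrm{loc}}$ is an embedded $C^1$ disc tangent to $E^s$ at $x^*$. $W^s(x^*) = \bigcup_{m\in\mathbb{N}}\varphi_{-m}(W^s_{\mathrm{loc}})$ is only an injectively immersed $C^1$ copy of $\mathbb{R}^k$, and likewise for $W^u$. Global embeddedness needs an extra hypothesis that stops $W^s(x^*)$ from re-entering every neighborhood of $x^*$ along orbits that first leave it. One such hypothesis is an energy function strictly decreasing off the non-wandering set, which Morse--Smale flows admit. Under it, a limit of the exit points of such returning orbits would lie on $W^u(x^*)\setminus\{x^*\}$, where the energy is strictly below its value at $x^*$. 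Orbits converging to $x^*$ never drop below that value, which is a contradiction. For the paper this costs nothing. It invokes the theorem to conclude that saddle stable manifolds are Lebesgue-null, and a countable union of $C^1$ images of a $k$-disc with $k<n$ already is.
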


\begin{remark}
For saddle points ($0 < k < n$), the stable manifold has dimension $< n$, hence Lebesgue measure zero.
Since basin boundaries in Morse-Smale systems are finite unions of stable manifolds of saddles, they have measure zero---a key fact for controlling B-type error.
\end{remark}

\begin{definition}[Normally hyperbolic invariant manifold]\label{def:nhim}
A compact invariant manifold $\mcM$ is \textbf{normally hyperbolic} if the linearized flow contracts/expands transverse to $\mcM$ at exponential rates dominating any expansion/contraction tangent to $\mcM$~\citep{fenichel1971persistence}.
\end{definition}

\begin{definition}[Strictly inward-pointing]\label{def:inward_pointing}
Let $\mcX \subset \reals^n$ have smooth boundary $\partial \mcX$ with outward unit normal $\nu(x)$.
A vector field $f$ is \textbf{strictly inward-pointing} at $\partial \mcX$ if there exists $\gamma > 0$ such that $f(x) \cdot \nu(x) < -\gamma$ for all $x \in \partial \mcX$.
\end{definition}

\begin{remark}
The strictly inward-pointing condition ensures that any $C^1$-close approximation $\hat{f}$ with $\|f - \hat{f}\|_{C^0} < \gamma$ automatically satisfies $\hat{f}(x) \cdot \nu(x) < 0$ on $\partial \mcX$, so trajectories of $\hat{f}$ remain in $\mcX$ (Lemma~\ref{lem:forward_invariance}).
This can always be achieved by choosing $\mcX$ appropriately (e.g., a sublevel set of a Lyapunov-like function).
\end{remark}

%==============================================================================
\section{Approximation Metric}\label{sec:metric}

\begin{SCfigure}[10][t!bhp]
\centering
\includegraphics[width=0.3\textwidth]{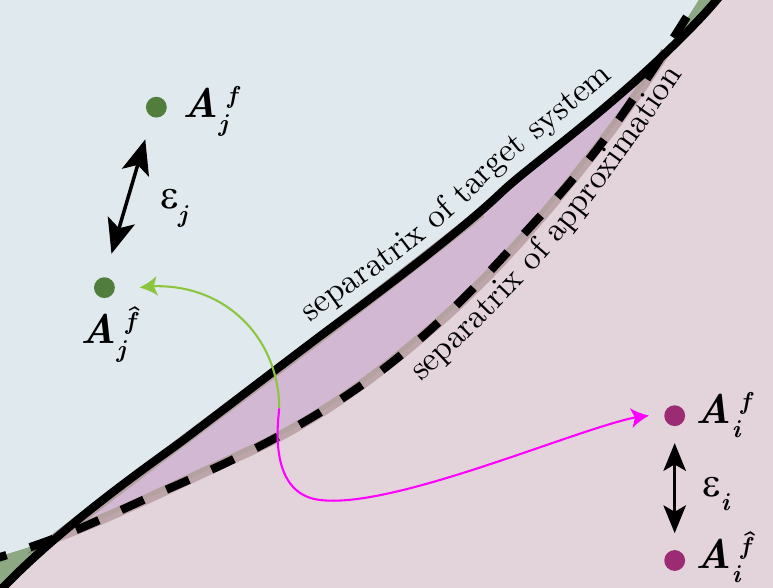}
\caption{Basin mismatch (B-type error) in a bistable system.
The target has stable fixed points $A_i^f$ and $A_j^f$ separated by a separatrix (solid).
The approximation has nearby fixed points with a shifted separatrix (dashed).
Initial conditions in the shaded region converge to different attractors in target vs.\ approximation---controlled by the $\delta$ parameter.
Near separatrices, perfect trajectory tracking is impossible; the $\delta$ term captures this unavoidable failure region.
}
\label{fig:separatrices}
\end{SCfigure}

Standard approximation metrics (uniform norm, $L^p$ norms) are unsuitable for infinite-time dynamical systems: near separatrices, trajectories can diverge to different attractors regardless of vector field closeness.
We adopt an $\varepsilon$-$\delta$ framework that decouples precision from reliability.

A distinct class of approximation errors, which we term \textit{basin errors}, arises when the target and the approximation assign the same initial condition $x_0$ to different attractors (i.e., $x_0 \in \boa(A_i^f)$ but $x_0 \in \boa(A_j^{\hat{f}})$ for $i \neq j$). 
For such points, the trajectory error does not remain bounded by a small $\varepsilon$ but converges to the distance between the two distinct attractors $\|A_i - A_j\|$, see Fig.~\ref{fig:separatrices}.
Our metric accepts this inherent limitation via the $\delta$ parameter.

\begin{definition}[$\varepsilon$-volume error]\label{def:eps_volume_error}
For flows $\varphi, \hat{\varphi}: \reals_{\geq 0} \times \mcX \to \mcX$, the \textbf{$\varepsilon$-volume error} is:
\begin{equation}\label{eq:eps_volume_error}
\|\varphi - \hat{\varphi}\|_\varepsilon \coloneqq \frac{1}{\vol(\mcX)} \int_{\mcX} \mathds{1}\left[ \sup_{t \geq 0} \|\varphi(t, x_0) - \hat{\varphi}(t, x_0)\| \geq \varepsilon \right] dx_0.
\end{equation}
\end{definition}

\begin{definition}[$\varepsilon$-$\delta$ closeness]\label{def:eps_delta_close}
Flows $\varphi$ and $\hat{\varphi}$ are \textbf{$\varepsilon$-$\delta$ close} if $\|\varphi - \hat{\varphi}\|_\varepsilon < \delta$.
\end{definition}

\begin{remark}
The $\varepsilon$-$\delta$ condition decouples:
\begin{itemize}[leftmargin=*, nosep, before=\vspace{-0.3em}]
\item \textbf{Precision} ($\varepsilon$): trajectory accuracy for the majority of initial conditions.
\item \textbf{Reliability} ($\delta$): fraction of initial conditions with B-type error (basin mismatch).
\end{itemize}
\end{remark}

This $(\varepsilon, \delta)$-closeness is more than a practical heuristic; it induces a rigorous topology on the space of dynamical systems. 
Specifically, it corresponds to the topology of \textit{convergence in measure} with respect to the trajectory supremum norm, which is metrizable via the Ky Fan metric. 
We provide the topological proofs and formal basis properties in App.~\ref{sec:approximation_topology}.

%==============================================================================
\section{Main Results}\label{sec:main}

\subsection{Target Classes}

\begin{definition}[Target class $\mcF_{\mathrm{FP}}$]\label{def:target_fp}
$\mcF_{\mathrm{FP}} \subset \mathfrak{X}^1(\mcX)$ is the set of Morse-Smale vector fields whose $\omega$-limit sets (Definition~\ref{def:omega_limit}) consist solely of hyperbolic fixed points.
\end{definition}

\begin{definition}[Target class $\mcF_{\mathrm{LC}}$]\label{def:target_lc}
$\mcF_{\mathrm{LC}} \subset \mathfrak{X}^1(\mcX)$ is the set of Morse-Smale vector fields with at least one hyperbolic limit cycle attractor (Definition~\ref{def:attractor}).
\end{definition}

\begin{definition}[Isochronous manifold]\label{def:isochronous}
A normally hyperbolic invariant manifold $\mcM$ foliated by periodic orbits $\{\gamma_\theta\}_{\theta \in \Theta}$ is \textbf{isochronous} if all orbits have the same period: $T(\gamma_\theta) = T$ for all $\theta \in \Theta$.
\end{definition}

\begin{definition}[Target class $\mcF_{\mathrm{CA}}$]\label{def:target_ca}
$\mcF_{\mathrm{CA}} \subset \mathfrak{X}^1(\mcX)$ is the set of vector fields whose $\omega$-limit sets are: (i) hyperbolic fixed points, (ii) hyperbolic periodic orbits, (iii) normally hyperbolic manifolds of fixed points, or (iv) isochronous normally hyperbolic manifolds of limit cycles (Definition~\ref{def:isochronous}).
\end{definition}

\begin{remark}[Isochrony requirement]
Condition (iv) prevents P-type error from compounding D-type error. Tiling an oscillatory manifold by discrete limit cycles makes each tile inherit its local period, so a frequency gradient (periods varying along the attractor) leaves neighboring tiles mismatched and drives unbounded phase drift as $t \to \infty$; isochrony lets every tile be retuned to a common period through the correction of Theorem~\ref{thm:uap_lc}. For example, a cylinder $S^1 \times [0,1]$ whose orbit at height $z$ has period $T(z)$ is isochronous only when $T(z)$ is constant. %, and a nonconstant $T(z)$ defeats the tiling.
\end{remark}

\subsection{Universal Approximation for Fixed Points}

\begin{theorem}[UAP for $\mcF_{\mathrm{FP}}$]\label{thm:uap_fp}
Let $f \in \mcF_{\mathrm{FP}}$ with flow $\varphi: \reals_{\geq 0} \times \mcX \to \mcX$, and let $\hat{\mcF}$ be a hypothesis class with the $C^1$ universal approximation property (Definition~\ref{def:hypothesis_class}).
Assume $f$ is strictly inward-pointing at $\partial \mcX$ (Definition~\ref{def:inward_pointing}).

For all $\varepsilon > 0$ and all $\delta > 0$, there exists $\hat{f} \in \hat{\mcF}$ with flow $\hat{\varphi}$ such that:
\(
\|\varphi - \hat{\varphi}\|_\varepsilon < \delta.
\)
\end{theorem}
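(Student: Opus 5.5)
The plan is to split $\mcX$ into a small ``bad'' set near the basin boundaries and a ``good'' set of initial conditions. For good points, trajectories of $f$ and $\hat f$ have a finite-time transient phase controlled by Gr\"onwall, and after that they are trapped near the same hyperbolic sink, where contraction keeps them close forever.

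\textbf{Step 1 (structure of $f$).} Let $p_1,\dots,p_m$ be the hyperbolic sinks of $f$ and $s_1,\dots,s_r$ the saddles and sources. By Morse-Smale and inward-pointing, every $x_0\in\overline{\mcX}$ converges to some fixed point. The set $\Sigma$ of points not converging to a sink is a finite union of stable manifolds of non-sinks. By Theorem~\ref{thm:stable_manifold} and the remark following it, $\Sigma$ has measure zero, and it is closed as the complement of the union of the open basins.

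Choose an open neighborhood $U\supset\Sigma$ with $\vol(U)<\delta\vol(\mcX)$, which is possible by outer regularity. The set $K=\overline{\mcX}\setminus U$ is compact and contained in the union of the open basins $\boa(p_i)$.

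\textbf{Step 2 (local trapping).} Each sink has a Lyapunov function $V_i$ from the linearization, for instance a quadratic form solving the Lyapunov equation. For small $r$, take a sublevel-set ball $B_i$ of radius $r\le\varepsilon/4$ with $f\cdot\nabla V_i<-c\|x-p_i\|^2$ on it. For $\hat f$ with $\|f-\hat f\|_{C^1}<\eta$ and $\eta$ small, the following hold:
\begin{enumerate}[nosep]
\item $\hat f$ has a unique hyperbolic sink $\hat p_i$ in $B_i$ with $\|\hat p_i-p_i\|=O(\eta)$, by the implicit function theorem.
\item The sets $B_i$ and $\tfrac12 B_i$ are forward invariant for $\hat f$ as well as for $f$.
\end{enumerate}
Consequently, once both trajectories lie in $B_i$, they stay within distance $2r\le\varepsilon/2$ for all later times.

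\textbf{Step 3 (uniform entrance time).} By compactness of $K$ and continuity of the flow, there is a finite $T$ such that $\varphi(T,x_0)\in\tfrac12 B_{i(x_0)}$ for all $x_0\in K$. Each $x_0$ has an open neighborhood with this property for a single finite time, since $\tfrac12 B_i$ is invariant, and a finite subcover gives $T$. By Gr\"onwall, $\sup_{t\le T}\|\varphi(t,x_0)-\hat\varphi(t,x_0)\|\le \eta T e^{LT}$, where $L=\Lip(f)$. Forward invariance of $\mcX$ under $\hat f$ follows from Lemma~\ref{lem:forward_invariance} once $\eta<\gamma$.

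Choose $\eta$ so that $\eta Te^{LT}<\min(\varepsilon/2,\ \text{dist}(\tfrac12B_i,\partial B_i))$. Then $\hat\varphi(T,x_0)\in B_i$, and Step 2 gives error $<\varepsilon$ for all $t\ge T$. Finally, the $C^1$ UAP (Definition~\ref{def:hypothesis_class}) supplies $\hat f\in\hat\mcF$ with this $\eta$.

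\textbf{Main obstacle.} The main obstacle is Step 3's uniform entrance time. One must verify that removing the open neighborhood $U$ of the separatrices leaves a compact set on which convergence to the sinks is uniform, which requires $\Sigma$ to be exactly closed. One must also verify that the order of choices is non-circular: $U$ and then $T$ depend only on $f$, $\varepsilon$ and $\delta$, and only afterwards is $\eta$ chosen. Structural stability (Theorem~\ref{thm:palis_smale}) is not strictly needed in this argument; the local hyperbolic persistence of Step 2 suffices.
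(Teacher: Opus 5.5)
Your proof is correct, and it is organized differently from the paper's. The paper handles the two error sources with separate lemmas.

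\begin{itemize}
\item Basin mismatch is confined to a tube $N_\eta(S)$ around the separatrices. This uses Palis--Smale structural stability with a conjugacy $h$ that is $C^0$-close to the identity (Lemma~\ref{lem:basin_error}).
\item On $\mathcal{X}\setminus N_\eta(S)$, the trajectory error is controlled by Gr\"onwall up to a uniform time $T^*$ of entry into a \emph{fixed} linearization ball. A variation-of-constants estimate with $\|D\varphi_{s,t}\|\le Ce^{-\lambda(t-s)}$ then gives $\sup_{t}\|\varphi-\hat\varphi\|\le K\nu/\lambda$ (Lemma~\ref{lem:traj_bound}).
\end{itemize}

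You instead get basin agreement as a by-product of the trajectory argument. Uniform entry of the compact set $K$ into $\tfrac12 B_i$, Gr\"onwall up to that time, and forward invariance of $B_i$ under $\hat f$ together force both trajectories into one common trap of diameter at most $\varepsilon/2$.

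This buys you three things.
\begin{itemize}
\item You need no global conjugacy.
\item You never use the transversality half of the Morse--Smale hypothesis. You only use that almost every initial condition converges to a hyperbolic sink.
\item Your order of choices ($r$ from $\varepsilon$, then $U$ from $\delta$, then $T$, then $\eta$) is explicit and visibly non-circular.
\end{itemize}
In fact only the forward invariance of $B_i$ under $\hat f$ is used. The existence of $\hat p_i$ in item 1 of your Step 2 plays no role.

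What the paper's contraction estimate buys is quantitative. Its linearization radius does not depend on $\varepsilon$, so the required field accuracy is linear in $\varepsilon$. This matches its scaling remark and the budget experiments. In your scheme the trap itself shrinks with $\varepsilon$. The entry time therefore grows roughly like $\lambda^{-1}\log(1/\varepsilon)$, and the Gr\"onwall factor $e^{LT}$ forces $\eta$ to scale like a higher power of $\varepsilon$ (about $\varepsilon^{1+L/\lambda}$). That is harmless for the existence statement but weaker as a rate.
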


\begin{proof}[Proof Sketch]
The proof proceeds in three steps (full proof in App.~\ref{app:proof_fp}):

\textbf{Step 1: Structural stability radius.}
By Theorem~\ref{thm:palis_smale}, there exists $\eta_0 > 0$ such that $\|f - \hat{f}\|_{C^1} < \eta_0$ implies topological equivalence via homeomorphism $h$ with $\|h - \mathrm{id}\|_{C^0} \to 0$ as $\eta_0 \to 0$.

\textbf{Step 2: Basin error control.}
Separatrices $S$ have measure zero (codim-$\geq 1$ saddle stable manifolds), so $\mu(N_\eta(S)) \to 0$ as $\eta \to 0$. Pick $\eta > 0$ with $\mu(N_\eta(S)) < \delta$; by Lemma~\ref{lem:basin_error}, $\mcE_{\mathrm{basin}} \subset N_\eta(S)$ for $\|f-\hat f\|_{C^1}$ small, so $\mu(\mcE_{\mathrm{basin}}) < \delta$.

\textbf{Step 3: Trajectory bounds in valid set.}
Define $\mcV \coloneqq \mcX \setminus N_\eta(S)$, depending on $f, \eta$ alone. By Lemma~\ref{lem:traj_bound}, $T^*(\eta) < \infty$ and the constants $K_1, K$ are determined a priori. For $x_0 \in \mcV$, with $D\varphi_{s,t}$ the flow Jacobian:
\[
\|\varphi(t, x_0) - \hat{\varphi}(t, x_0)\| \leq \int_0^t \|D\varphi_{s,t}\| \cdot \|f - \hat{f}\|_{C^0} \, ds \leq \frac{K\nu}{\lambda},
\]
where $\lambda > 0$ is the slowest contraction rate (smallest $|\mathrm{Re}(\mu)|$ for stable eigenvalues $\mu$ at attractors) and $K \geq 1$ bounds transient growth before trajectories enter the linearization regime.
The exponential contraction near hyperbolic attractors ensures the integral converges, yielding a uniform bound for all $t \in [0, \infty)$.

By UAP, choose $\hat{f} \in \hat{\mcF}$ with $\|f - \hat{f}\|_{C^1} < \min(\eta, \lambda\varepsilon/K)$ to achieve $\|\varphi - \hat{\varphi}\|_\varepsilon < \delta$.
\end{proof}

\begin{remark}[Interpretation]
Theorem~\ref{thm:uap_fp} states that multistable systems with multiple stable equilibria can be approximated with arbitrary precision ($\varepsilon$) and reliability ($1 - \delta$) over infinite time.
The key insight is that structural stability confines basin errors to a thin layer near separatrices, and exponential contraction near attractors bounds trajectory error uniformly in time.
\end{remark}

\begin{remark}[Basin error scaling]\label{rem:basin_scaling}
For Morse-Smale systems with codimension-1 separatrices, the basin error volume scales linearly with the $C^1$ perturbation: $\mu(\mcE_{\mathrm{basin}}) \leq C_S \cdot \|f - \hat{f}\|_{C^1}$, where $C_S$ depends on the surface area of the separatrices and the transversality of the flow.
This allows choosing $\eta_{\mathrm{vol}} = \delta / C_S$ to satisfy the reliability constraint.
\end{remark}

\begin{remark}[Quantitative scaling]
The required $C^1$ approximation accuracy $\eta_0$ scales as:
\(
\eta_0 = O\!\left(\min\left\{\delta, {\lambda / \varepsilon}{K}\right\}\right),
\)
where $\lambda$ is the slowest contraction rate at attractors, and $K \geq 1$ bounds transient growth.
\end{remark}

\begin{example}[1D bistable system]\label{ex:bistable}
Consider the double-well system $\dot{x} = f(x) = x - x^3$ on $\mcX = [-2, 2]$ (Figure~\ref{fig:bistable}).
This has stable fixed points at $x_\pm = \pm 1$ and an unstable fixed point (separatrix) at $x_0 = 0$.
The basins are $\boa(x_-) = [-2, 0)$ and $\boa(x_+) = (0, 2]$.
\textbf{Parameters:}
The Jacobian $f'(x) = 1 - 3x^2$ gives $f'(\pm 1) = -2$, so $\lambda = 2$.
The Lipschitz constant on $\mcX$ is $L = \max|f'| = \max|1 - 3x^2| = 11$ (at $x = \pm 2$).

\begin{figure}[t]
\centering
% Panel (a) is boxed, then all three panels are laid out in a single tikzpicture
% so their top edges share one line; each label is tucked inside the top-left.
\sbox0{\begin{tikzpicture}[scale=0.5]
  % Vector field plot
  \begin{scope}
    \draw[->] (-2.3,0) -- (2.5,0) node[above] {$x$};
    \draw[->] (0,-1.2) -- (0,1.9) node[above] {$\dot{x}$};
    % Perturbed vector field (transparent)
    \draw[thick, orange, opacity=0.6, domain=-1.5:1.5, samples=50] plot (\x, {\x - \x*\x*\x - 0.15*cos(2*\x r)});
    % True vector field
    \draw[thick, blue, domain=-1.5:1.5, samples=50] plot (\x, {\x - \x*\x*\x});
    \fill[red] (-1,0) circle (2pt);
    \fill[red] (1,0) circle (2pt);
    \draw[red, fill=white, thick] (0,0) circle (2pt);
    \node[below] at (-1,-0.25) {\scriptsize $x_-$};
    \node[below] at (1.3,0.5) {\scriptsize $x_+$};
    \node[below] at (0,-0.25) {\scriptsize $x_0$};
  \end{scope}
  % Phase line below
  \begin{scope}[yshift=-2.0cm]
    \draw[thick] (-2.2,0) -- (2.2,0);
    \fill[blue!15] (-2.2,-0.12) rectangle (0,0.12);
    \fill[red!15] (0,-0.12) rectangle (2.2,0.12);
    \foreach \x in {-1.5,0.5} {
      \draw[-{Stealth[length=1.5mm]}, thick] (\x,0) -- (\x+0.2,0);
    }
    \foreach \x in {-0.5,1.5} {
      \draw[-{Stealth[length=1.5mm]}, thick] (\x,0) -- (\x-0.2,0);
    }
    \fill[red] (-1,0) circle (2.5pt);
    \fill[red] (1,0) circle (2.5pt);
    \draw[red, fill=white, thick] (0,0) circle (2.5pt);
    \draw[dashed, orange, thick] (0.15,-0.25) -- (0.15,0.25);
  \end{scope}
\end{tikzpicture}}%
\begin{tikzpicture}[inner sep=0]
  \node[anchor=north west] (a) at (0,0) {\makebox[0.19\textwidth]{\usebox0}};
  \node[anchor=north west, xshift=5.5mm] (b) at (a.north east) {\includegraphics[width=0.5\textwidth]{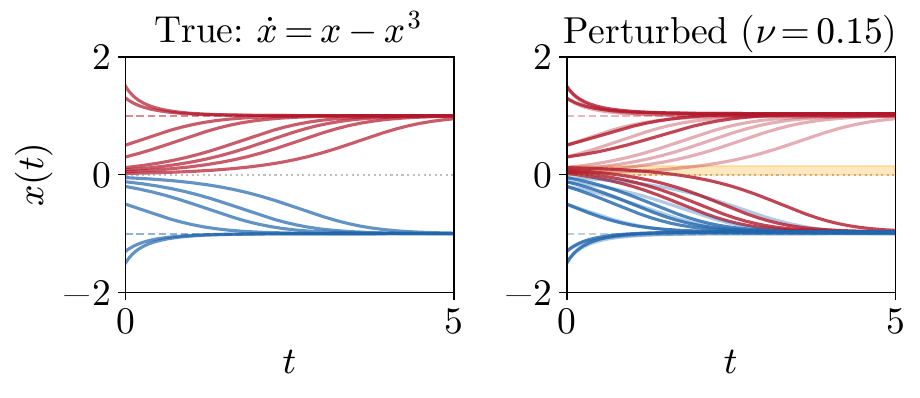}};
  \node[anchor=north west, xshift=5.5mm] (c) at (b.north east) {\includegraphics[width=0.23\textwidth]{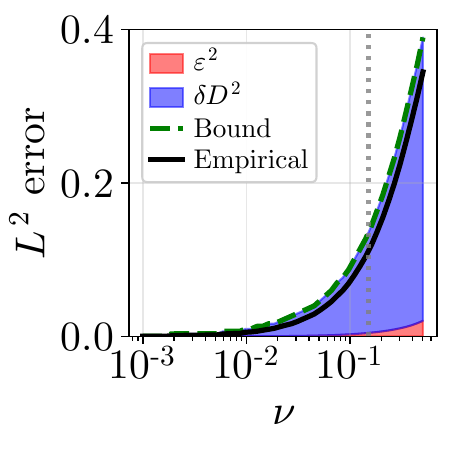}};
  % labels in the top-left corner; (C) nudged left so it clears the y-ticks
  \foreach \n/\L/\dx in {a/A/0pt, b/B/0pt, c/C/-2.2mm}
    \node[anchor=north west, font=\small\bfseries, inner sep=1.5pt,
          fill=white, fill opacity=0.72, text opacity=1, rounded corners=0.5pt]
      at ([xshift=\dx]\n.north west) {(\L)};
\end{tikzpicture}
\caption{The 1D bistable system (Example~\ref{ex:bistable}).
\textbf{(A)} Vector field $f(x) = x - x^3$ with stable $x_\pm = \pm 1$ (filled) and unstable $x_0 = 0$ (open); phase line shows basins with perturbed separatrix (dashed orange).
\textbf{(B)} Trajectories from various initial conditions; orange region marks ICs that switch basins under perturbation $\hat{f} = f - \nu\cos(2x)$.
\textbf{(C)} Time-averaged $L^2$ error vs.\ perturbation size $\nu$: empirical error (black) stays below bound $\varepsilon^2 + \delta D^2$ (dashed, Theorem~\ref{thm:lp_bound}), decomposed into trajectory error (red) and B-type basin error (blue).}
\label{fig:bistable}
% Experimental details:
% - Domain: x0 in [-1.8, 1.8], 1000 initial conditions
% - Simulation time: T=10 (20 time constants, tau=0.5 from f'(±1)=-2)
% - Perturbation: nu in [0.001, 0.5] log-spaced, 50 values
% - Basin diameter D=1.8 for L^p bound
% - Reference nu=0.15 used in trajectory panel (b)
\end{figure}

\textbf{Approximation:}
Let $\hat{f}(x) = x - x^3 + \nu g(x)$ for some bounded perturbation $g$ with $\|g\|_{C^1} \leq 1$.
The perturbed separatrix shifts by $O(\nu)$: if $\hat{x}_0$ satisfies $\hat{f}(\hat{x}_0) = 0$, then $|\hat{x}_0| \leq C\nu$ for small $\nu$.

\textbf{Error bounds:}
For the valid set $\mcV = \mcX \setminus (-\eta, \eta)$ with $\eta > C\nu$: 1. basin error: $\mu(\mcE_{\mathrm{basin}}) \leq 2\eta$ (the strip around $x=0$) and  2. trajectory error: $\sup_t \|\varphi - \hat{\varphi}\| \leq \nu \left( \frac{e^{LT^*} - 1}{L} + \frac{C}{\lambda} \right)$ (cf.\ Lemma~\ref{lem:traj_bound}, with $C \approx 1$ in 1D).
Choosing $\eta = \delta/2$ and $\nu = \min(\eta/C, \lambda\varepsilon/K)$ achieves $\varepsilon$-$\delta$ closeness.
\end{example}

\subsection{Universal Approximation for Limit Cycles}

\begin{theorem}[UAP for $\mcF_{\mathrm{LC}}$]\label{thm:uap_lc}
Let $f \in \mcF_{\mathrm{LC}}$ with hyperbolic limit cycles $\gamma_1, \ldots, \gamma_k$ having periods $T_1, \ldots, T_k > 0$.
Assume $f$ is strictly inward-pointing at $\partial \mcX$.
Let $\hat{\mcF}$ be a hypothesis class with the $C^1$ universal approximation property satisfying:
\begin{itemize}[nosep]
\item For $k = 1$: $\hat{\mcF}$ is closed under scalar multiplication (always satisfied by neural networks).
\item For $k \in \mathbb{N}$: the cycles are well-separated ($\min_{i \neq j} \dist(\gamma_i, \gamma_j) > 0$). The localized period correction is then realized under the $C^1$ UAP alone (Remark~\ref{rem:bump_support}).
\end{itemize}

For all $\varepsilon > 0$ and all $\delta > 0$, there exists $\hat{f} \in \hat{\mcF}$ with flow $\hat{\varphi}$ such that:
\(
\|\varphi - \hat{\varphi}\|_\varepsilon < \delta.
\)
\end{theorem}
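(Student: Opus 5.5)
The plan is to follow the three-step scheme of Theorem~\ref{thm:uap_fp} and insert one extra step that removes P-type error. First take a $C^1$ approximant $\tilde f$ with $\|f-\tilde f\|_{C^1}<\eta$, which the UAP provides. By Theorem~\ref{thm:palis_smale}, for $\eta$ small $\tilde f$ is topologically equivalent to $f$. Hyperbolic periodic orbits persist under $C^1$ perturbation, so by the implicit function theorem applied to the Poincar\'e map (Definition~\ref{def:poincare_map}) $\tilde f$ has cycles $\tilde\gamma_i$ that are $O(\eta)$-close to $\gamma_i$ in Hausdorff distance, with periods $\tilde T_i=T_i+O(\eta)$. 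Basin error is handled exactly as in Step~2 of Theorem~\ref{thm:uap_fp}. The boundaries between basins are finite unions of stable manifolds of saddle fixed points and saddle cycles, and these have measure zero. Choose a tubular neighborhood $N_{\eta'}(S)$ with $\mu(N_{\eta'}(S))<\delta$; Lemma~\ref{lem:basin_error} then confines mismatched basins to it once $\eta$ is small.

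\textbf{Period matching.} For $k=1$, set $\hat f=(\tilde T_1/T_1)\tilde f$, which lies in $\hat{\mcF}$ by closure under scalar multiplication. Rescaling time preserves all orbits and multiplies every period by $T_1/\tilde T_1$, so the cycle now has period exactly $T_1$. Since $|\tilde T_1/T_1-1|=O(\eta)$, we also get $\|\hat f-f\|_{C^1}=O(\eta)$.

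For $k\ge2$, use well-separatedness to choose disjoint neighborhoods $U_i\supset\gamma_i$ and a smooth function $\rho=1+\sum_i c_i\beta_i$, where each $\beta_i$ is a bump equal to $1$ near $\gamma_i$ and supported in $U_i$. The new period of the $i$-th cycle is $\tilde T_i/(1+c_i)$, so $c_i=\tilde T_i/T_i-1=O(\eta)$ is determined explicitly. The field $\rho\tilde f$ is then $O(\eta)$-close to $f$ in $C^1$, because the $\beta_i$ are fixed and $c_i\to0$. Positive rescaling preserves orbits, so the topological equivalence survives.

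To land back in $\hat{\mcF}$, approximate $\rho\tilde f$ once more in $C^1$ to accuracy $\eta_2\ll\eta$. This reintroduces a period error of order $\eta_2$. Fix it by composing with the exact scalar correction, or more robustly by a degree argument: the map $c\mapsto(\text{periods of }\rho_c\tilde f)$ is a $C^1$ local diffeomorphism near $0$ with Jacobian close to $\mathrm{diag}(-\tilde T_i)$. A small $C^1$ perturbation therefore still hits the target $(T_1,\dots,T_k)$ for some nearby $c$, using a continuously parametrized family of network approximants. This is the step I expect to be the main obstacle. The UAP gives approximants for each fixed $c$ but not automatically in a continuous family, so I would appeal to Remark~\ref{rem:bump_support} and build the approximant linearly as $\hat f_c=\hat g_0+\sum_i c_i\hat g_i$, where $\hat g_i$ approximates $\beta_i\tilde f$. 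Then $c\mapsto$ periods is continuous and Brouwer's theorem applies.

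\textbf{Trajectory bound.} On the valid set $\mcV=\mcX\setminus N_{\eta'}(S)$, every trajectory reaches a fixed-point attractor or a cycle attractor within a uniform time $T^*$. Before $T^*$, Gr\"onwall gives an error $\le \nu(e^{LT^*}-1)/L$ with $\nu=\|f-\hat f\|_{C^0}$. After $T^*$, trajectories near attracting fixed points are handled as in Lemma~\ref{lem:traj_bound}.

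Near a cycle $\gamma_i$, use asymptotic phase: $\varphi(t,x_0)$ approaches $\gamma_i(t+\theta(x_0))$ and $\hat\varphi(t,x_0)$ approaches $\hat\gamma_i(t+\hat\theta(x_0))$. Both converge exponentially at a rate set by the Floquet multipliers. Because the periods are equal, the phase difference $\theta-\hat\theta$ is constant in time rather than growing linearly. It is $O(\nu)$, since asymptotic phase depends continuously on the field in $C^1$, and $\|\gamma_i-\hat\gamma_i\|_{C^0}=O(\nu)$. Hence $\sup_{t\ge0}\|\varphi-\hat\varphi\|\le C\nu$ uniformly on $\mcV$. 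Choosing $\nu$ and $\eta$ small enough that $C\nu<\varepsilon$ and $\mu(N_{\eta'}(S))<\delta$ completes the proof.
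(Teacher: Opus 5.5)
Your skeleton matches the paper's: a base $C^1$ approximant $\tilde f$ with persistent cycles and $O(\eta)$ period error, the scalar rescaling $(\tilde T_1/T_1)\tilde f$ for $k=1$, and the localized multiplicative correction $(1+\sum_i c_i\beta_i)\tilde f$ with $c_i=\tilde T_i/T_i-1$ for $k\ge2$ (Lemma~\ref{lem:multi_period_correction}). To stay inside $\hat{\mcF}$ you use the linear family $\hat g_0+\sum_i c_i\hat g_i$, which is exactly the paper's additive correction $\tilde f+\sum_j\alpha_j\hat{\mathbf{\Phi}}_j$. Both versions tacitly need $\hat{\mcF}$ to be closed under finite linear combinations.

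You differ in how you solve for the coefficients. The paper differentiates the period map through the adjoint (phase-response) formula (Lemmas~\ref{lem:period_variation}--\ref{lem:robust_inv}). It then gets strict diagonal dominance under $\nu+(N-1)\zeta<1/C_Z$ and applies the inverse function theorem (Proposition~\ref{prop:correction_existence}). You instead compare against the explicit, exactly diagonal map $c\mapsto(\tilde T_i/(1+c_i))_i$ and use degree/Brouwer on a fixed ball around $c^*$. This needs only continuity of periods under $C^1$ perturbation, uniformly over a compact parameter ball, and not their differentiability or phase-response curves. It also makes the quantifier order clean: fix $\tilde f$, the bumps and the radius, then choose the network accuracy. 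The paper's route additionally gives uniqueness and the bound $\|\boldsymbol{\alpha}^*\|\le\kappa\|\Delta T\|$ that feeds its explicit budget. An existence proof doesn't need these, since you can make $|c-c^*|$ as small as you like.

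One step does not follow as written: the final claim $\sup_{t\ge0}\|\varphi-\hat\varphi\|\le C\nu$. Routing the triangle inequality through $\gamma_i(t+\theta)$ and $\hat\gamma_i(t+\hat\theta)$ leaves two attraction terms, $\|\varphi(t,x_0)-\gamma_i(t+\theta)\|$ and $\|\hat\varphi(t,x_0)-\hat\gamma_i(t+\hat\theta)\|$. At $t=T^*$ these are as large as the neighbourhood of the cycle, not $O(\nu)$, and they only decay exponentially. So you have bounded the $\limsup$, not the supremum that the $\varepsilon$-volume error requires; the paper's Lemma~\ref{lem:lc_traj_bound} has the same limitation.

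The fix is standard:
\begin{enumerate}
\item Choose an $\varepsilon$-dependent horizon $T_1\ge T^*$ after which both attraction terms are below $\varepsilon/3$. Make this uniform over $x_0\in\mcV$ and over all fields in a $C^1$ neighbourhood of $f$, which holds because Floquet rates and isochron neighbourhoods persist.
\item Apply Gr\"onwall on $[0,T_1]$.
\item Only then choose $\nu$.
\end{enumerate}
This gives $\sup_t\|\varphi-\hat\varphi\|<\varepsilon$, which is all the theorem needs, and it is what the paper does in Case~2 of Theorem~\ref{thm:uap_ca}. Two smaller points:
\begin{itemize}
\item Continuity of the asymptotic phase in the field gives $|\theta-\hat\theta|\to0$, not $O(\nu)$, but this suffices.
\item Comparing $\gamma_i(t+\theta)$ with $\hat\gamma_i(t+\hat\theta)$ requires base points on a common transversal. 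Equal periods and Gr\"onwall over one period then give $\sup_s\|\gamma_i(s)-\hat\gamma_i(s)\|\to0$.
\end{itemize}
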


\begin{proof}[Proof Sketch]
The key challenge is P-type error: even with $\|f - \hat{f}\|_{C^1}$ small, period mismatch causes unbounded phase drift.
We eliminate this via exact period matching (full proof in App.~\ref{app:proof_lc}).

\textbf{Step 1: Base approximation.}
Choose $\tilde{f} \in \hat{\mcF}$ with $\|f - \tilde{f}\|_{C^1} < \eta$.
By Fenichel's persistence theorem, $\tilde{f}$ has limit cycles $\tilde{\gamma}_i$ with periods $\tilde{T}_i$ satisfying $|\tilde{T}_i - T_i| \leq L_T \eta$.

\textbf{Step 2: Period correction.}
For a single cycle, global scaling $\hat{f} = c^* \tilde{f}$ with $c^* = \tilde{T}/T$ achieves exact period matching: the cycle $\hat{\gamma} = \tilde{\gamma}$ has period exactly $T$.

For multiple cycles with different correction factors, use localized bump functions $\psi_i$ supported in disjoint tubular neighborhoods $N_i$ of radius $r < \frac{1}{2}\min_{i \neq j}\dist(\gamma_i, \gamma_j)$, with $\psi_i = 1$ on $\tilde{\gamma}_i$.
Define $\hat{f} = (1 + \sum_i \alpha_i^* \psi_i) \tilde{f}$ where $\alpha_i^* = (\tilde{T}_i - T_i)/T_i$.
The separation condition ensures the supports are disjoint, so each cycle is corrected independently.

\textbf{Step 3: Infinite-time bound.}
With exact period matching, the transient phase offset accumulated during $[0, T^*]$ is \emph{frozen} for all $t > T^*$.
The asymptotic error is bounded by geometric proximity $d_H(\gamma, \hat{\gamma}) + O(\|f - \hat{f}\|_{C^1})$, which can be made $< \varepsilon$.
\end{proof}

\begin{remark}[Interpretation]
Theorem~\ref{thm:uap_lc} shows that limit cycle systems require a two-stage approximation: first $C^1$-approximate the vector field, then correct the period exactly.
Without period correction, even perfect $C^1$ approximation yields unbounded phase drift as $t \to \infty$ (P-type error).
The bump function construction localizes corrections to avoid interference between cycles.
\end{remark}

\begin{remark}[Localized period correction: realizability]\label{rem:bump_support}
For $k > 1$ limit cycles (or isochronous manifolds in Theorem~\ref{thm:uap_ca}), the period correction is localized to disjoint tubular neighborhoods $N_i$ of the cycles. It suffices that $\hat{\mcF}$ contain localized corrections $\hat{\mathbf{\Phi}}_i$ that are $C^1$-close to the ideal fields (equal to $\tilde{f}$ on $N_i$ and vanishing outside a slightly larger neighborhood): by Lemma~\ref{lem:bump_realizability} (Bump Function Realizability), the $C^1$ UAP provides such $\hat{\mathbf{\Phi}}_i \in \hat{\mcF}$ to any tolerance $\min(\nu, \zeta)$ in $C^1$ norm, giving on-cycle alignment below $\nu$ on $\tilde{\gamma}_i$ and off-cycle leakage below $\zeta$ on $\tilde{\gamma}_j$ for $j \neq i$. Whenever these tolerances satisfy
\(
  \nu + (N-1)\,\zeta < \frac{1}{C_Z}
\) 
with
\(
 C_Z = \max_i \sup_{t \in [0, \tilde{T}_i]} \|Z_i(t)\|
\)
(with $Z_i$ the adjoint, i.e.\ phase-response, solutions), the period Jacobian is strictly diagonally dominant, hence invertible by Lemma~\ref{lem:robust_inv} (Robust Invertibility), and the inverse function theorem yields the exact period correction. Smooth-activation networks meet the condition with sufficient width; compactly supported activations (windowed ReLU, RBF, B-spline) meet it exactly \citep{clevert2016elu,barron2017celu,elfwing2018sigmoid,hasan2023talu,duch1999survey,dubey2022activation,jagtap2023activation,ramachandran2017activation,hayou2019activation}.
\end{remark}

%\begin{remark}[Practical limitation: existence vs.\ learnability]\label{rem:existence_vs_learning}
%The period correction construction is an \textbf{existence proof}, not a training algorithm.
%It requires explicit computation of correction factors $\alpha_i^* = (\tilde{T}_i - T_i)/T_i$ from measured period errors, which cannot be achieved through standard gradient-based training on trajectory data.
%For fixed-point attractors ($\mcF_{\mathrm{FP}}$, $\mcF_{\mathrm{CA}}$ with fixed points), no such fine-tuning is required, and standard training suffices.
%For oscillatory systems ($\mcF_{\mathrm{LC}}$, $\mcF_{\mathrm{CA}}$ with limit cycles), infinite-time guarantees are likely unattainable via gradient-based training: any residual period error causes unbounded phase drift as $t \to \infty$.
%\end{remark}

\subsection{Universal Approximation for Continuous Attractors}

Continuous attractors are not structurally stable: generic perturbations collapse the manifold into discrete points, causing \textbf{D-type error} (Discretization)~\citep{Sagodi2024a}.
We address this via a tiling strategy that accepts bounded D-type error.

\begin{theorem}[UAP for $\mcF_{\mathrm{CA}}$]\label{thm:uap_ca}
Let $f \in \mcF_{\mathrm{CA}}$ with normally hyperbolic continuous attractor $\mcM$ (Definition~\ref{def:nhim}).
Assume $f$ is strictly inward-pointing at $\partial \mcX$.
Let $\hat{\mcF}$ be a hypothesis class with the $C^1$ universal approximation property satisfying: (1) if $\mcM$ consists of fixed points: UAP suffices; (2) if $\mcM$ is foliated by isochronous limit cycles: the localized period correction is realized under the $C^1$ UAP alone (Remark~\ref{rem:bump_support}); no separate condition on $\hat{\mcF}$ is needed.
Then, for all $\varepsilon > 0$ and all $\delta > 0$, there exists $\hat{f} \in \hat{\mcF}$ with flow $\hat{\varphi}$ such that:
\(
\|\varphi - \hat{\varphi}\|_\varepsilon < \delta.
\)
\end{theorem}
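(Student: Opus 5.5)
The plan is to reduce Theorem~\ref{thm:uap_ca} to Theorems~\ref{thm:uap_fp} and~\ref{thm:uap_lc} through an intermediate \emph{tiled} target $f_\varepsilon$. We build $f_\varepsilon$ to be Morse-Smale and close to $f$ in trajectory terms, then approximate $f_\varepsilon$ by a network.

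\textbf{Step 1: tiling the attractor.} Choose a finite $\varepsilon/3$-net $\{p_1,\dots,p_N\}\subset\mcM$. For a manifold of fixed points, use normal hyperbolicity (Definition~\ref{def:nhim}) to write $f$ in Fenichel coordinates $(u,v)$ on a tubular neighborhood $U$ of $\mcM$, with $u\in\mcM$ tangent and $v$ normal. On $\mcM$ we have $f|_\mcM=0$. Fix a gradient-like Morse function $g$ on $\mcM$ whose minima are exactly the $p_i$, with saddles placed between them, so every cell of its Morse--Smale decomposition has diameter $<\varepsilon/3$. Set
\[
f_\varepsilon = f - \kappa\,\chi(v)\,\nabla_{\mcM} g(u),
\]
where $\chi$ is a cutoff equal to $1$ near $v=0$ and $\kappa>0$ is small. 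Take the gradient pseudo-Riemannian so that transversality holds (Kupka--Smale). For small $\kappa$ the normal hyperbolicity dominates the $O(\kappa)$ tangential drift. Hence $f_\varepsilon$ restricted to the persistent invariant manifold $\mcM_\kappa$ is a Morse-Smale flow with sinks $p_i$, and outside $U$ it agrees with $f$. The result is that $f_\varepsilon$ is Morse-Smale with hyperbolic fixed points and cycles, so $f_\varepsilon\in\mcF_{\mathrm{FP}}\cup\mcF_{\mathrm{LC}}$.

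For an isochronous manifold of limit cycles of common period $T$, we do the same on the orbit space $\mcM/S^1$, since isochrony makes this a smooth manifold with a well-defined phase. This selects finitely many hyperbolic cycles $\gamma_i$ that are pairwise separated. Their periods differ from $T$ by $O(\kappa)$, and they are corrected back to exactly $T$ later.

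\textbf{Step 2: $f_\varepsilon$ versus $f$.} For $x_0$ outside a small neighborhood of the stable manifolds of the new saddles on $\mcM$ and of the old separatrices, which is a measure $<\delta/2$ set since these have positive codimension, the target trajectory converges to a point $q\in\mcM$. Its fibre $W^s(q)$ is $C^0$-close to the fibre for $f_\varepsilon$, so the $f_\varepsilon$-trajectory converges to the sink $p_i$ of the cell containing the projection of $q$, and $\|q-p_i\|<\varepsilon/3$. On the transient interval $[0,T^*]$, Gr\"onwall with $\kappa$ small gives error $<\varepsilon/3$. After $T^*$, both trajectories remain in the $\varepsilon/3$-cell of a common point, because the normal contraction is exponential and the tangential drift only moves within the cell. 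Altogether $\|\varphi-\varphi_\varepsilon\|_{2\varepsilon/3}<\delta/2$.

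For cycles, phases also must match. The target flow on $\mcM$ is isochronous with period $T$, and after the exact period correction so is the tiled cycle $\gamma_i$. The transient phase offset is therefore frozen, as in Step 3 of Theorem~\ref{thm:uap_lc}, and shrinks with $\kappa$.

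\textbf{Step 3: network approximation.} Apply Theorem~\ref{thm:uap_fp} to $f_\varepsilon$ in the fixed-point case, or Theorem~\ref{thm:uap_lc} in the cycle case, with tolerances $\varepsilon/3$ and $\delta/2$. In the cycle case, use the localized correction of Remark~\ref{rem:bump_support} to set every period equal to exactly $T$; the separation from Step 1 makes this valid. The inward-pointing property persists since $f_\varepsilon=f$ near $\partial\mcX$. The triangle inequality for the $\varepsilon$-volume error (the Ky Fan structure) then gives $\|\varphi-\hat\varphi\|_\varepsilon<\delta$.

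The main obstacle is Step 2: we need a time-uniform bound while $f$ is only neutrally stable along $\mcM$. The first thing to try is to use the Fenichel fibration to compare asymptotic phases $q$ and $p_i$ directly, instead of using Gr\"onwall. The remaining difficulty is that points whose $f$-limit lies near a cell boundary still fall into the correct cell, within $\varepsilon$, provided the cells are small.
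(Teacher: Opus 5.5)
Your route is the one in the paper's proof sketch: a tiled Morse--Smale proxy fed into Theorem~\ref{thm:uap_fp} or~\ref{thm:uap_lc}, with every oscillatory tile at the common period $T$. Gluing the legs $f\to f_\varepsilon\to\hat f$ with the triangle inequality for $\|\cdot\|_\varepsilon$ is a cleaner way to make that sketch rigorous than the appendix's direct decomposition. The slow $O(1/\kappa)$ in-manifold time scale never enters a Gr\"onwall estimate against $f$, and the cutoff $\chi(v)$ leaves $f$ untouched away from $\mcM$. The fixed-point case goes through essentially as you describe.

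The gap is the period bookkeeping in the cycle case. You say the proxy's tiles have periods $T+O(\kappa)$, and you retune them to $T$ only at the network level (Step~3, via Remark~\ref{rem:bump_support}). But both legs of your triangle inequality need the \emph{proxy} to have period exactly $T$ on its attracting tiles.
\begin{itemize}
\item If a tile of $f_\varepsilon$ had period $T_i\neq T$, then $\varphi(t,x_0)$ and $\varphi_\varepsilon(t,x_0)$ would drift apart in phase until their distance reaches the cycle diameter. This happens for essentially every $x_0$ in that tile's basin, so Step~2's bound $\|\varphi-\varphi_\varepsilon\|_{2\varepsilon/3}<\delta/2$ fails.
\item Theorem~\ref{thm:uap_lc} applied to $f_\varepsilon$ pins the network's periods to the $T_i$. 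Retuning them to $T$ therefore breaks the $f_\varepsilon$--$\hat f$ leg instead.
\end{itemize}
A correction applied only to $\hat f$ cannot repair the $f$--$f_\varepsilon$ leg.

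The repair is already contained in your construction. Because $g$ is pulled back from $\mcM/S^1$, $\nabla_{\mcM}(g\circ\pi)$ vanishes identically along every orbit over a critical point of $g$. Because the perturbation is tangent to $\mcM$ at $v=0$, $\mcM$ itself stays invariant. So the tile cycles are unchanged orbits of $f$ with period exactly $T$, and Theorem~\ref{thm:uap_lc} applied to $f_\varepsilon$ then targets $T$ automatically. Alternatively, rescale $f_\varepsilon$ near each tile as in Lemma~\ref{lem:period_correction} before invoking the theorem.

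Two further points need tightening.
\begin{itemize}
\item Average the metric on $\mcM$ over the flow of $f|_{\mcM}$ so that it is $S^1$-invariant. Then $f|_{\mcM}$ and $\nabla(g\circ\pi)$ commute, and the flow of $f_\varepsilon$ on $\mcM$ is the $S^1$-action composed with the flow $\Phi^{\mathrm{grad}}_t$ of $-\kappa\nabla(g\circ\pi)$. This gives hyperbolicity of the tiles: the in-manifold transverse multipliers are $e^{-\kappa T\mu_j}$, with $\mu_j$ the Hessian eigenvalues of $g$. Since the $S^1$-action is an isometry, it also gives the exact time-uniform comparison $\dist_{\mcM}(\varphi(t,q),\varphi_\varepsilon(t,q'))=\dist_{\mcM}(q,\Phi^{\mathrm{grad}}_t(q'))$.
\item Contrary to your Step~2, the part of the frozen offset accumulated during the slow settling onto a tile does not shrink with $\kappa$; only the Gr\"onwall transient part does. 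The settling part is bounded by $\dist_{\mcM}(q,q')$ plus the length of the gradient path from $q'$ to its tile. So the cell geometry, not $\kappa$, must keep it within your $\varepsilon$-budget.
\end{itemize}
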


\begin{proof}[Proof Sketch]
Continuous attractors are not structurally stable: generic perturbations collapse the continuum into discrete attractors (D-type error).
We use a \textbf{tiling strategy} (full proof in App.~\ref{app:proof_ca}).

\textbf{Manifolds of fixed points:}
Construct an intermediate system $f_{\mathrm{tile}}$ with discrete stable fixed points $\{x_1, \ldots, x_k\} \subset \mcM$ separated by saddles, with spacing $< \varepsilon/2$.
Since $f_{\mathrm{tile}} \in \mcF_{\mathrm{FP}}$, Theorem~\ref{thm:uap_fp} applies.
The maximum error is bounded by the tiling resolution.

\textbf{Isochronous manifolds of limit cycles:}
Tile the continuous attractor with a grid of discrete stable limit cycles $\{\gamma_1, \ldots, \gamma_N\}$ covering $\mcM$ with spacing $< \varepsilon/2$.
Apply Theorem~\ref{thm:uap_lc} with period correction. % to the common period $T$.
The isochrony condition ensures P-type error does not compound D-type error.
\end{proof}

\begin{remark}[Interpretation]
Theorem~\ref{thm:uap_ca} reveals that continuous attractors---while not structurally stable---can still be approximated via \emph{discretization}.
The tiling strategy accepts D-type error (collapse to discrete points) but bounds it by the tiling resolution.
The isochrony condition for oscillatory attractors prevents frequency gradients from causing position-dependent phase drift.
\end{remark}

%==============================================================================
\section{Temporal Generalization}\label{sec:temporal}

A key practical implication is that $\varepsilon$-$\delta$ closeness implies bounded $L^p$ error over infinite time, connecting topological guarantees to training metrics.

\begin{definition}[Time-averaged $L^p$ error]\label{def:lp_error}
For flows $\varphi, \hat{\varphi}$ on bounded domain $\mcX$:
\[
\mathcal{E}_{p,\infty}(\varphi, \hat{\varphi}) \coloneqq \frac{1}{\vol(\mcX)} \int_{\mcX} \left( \limsup_{T \to \infty} \frac{1}{T} \int_0^T \|\varphi(t, x_0) - \hat{\varphi}(t, x_0)\|^p \, dt \right) dx_0.
\]
\end{definition}

\begin{theorem}[$L^p$ error bound]\label{thm:lp_bound}
Let $\varphi, \hat{\varphi}: \reals_{\geq 0} \times \mcX \to \mcX$ be continuous flows on bounded domain $\mcX \subset \reals^n$ with diameter $D \coloneqq \sup_{x,y \in \mcX} \|x - y\| < \infty$.
If $\hat{\varphi}$ is $\varepsilon$-$\delta$ close to $\varphi$ (Definition~\ref{def:eps_delta_close}), then for all $p \geq 1$:
\(
\mathcal{E}_{p,\infty}(\varphi, \hat{\varphi}) \leq \varepsilon^p + \delta \cdot D^p.
\)
\end{theorem}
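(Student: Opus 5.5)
The plan is to split the domain $\mcX$ into a good set and a bad set according to the $\varepsilon$-volume error (Definition~\ref{def:eps_volume_error}), and to bound the time-averaged error pointwise on each piece before integrating. Let
\[
B \coloneqq \Bigl\{x_0 \in \mcX : \sup_{t\ge 0}\|\varphi(t,x_0)-\hat\varphi(t,x_0)\| \ge \varepsilon\Bigr\}, \qquad G \coloneqq \mcX\setminus B.
\]
By Definition~\ref{def:eps_delta_close}, $\vol(B)/\vol(\mcX) = \|\varphi-\hat\varphi\|_\varepsilon < \delta$.

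Before splitting, measurability needs a brief check. Because the flows are continuous, the supremum over $t \ge 0$ equals the supremum over rational $t$. Hence the map $x_0\mapsto \sup_t\|\cdot\|$ is a countable supremum of continuous functions, so it is lower semicontinuous and in particular measurable, and $B$ is Borel. Likewise, for each $T$ the integrand $x_0 \mapsto \frac1T\int_0^T\|\varphi-\hat\varphi\|^p\,dt$ is continuous in $x_0$. It suffices to take the $\limsup$ along rational $T$, because the averages are continuous in $T$ for $T>0$. Therefore the $\limsup$ is a measurable, nonnegative function of $x_0$, and the integral in Definition~\ref{def:lp_error} is well defined, possibly as $+\infty$ a priori.

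The pointwise bounds are as follows.
\begin{itemize}
\item For $x_0\in G$, we have $\|\varphi(t,x_0)-\hat\varphi(t,x_0)\|<\varepsilon$ for every $t$. So each time average is at most $\varepsilon^p$, and so is its $\limsup$.
\item For $x_0\in B$, both trajectories stay in $\mcX$, since the flows map into $\mcX$. Hence $\|\varphi(t,x_0)-\hat\varphi(t,x_0)\|\le D$, and the $\limsup$ of the averages is at most $D^p$.
\end{itemize}

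Integrating and normalizing by $\vol(\mcX)$ gives
\[
\mathcal{E}_{p,\infty} \le \frac{\vol(G)}{\vol(\mcX)}\,\varepsilon^p + \frac{\vol(B)}{\vol(\mcX)}\,D^p \le \varepsilon^p + \delta D^p,
\]
using $\vol(G)\le\vol(\mcX)$ and $\vol(B)<\delta\,\vol(\mcX)$. The argument even yields the slightly sharper bound $(1-\delta)\varepsilon^p+\delta D^p$ when $\varepsilon\le D$, though the weaker form is all the statement requires. The condition $p\ge1$ plays no real role, since the argument works for any $p>0$.

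The only step requiring care is measurability. The inequalities themselves are immediate once the good/bad decomposition is in place. If one prefers to avoid the countable-supremum argument, one can interpret the integral as an upper (outer) integral. The same bound then holds verbatim, because the pointwise estimates are majorized by the simple function $\varepsilon^p\mathds 1_G + D^p\mathds 1_B$, whose integral is controlled by $\vol(B)<\delta\vol(\mcX)$. This does require $B$ to be measurable, which is already implicit in Definition~\ref{def:eps_volume_error}.
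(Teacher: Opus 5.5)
Your proposal is correct and follows the same argument as the paper. Both split $\mcX$ into a good set, where the trajectory sup-error stays below $\varepsilon$, and a bad set of normalized volume less than $\delta$, then bound the time-averaged error pointwise by $\varepsilon^p$ and $D^p$ respectively and integrate. Your measurability check, and your choice of the $\geq \varepsilon$ threshold for the bad set (matching Definition~\ref{def:eps_volume_error}, where the paper uses $> \varepsilon$), add rigor the paper omits but do not change the route.
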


\begin{proof}
Partition $\mcX$ into the ``good'' set $G = \{x_0 : \sup_{t \geq 0} \|\varphi(t, x_0) - \hat{\varphi}(t, x_0)\| \leq \varepsilon\}$ and ``bad'' set $B = \mcX \setminus G$.
By $\varepsilon$-$\delta$ closeness, $\vol(B) < \delta \cdot \vol(\mcX)$.
On $G$: error bounded by $\varepsilon$ for all time, so time-average $\leq \varepsilon^p$.
On $B$: error bounded by diameter $D$, so time-average $\leq D^p$.
Summing: $\mathcal{E}_{p,\infty} \leq \frac{\vol(G)}{\vol(\mcX)} \varepsilon^p + \frac{\vol(B)}{\vol(\mcX)} D^p \leq \varepsilon^p + \delta \cdot D^p$.
\end{proof}

\begin{corollary}[MSE bound]\label{cor:mse}
Under the assumptions of Theorem~\ref{thm:lp_bound}:
\(
\MSE_\infty(\varphi, \hat{\varphi}) \leq \varepsilon^2 + \delta \cdot D^2.
\)
\end{corollary}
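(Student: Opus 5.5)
The MSE bound is the case $p=2$ of Theorem~\ref{thm:lp_bound}. I will first fix the meaning of $\MSE_\infty$. The natural reading, and the one the paper intends as the practical training metric, is the time-averaged squared Euclidean trajectory error, averaged uniformly over initial conditions:
\[
\MSE_\infty(\varphi,\hat\varphi) \coloneqq \frac{1}{\vol(\mcX)}\int_{\mcX}\limsup_{T\to\infty}\frac{1}{T}\int_0^T\|\varphi(t,x_0)-\hat\varphi(t,x_0)\|^2\,dt\,dx_0 .
\]
This is exactly $\mathcal{E}_{2,\infty}(\varphi,\hat\varphi)$ from Definition~\ref{def:lp_error}. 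The Euclidean norm is the one already used in Definitions~\ref{def:eps_volume_error} and~\ref{def:lp_error}, so no change of norm is involved.

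With this identification, I would invoke Theorem~\ref{thm:lp_bound} with $p=2$. Its hypotheses are the standing assumptions of the corollary: continuous flows on a bounded domain of diameter $D$, and $\varepsilon$-$\delta$ closeness. The conclusion reads $\mathcal{E}_{2,\infty}\le \varepsilon^2+\delta D^2$, which is the claim.

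For completeness, I would also re-derive the bound directly through the good/bad partition. On $G$ the integrand is at most $\varepsilon^2$ for all $t$, so its $\limsup$ time average is at most $\varepsilon^2$. On $B$ the integrand is at most $D^2$, since both trajectories stay in $\mcX$. Weighting by $\vol(G)/\vol(\mcX)\le 1$ and $\vol(B)/\vol(\mcX)<\delta$ gives the bound.

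The only real subtlety is measurability: we need the map $x_0\mapsto\limsup_T \frac1T\int_0^T\|\cdot\|^2\,dt$ to be measurable so that the outer integral makes sense. I would handle it as follows. By continuity of the flows, each truncated average is continuous in $x_0$. The $\limsup$ over $T$ can be taken along rational $T$, because the averages are continuous in $T$. A countable $\limsup$ of continuous functions is Borel, and it is bounded by $D^2$, so it is integrable. The same reasoning, using a supremum over rational $t$, shows that $G$ and $B$ are measurable. I expect no further obstacles.
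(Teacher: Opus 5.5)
Your proposal is correct and matches the paper: the corollary is simply the case $p=2$ of Theorem~\ref{thm:lp_bound}, whose proof is the same good/bad partition you re-derive. Your measurability argument is a useful addition that the paper leaves implicit: truncated averages are continuous in $x_0$, and both the $\limsup$ over $T$ and the $\sup_t$ reduce to countable operations over rationals.
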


\begin{remark}
Theorem~\ref{thm:lp_bound} establishes that topological guarantees (via $\varepsilon$-$\delta$ closeness) imply bounds on practical metrics.
The converse is false: low MSE does not guarantee topological correctness, as a model can achieve low average error while failing to capture asymptotic stability or periodicity.
\end{remark}

%==============================================================================
\section{Numerical experiments}\label{sec:experiments}

We validate the three guarantees and the basin / phase / discretization taxonomy on trained Neural ODEs and on real task-trained networks. App.~\ref{app:experiments} gives full details, additional architectures, and the budget and width scaling.\footnote{Code reproducing all experiments is available at \url{https://github.com/Asagodi/uap-infinite-time-code}.}

\paragraph{$\varepsilon$-$\delta$ closeness and the failure modes.}
We illustrate the metric on the bistable Duffing system (Figure~\ref{fig:exp_bpd}A,B). A width-$64$ Neural ODE reproduces both basins with $99.96\%$ agreement; the trajectory sup-error is of order $10^{-3}$ across both basins and rises to the basin diameter only on a thin collar around the separatrix (the B-type error), so $\delta(\varepsilon)$ stays small, dominated by the thin separatrix collar ($\delta(0.1) = 0.007$ at the scale of the wells). The other two modes are illustrated in App.~\ref{app:experiments}: on the Van der Pol oscillator a residual period error of $1.25 \times 10^{-3}$ produces phase drift that reaches the cycle diameter near $t \approx 6 \times 10^{3}$, invisible to a $45$-cycle validation window (P-type) and removed by the exact period matching of Theorem~\ref{thm:uap_lc}; and on a ring attractor the continuum discretizes into $10$ alternating sinks and saddles while the ring itself persists (D-type).

\begin{figure}[t]
\centering
\begin{tikzpicture}[inner sep=0]
  % widths solved so all four panels render at equal height (A's square+colorbar
  % shape needs a wider inclusion; its base font is dropped to keep on-page size)
  \node[anchor=north west] (a) at (0,0) {\includegraphics[width=0.272\textwidth]{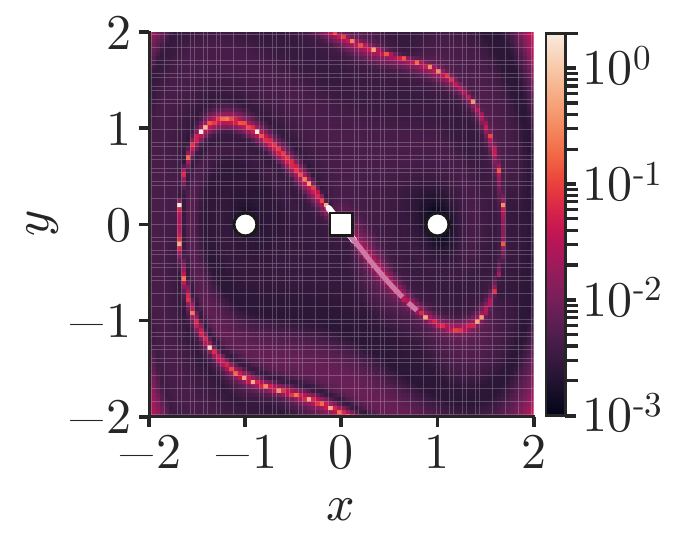}};
  \node[anchor=north west, xshift=1mm] (b) at (a.north east) {\includegraphics[width=0.239\textwidth]{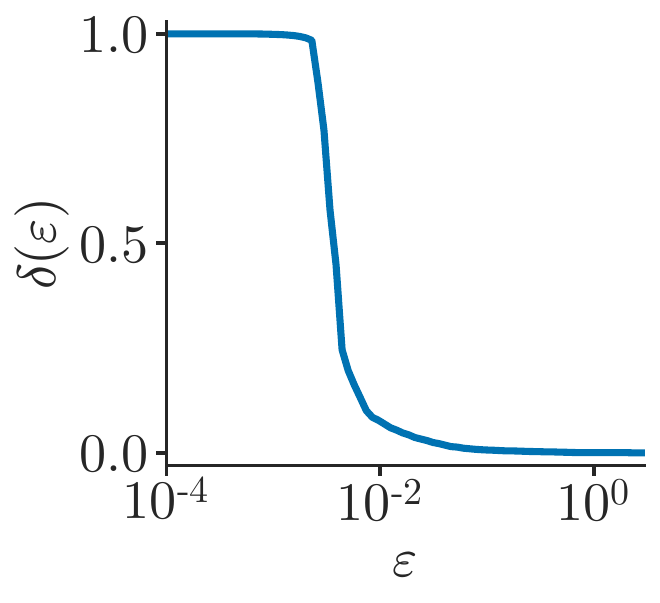}};
  \node[anchor=north west, xshift=1mm] (c) at (b.north east) {\includegraphics[width=0.229\textwidth]{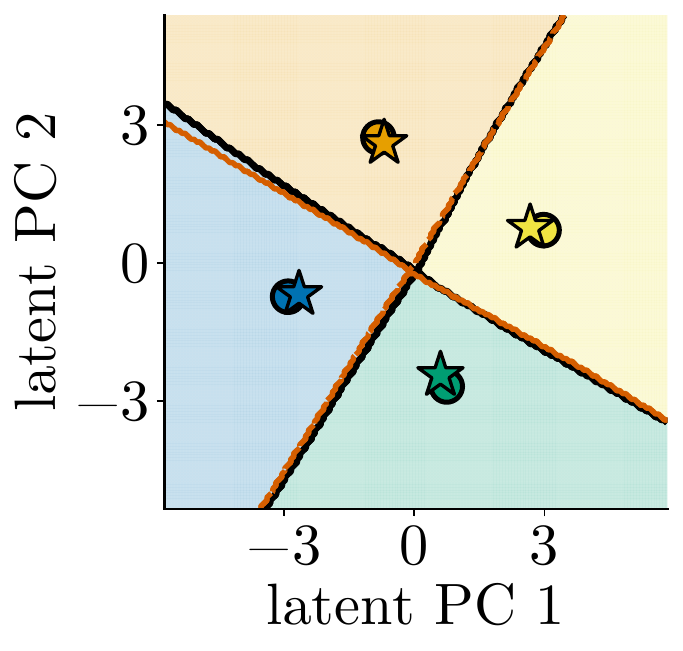}};
  \node[anchor=north west, xshift=1mm] (d) at (c.north east) {\includegraphics[width=0.238\textwidth]{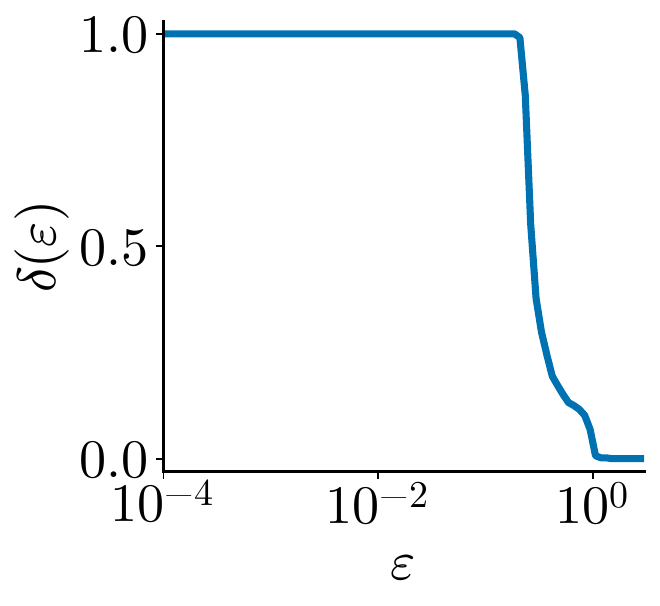}};
  % labels in the top-left corner; (B) and (D) nudged left to clear the y-ticks
  \foreach \n/\L/\dx in {a/A/0pt, b/B/-1.5mm, c/C/0pt, d/D/-1.5mm}
    \node[anchor=north west, font=\small\bfseries, inner sep=1.5pt,
          fill=white, fill opacity=0.72, text opacity=1, rounded corners=0.5pt]
      at ([xshift=\dx]\n.north west) {(\L)};
\end{tikzpicture}
\caption{$\varepsilon$-$\delta$ closeness on a bistable system and on a real trained network.
\textbf{(A)} Sup-error map for a Neural ODE fit to the damped Duffing system: the error concentrates on a thin collar around the separatrix (B-type); basin agreement $99.96\%$.
\textbf{(B)} Its $\delta(\varepsilon)$ curve stays small ($\delta(0.1) = 0.007$), dominated by the separatrix collar.
\textbf{(C)} Real versus fitted separatrices and fixed points on a 2D slice through four memory states: the trained network's basin boundaries (solid) and the Neural ODE's (dashed) coincide over $98\%$ of the slice, and the network's fixed points (circles) sit close to the Neural ODE's (stars), so the approximation preserves the basin partition ($\mathcal{F}_{\mathrm{FP}}$), as the true and learned separatrices coincide in (A).
\textbf{(D)} That Neural ODE's $\delta(\varepsilon)$ curve against the trained network, on the network's own readout ($99.2\%$ per-bit accuracy).}
\label{fig:exp_bpd}
\end{figure}

\paragraph{Architecture generality and real networks.}
On a three-dimensional target with eight fixed points, a smooth-activation MLP, a $C^1$ RNN field, and a radial-basis-function field each recover all eight attractors with $\hat\delta(0.3) \le 6 \times 10^{-3}$, as Theorem~\ref{thm:uap_fp} predicts for any $C^1$-dense class. Two real task-trained networks fall in the target classes: a gated recurrent network trained on the $3$-bit flip-flop task~\citep{sussillo2013blackbox, versteeg2025computation} has stable fixed points at all eight memory states ($\mathcal{F}_{\mathrm{FP}}$), and a Neural ODE inherits all eight (autonomous basin agreement $99.8\%$) and reproduces the readout at $99.2\%$ per-bit accuracy (Figure~\ref{fig:exp_bpd}C,D); a rectified-tanh network trained to integrate angular velocity~\citep{Sagodi2024a} holds heading on a ring that the finite network tiles into $12$ hyperbolic sinks alternating with $12$ saddles, a structurally stable approximation of the continuum whose discrete fixed points a fitted Neural ODE inherits ($\mathcal{F}_{\mathrm{CA}}$, D-type).

%==============================================================================
\section{Discussion}\label{sec:discussion}

\paragraph{Summary.}
We established universal approximation theorems for multistable dynamical systems over infinite time horizons.
Our results apply to any Neural ODE architecture with the universal approximation property and provide explicit $\varepsilon$-$\delta$ guarantees linking vector field approximation to trajectory bounds.
For systems with stable fixed points ($\mcF_\mathrm{FP}$), trajectory errors decay to zero on the good set; for limit cycles ($\mcF_\mathrm{LC}$), errors stabilize at a small residual set by the cycle's geometric distance and phase mismatch.
Finally, for continuous attractors ($\mcF_\mathrm{CA}$), we show that arbitrary precision is achievable via dense tiling of discrete attractors, overcoming their structural instability.
Furthermore, we established a rigorous link between our topological $\varepsilon$-$\delta$ condition and the squared error loss used in training.

For finite-time approximation, almost any architecture suffices, including linear state-space models.
However, their temporal generalization behavior can vary wildly, rendering finite-time guarantees inadequate for accurate, reliable long-time modeling of a system~\citep{sussillo2013blackbox,koppe2019identifying}.
If we require the causal relations in a mechanism~\citep{kaplan2011explanatory,cao2021explanatory1,cao2021explanatory2} to be captured by a vector field, then infinite-time horizon results provide for mechanistic interpretability guarantees at the state space level.
This distinguishes Neural ODEs from finite-dimensional RNNs~\citep{niu2019recurrent,habiba2020neural}, which require embedding-type approximations that transform the state space and introduce extra state variables \citep{versteeg2025computation}; Neural ODEs instead enable direct approximation of the vector field in the original coordinates. % in the original coordinates. 
This also makes XFADS~\citep{Dowling2024b} more expressive than LFADS~\citep{sussillo2016lfads,pandarinath2018inferring}, enabling topological reconstruction of multistable attractors.

Beyond existence proofs, our framework provides a practical taxonomy for analyzing model failure in computational neuroscience. 
By classifying errors into basin-type, period-type, and trajectory-type, researchers can diagnose specifically \emph{why} a model fails to match a true biological implementation.
For example, a Basin-type error corresponds directly to a failure in reliable memory storage, whereas Period-type errors reflect the fragility of oscillatory binding.
%Furthermore, this framework introduces a novel metric for the complexity of learning a computation, quantified by geometric properties such as the total length of separatrices required to define decision boundaries. 
This enables a more comprehensive and unified approach to comparing models of neural computation, moving beyond simple goodness-of-fit metrics to geometric evaluations of dynamic fidelity.
App.~\ref{app:experiments} illustrates the three guarantees and this taxonomy on trained Neural ODEs and on real task-trained recurrent networks.

\paragraph{Limitations.}
\textit{Learnability gap.}
Our theorems guarantee existence of approximations, not their learnability.
The loss landscape for infinite-horizon systems is prone to exploding gradients~\citep{ribeiro2020beyond}, and practical training requires specialized algorithms such as multiple shooting, teacher forcing~\citep{hess2023teacherforcing} or homotopy-based training~\citep{ko2023homotopy}, along with strong inductive biases.

\textit{Fragility of period matching.}
Exact isochrony is not structurally stable: in the space of vector fields, those with a prescribed period $T$ form a measure-zero set.
%Perturbation to network weights generically change the period, reintroducing phase drift.
Thus, while Neural ODEs are universal approximators for limit cycles, they are not robust implementations for tasks requiring infinite-time phase locking.
To mitigate this, we propose to explicitly parametrize the period of limit cycles to separate the approximation of attractor geometry from the flow speed on it~\citep{Sagodi2025b}. % to decouple the learning of the invariant set from the dynamics on the manifold.

\textit{Finite-time sensitivity.}
Our guarantee is asymptotic, constraining the basin and phase structure as $t \to \infty$; the finite-time error is instead set by how strongly the flow amplifies perturbations before settling.
Even in Morse-Smale systems this amplification is largest on the B-type collar around separatrices, where finite-time Lyapunov exponents are large~\citep{shadden2005definition}, and non-normal linearizations can add substantial transient growth, quantified by pseudospectra~\citep{trefethen2005spectra}, without changing the asymptotic guarantee.
These finite-time diagnostics are complementary to the infinite-time closeness we establish.

\textit{Structurally unstable and chaotic systems.}
Our uniform convergence metric is ill-suited for chaotic systems due to sensitive dependence on initial conditions~\citep{hirsch1995computing}.
We term this \textbf{C-type error}: trajectories diverge exponentially even with accurate local approximation.
Validating chaotic models therefore requires invariant set reconstruction: matching Lyapunov spectra~\citep{hart2024attractor}, minimizing attractor Hausdorff distance~\citep{hess2023teacherforcing}, or shadowing~\citep{hoppensteadt2013analysis}.
\textit{Quasiperiodic flows on tori} similarly suffer frequency locking under generic perturbations~\citep{guckenheimer1983nonlinear,guckenheimer1988structurally,Park2023a}, while \textit{center manifolds} and \textit{homoclinic cycles} are determined by higher-order terms or precise manifold intersections.
%Infinite-time guarantees for these classes likely require architectural constraints (e.g., symmetry) rather than general-purpose approximation.

\textit{Stochastic dynamics.}
Real-world systems are invariably subject to noise. 
Recent work has begun to establish universal approximation properties for stochastic reservoirs and filters \citep{gonon2019reservoir, bishop2023recurrent, ehlers2025stochastic}, approximation guarantees for the distance between path measures \citep{backhoff2022adapted, chen2022universal} and flow matching techniques \citep{lipman2024flow, mishne2024elucidating}.

\textit{Non-autonomous systems.} 
Extending infinite-time guarantees to input-driven dynamics remains open, requiring the analysis of pullback attractors rather than static invariant sets~\citep{kloeden2011nonautonomous}.
%While infinite-horizon proofs are pending, it is established that non-autonomous nonlinear systems can be approximated on compact intervals \citep[Lemma 5.3.2]{garces2012strategies, smale1974differential}, broadening RNN applicability to signal-influenced systems. 
Specifically, any trajectory can be approximately realized by a continuous-time recurrent network on a finite interval and on an infinite interval for a periodic input~\citep{nakamura2009approximation}.
Recently it was established that finite-time horizon approximation for input-driven systems using Neural ODEs \citep{li2022deep, ko2023homotopy, zakwan2023universal} and neural flows~\citep{bilos2021neural}.

%==============================================================================
\section*{Acknowledgments}
This work was supported by NIH RF1-DA056404 and the Portuguese Recovery and Resilience Plan (PPR), through project number 62, Center for Responsible AI, and the Portuguese national funds, through FCT---Funda\c{c}\~{a}o para a Ci\^{e}ncia e a Tecnologia---in the context of the project UIDB/04443/2020.

\newpage
\bibliographystyle{plainnat}
\bibliography{../all_ref,../catniplab}

%==============================================================================
\newpage
\appendix

\section{Definitions and Background}\label{app:definitions}

\subsection{Norms and Function Spaces}

\begin{definition}[$C^1$ norm]\label{def:c1_norm}
For $f: \mcX \to \reals^n$ continuously differentiable:
\[
\|f\|_{C^1} \coloneqq \sup_{x \in \mcX} \|f(x)\| + \sup_{x \in \mcX} \|Df(x)\|_{\mathrm{op}}.
\]
\end{definition}

\begin{definition}[Hausdorff distance]\label{def:hausdorff}
For compact sets $A, B \subset \mcX$:
\[
d_H(A, B) \coloneqq \max\left\{ \sup_{a \in A} \inf_{b \in B} \|a - b\|, \sup_{b \in B} \inf_{a \in A} \|a - b\| \right\}.
\]
\end{definition}

\subsection{Dynamical Systems}

\begin{definition}[$\omega$-limit set]\label{def:omega_limit}
For a trajectory $\varphi(t, x_0)$, the \textbf{$\omega$-limit set} is:
\[
\omega(x_0) \coloneqq \bigcap_{T > 0} \overline{\{\varphi(t, x_0) : t \geq T\}}.
\]
Equivalently, $y \in \omega(x_0)$ if and only if there exists a sequence $t_n \to \infty$ with $\varphi(t_n, x_0) \to y$.
\end{definition}

\begin{definition}[Attractor]\label{def:attractor}
A compact invariant set $\mathcal{A} \subset \mcX$ is an \textbf{attractor} if it has an open neighborhood $U \supset \mathcal{A}$ such that $\varphi(t, U) \subset U$ for all $t > 0$ and $\bigcap_{t > 0} \varphi(t, U) = \mathcal{A}$.
\end{definition}

\begin{definition}[Basin of attraction]\label{def:basin}
For attractor $\mathcal{A}$:
\[
\boa(\mathcal{A}) \coloneqq \{x_0 \in \mcX : \lim_{t \to \infty} \dist(\varphi(t, x_0), \mathcal{A}) = 0\}.
\]
\end{definition}

\begin{definition}[Separatrix]\label{def:separatrix}
A \textbf{separatrix} is the boundary between two basins of attraction.
For Morse-Smale systems, separatrices are (unions of) stable manifolds of saddle-type equilibria or periodic orbits.
\end{definition}

\begin{definition}[Non-wandering set]\label{def:non_wandering}
A point $x \in \mcX$ is \textbf{non-wandering} if for every neighborhood $U$ of $x$ and every $T > 0$, there exists $t > T$ such that $\varphi(t, U) \cap U \neq \emptyset$.
The \textbf{non-wandering set} $\Omega(f)$ is the set of all non-wandering points.
Fixed points and periodic orbits are always non-wandering.
\end{definition}

\begin{definition}[Transversal intersection]\label{def:transversal}
Two submanifolds $M, N \subset \mcX$ intersect \textbf{transversally} at $p \in M \cap N$ if $T_p M + T_p N = T_p \mcX$ (the tangent spaces span the ambient space).
The intersection is transversal if this holds at every point of $M \cap N$.
\end{definition}

\begin{definition}[Tubular neighborhood]\label{def:tubular}
For a submanifold $M \subset \mcX$, a \textbf{tubular neighborhood} of radius $r > 0$ is the set $N_r(M) = \{x \in \mcX : \dist(x, M) < r\}$.
\end{definition}

\begin{definition}[Poincar\'e return map]\label{def:poincare_map}
Let $\gamma$ be a periodic orbit and $\Sigma$ a local cross-section (codi\-men\-sion-1 submanifold transverse to the flow) intersecting $\gamma$ at a point $p$.
The \textbf{Poincar\'e return map} $P: U \to \Sigma$ is defined on a neighborhood $U \subset \Sigma$ of $p$ by $P(x) = \varphi(\tau(x), x)$, where $\tau(x) > 0$ is the first return time to $\Sigma$.
The periodic orbit $\gamma$ is hyperbolic if and only if $DP(p)$ has no eigenvalues of modulus 1.
\end{definition}

\begin{definition}[Topological equivalence]\label{def:top_equiv}
Flows $\varphi$ and $\psi$ are \textbf{topologically equivalent} if there exists a homeomorphism $h: \mcX \to \mcX$ mapping orbits of $\varphi$ to orbits of $\psi$ preserving orientation (i.e., the direction of time along orbits).
\end{definition}

Structural stability is a fundamental concept in the study of dynamical systems \citep{peixoto1959ss, mane1987ss, hu1994ss, hayashi1997invariant, robbin1971ss, robinson1974ss, palis1970ss}. 

\begin{definition}[Structural stability]\label{def:structural_stability}
%A vector field $f$ is \textbf{structurally stable} if there exists $\rho > 0$ such that $\|f - \tilde{f}\|_{C^1} < \rho$ implies the flows are topologically equivalent (Definition~\ref{def:top_equiv}).
Let $G$ be an open domain in $\mathbb{R}^n$ with compact closure and smooth $(n-1)$-dimensional boundary. 
Consider the space $X^1(G)$ consisting of restrictions to $G$ of $C^1$ vector fields on $\mathbb{R}^n$ that are transversal to the boundary of $G$ and are inward oriented.
This space is endowed with the $C^1$ metric.
A vector field $F \in X^1(G)$ is \textit{weakly structurally stable} if for any sufficiently small perturbation $F_1$, the corresponding flows are \textit{topologically equivalent} (Definition~\ref{def:top_equiv}) on $G$: there exists a homeomorphism $h: G \to G$ which transforms the oriented trajectories of $F$ into the oriented trajectories of $F_1$.
If, moreover, for any $\varepsilon > 0$ the homeomorphism $h$ may be chosen to be $C^0$ $\varepsilon$-close to the identity map when $F_1$ belongs to a suitable neighborhood of $F$ depending on $\varepsilon$, then $F$ is called (strongly) \textit{structurally stable}.
\end{definition}

\begin{remark}
Structural stability ensures that the qualitative features of the dynamics (number and type of fixed points, periodic orbits, connections between them) persist under small perturbations. 
This is essential for robust modeling, as approximation errors act as perturbations to the true system.
\end{remark}

\subsection{Universal Approximation}

\begin{theorem}[$C^r$ UAP for vector fields~{\citep{hornik1991approximation}}]\label{thm:uap_vf}
Let $\mcX \subset \reals^n$ be compact.
For any $f \in C^r(\mcX, \reals^n)$ with $r \geq 0$ and any $\varepsilon > 0$, there exists a feedforward neural network $\hat{f}$ with smooth activation (e.g., sigmoid, tanh) such that $\|f - \hat{f}\|_{C^r} < \varepsilon$.
\end{theorem}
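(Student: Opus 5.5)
The statement is Theorem~\ref{thm:uap_vf}: a $C^r$ universal approximation result for feedforward networks with a smooth activation on a compact set $\mcX$. I will not prove a new approximation theorem. I will reduce to the classical density argument of \citet{hornik1991approximation}, whose conclusion is that networks with a smooth, nonpolynomial, bounded activation are dense in $C^r(K)$ on compact $K$. The plan has three steps: work componentwise, extend to a neighborhood so derivatives on $\mcX$ make sense, then prove density of scalar networks in the $C^r$ topology.

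\textbf{Step 1 (reduction to scalar outputs and extension).} It suffices to approximate each component $f_i: \mcX \to \reals$ to accuracy $\varepsilon/n$ in the $C^r$ norm. A network with $n$ outputs is obtained by stacking $n$ scalar networks in parallel and summing their hidden units into separate output rows. The sup and operator-norm parts of $\|\cdot\|_{C^r}$ are then controlled by the sum of the componentwise errors. To make derivatives on $\mcX$ well defined, I extend each $f_i$ to a $C^r$ function with compact support on $\reals^n$, using Whitney extension or the assumption that $f$ is a restriction of an ambient $C^r$ field. I then work on a compact ball $B \supset \mcX$.

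\textbf{Step 2 (density of ridge functions in $C^r(B)$).} I take as the target space the closure $\overline{S}$, in the $C^r(B)$ topology, of the span $S$ of the functions $x \mapsto \sigma(w\cdot x + b)$.
\begin{itemize}
\item Differentiating $\sigma(\lambda (w\cdot x) + b)$ in $\lambda$ at $\lambda=0$ gives limits of difference quotients. These are $(w\cdot x)^k \sigma^{(k)}(b)$, and the quotients converge in $C^r(B)$ because $\sigma$ is smooth. Choosing $b$ with $\sigma^{(k)}(b)\neq 0$, which is possible since $\sigma$ is not a polynomial (true for tanh and sigmoid), puts every ridge monomial $(w\cdot x)^k$ in $\overline{S}$.
\item Ridge monomials of degree $k$ span all homogeneous polynomials of degree $k$, a standard polarization fact. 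Hence every polynomial lies in $\overline{S}$.
\item Polynomials are dense in $C^r(B)$. Convolving with a mollifier and then applying Weierstrass or Bernstein approximation to the mollified function approximates the function and all derivatives up to order $r$ simultaneously.
\end{itemize}

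\textbf{Step 3 (assembly).} Combining the steps gives, for each $i$, a finite network $\hat f_i$ with $\|f_i-\hat f_i\|_{C^r(\mcX)} \le \|f_i-\hat f_i\|_{C^r(B)} < \varepsilon/n$. Stacking the $\hat f_i$ as in Step 1 yields $\hat f$ with $\|f-\hat f\|_{C^r}<\varepsilon$.

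\textbf{Main obstacle.} The delicate step is the first bullet of Step 2. I need the difference quotients in $\lambda$ to converge in the full $C^r$ norm, uniformly on $B$, and not merely pointwise. I would handle this with Taylor's theorem and integral remainders, using that $\sigma^{(k+r+1)}$ is bounded on compact sets. Each $x$-derivative of the quotient is then an integral of a continuous function of $(\lambda,x)$ over a compact set, so the convergence is uniform in $x$.
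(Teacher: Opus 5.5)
Your argument is correct, and it takes a genuinely different route from the paper, which offers no proof of its own. The paper cites \citet{hornik1991approximation} and adds a remark that the $C^r$ statement ``follows from the $C^0$ UAP combined with smoothness of neural network outputs.'' Taken literally, that is not a valid deduction, since uniform closeness says nothing about derivatives (e.g.\ $f+\epsilon\sin(x/\epsilon^2)$). The paper's support therefore rests entirely on Hornik's separately proved $C^m$-density theorem, which assumes $C^m$, bounded, nonconstant activations.

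You instead give a self-contained ridge-function argument in the style of Leshno--Lin--Pinkus--Schocken and Pinkus. You take the closure of the span of neurons directly in the $C^r(B)$ topology, obtain $(w\cdot x)^k$ as $C^r$-limits of finite differences in the weight scale, polarize to get all polynomials, and conclude by $C^r$-density of polynomials. Working in the $C^r$ closure from the outset is precisely the step the paper's remark glosses over. It buys you an elementary proof with a single hidden layer under the natural hypothesis: smooth and non-polynomial, with no boundedness. It therefore also covers unbounded activations such as softplus or SiLU. It also makes explicit that the statement's bare ``smooth activation'' needs an extra condition, since a linear activation is smooth but gives only affine maps at any depth.

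The citation, in turn, buys generality you do not need here. It covers activations that are only $C^m$ (your use of $\sigma^{(k)}$ for every $k$ would then require mollifying $\sigma$ first) and weighted Sobolev norms.

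Two small tightenings:
\begin{enumerate}
\item In your third bullet, Weierstrass's theorem used as a black box gives no derivative control. You need a scheme that also converges in $C^r$, such as multivariate Bernstein polynomials or convolution with a polynomial (Landau) kernel.
\item For an arbitrary compact $\mcX$, Whitney extension requires $f$ to be given as a Whitney $C^r$ jet. The reading ``restriction of an ambient $C^r$ field'' is the safe one, and it matches the paper's setting in Definition~\ref{def:structural_stability}.
\end{enumerate}
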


\begin{remark}
The $C^r$ approximation follows from the $C^0$ UAP combined with smoothness of neural network outputs: smooth activations produce smooth outputs, and derivatives can be approximated by approximating the function~\citep{hornik1991approximation}.
For our theorems, $r = 1$ suffices.
\end{remark}

\begin{lemma}[Levy-Desplanques]\label{lem:levy_desplanques}
A square matrix $M \in \reals^{k \times k}$ is \textbf{strictly diagonally dominant} if $|M_{ii}| > \sum_{j \neq i} |M_{ij}|$ for all $i$.
Every strictly diagonally dominant matrix is invertible.
\end{lemma}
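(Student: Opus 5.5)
We prove the Levy-Desplanques lemma by contradiction via the kernel. Suppose $M$ is strictly diagonally dominant but singular. Then there exists a nonzero vector $v \in \reals^k$ with $Mv = 0$.

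Choose an index $i$ at which $|v_i| = \max_j |v_j|$. Since $v \neq 0$, we have $|v_i| > 0$.

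Look at the $i$-th row of the equation $Mv = 0$, which gives $M_{ii} v_i = -\sum_{j \neq i} M_{ij} v_j$. Taking absolute values and using the triangle inequality together with $|v_j| \le |v_i|$, we get
\[
|M_{ii}|\,|v_i| \le \sum_{j \neq i} |M_{ij}|\,|v_j| \le \Big(\sum_{j \neq i} |M_{ij}|\Big)|v_i|.
\]
Dividing by $|v_i| > 0$ yields $|M_{ii}| \le \sum_{j \neq i} |M_{ij}|$. This contradicts strict diagonal dominance in row $i$.

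Hence $\ker M = \{0\}$, and since $M$ is square, $M$ is invertible.

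The only step needing care is the choice of the maximizing index. Picking the maximizing index is what lets the off-diagonal terms be bounded by $|v_i|$ and then cancelled. Everything else is routine.

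An alternative route would go through the Gershgorin circle theorem: every eigenvalue lies in a disc centered at some $M_{ii}$ with radius $\sum_{j\neq i}|M_{ij}| < |M_{ii}|$, so no disc contains $0$. But that theorem is itself proved by exactly the argument above, so the direct argument is preferable.
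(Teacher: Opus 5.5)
Your proof is correct and follows essentially the same argument as the paper: take a nonzero kernel vector, choose an index of maximal modulus, and bound row $i$ of $Mv=0$ with the triangle inequality to contradict strict diagonal dominance. You also correctly note that injectivity of a square matrix gives invertibility, a step the paper leaves implicit.
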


\begin{proof}
Suppose $Mx = 0$ for some $x \neq 0$.
Let $i = \arg\max_j |x_j|$, so $|x_i| > 0$.
Row $i$ gives $M_{ii} x_i = -\sum_{j \neq i} M_{ij} x_j$, hence $|M_{ii}| |x_i| \leq \sum_{j \neq i} |M_{ij}| |x_j| \leq |x_i| \sum_{j \neq i} |M_{ij}|$.
This contradicts diagonal dominance.
\end{proof}

%==============================================================================
\section{Topological structure of the space of dynamical systems}\label{sec:approximation_topology}
In this section, we rigorously formalize the notion of approximation used in our main result. 
We demonstrate that our $(\varepsilon, \delta)$-closeness condition (Def.~\ref{def:eps_delta_close}) generates a valid topology on the space of dynamical systems, specifically the topology of \textit{convergence in measure} regarding the supremum norm of trajectories.

Throughout this appendix, we work with three objects.
The state space $\mcX \subset \reals^n$ is the bounded open domain fixed in Section~\ref{sec:prelim}.
The space of dynamical systems under consideration is denoted $\mathfrak{S} \subseteq \mathfrak{X}^1(\mcX)$; concretely, $\mathfrak{S}$ may be taken as any of the target classes $\mcF_{\mathrm{FP}}, \mcF_{\mathrm{LC}}, \mcF_{\mathrm{CA}}$ of Section~\ref{sec:main} together with the hypothesis class $\hat{\mcF}$, or any larger ambient family.
The measure $\mu$ is the normalized Lebesgue measure on $\mcX$, i.e., $\mu(A) \coloneqq \vol(A)/\vol(\mcX)$ for every Borel set $A \subseteq \mcX$, so that $\mu(\mcX) = 1$ and $\mu$ agrees with the integrand of Eq.~\eqref{eq:eps_volume_error}.
We define the trajectory difference between two systems $f, g \in \mathfrak{S}$ at an initial condition $x_0 \in \mcX$ as:
\begin{equation}
    \Delta(f, g, x_0) \coloneqq \sup_{t \ge 0} \|\varphi_f(t,x_0) - \varphi_g(t,x_0)\|.
\end{equation}

For any reference system $f \in \mathfrak{S}$ and constants $\varepsilon > 0, \delta > 0$, we define the neighborhood basis element $B(f, \varepsilon, \delta)$ as the set of systems that deviate by at most $\varepsilon$ on all but a small volume of initial conditions:
\begin{equation}
    B(f, \varepsilon, \delta) \coloneqq \left\{ g \in \mathfrak{S} \;\middle|\; \mu\left(\{ x_0 \in \mcX : \Delta(f, g, x_0) \ge \varepsilon \}\right) < \delta \right\}.
\end{equation}

%Let $\mathfrak{S}$ be the space of considered dynamical systems on a compact domain $\mcX$ with normalized Lebesgue measure $\mu$.
%For any reference system $f \in \mathfrak{S}$ and constants $\varepsilon > 0, \delta > 0$, we define the neighborhood basis element:
%\[ B(f, \varepsilon, \delta) \coloneqq \left\{ g \in \mathfrak{S} \;\middle|\; \exists E \subset \mcX, \mu(E) < \delta \text{ s.t. } \forall x \notin E, \sup_{t \ge 0} \|\varphi_f(t,x) - \varphi_g(t,x)\| < \varepsilon \right\}. \]

\begin{proposition}%[Basis Properties]
The collection $\mathcal{B} = \{ B(f, \varepsilon, \delta) \}$ satisfies the conditions to be a basis for a topology on $\mathfrak{S}$.
\end{proposition}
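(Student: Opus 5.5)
The plan is to verify the two standard basis axioms. First, the sets in $\mathcal{B}$ cover $\mathfrak{S}$. Second, whenever $g \in B(f_1,\varepsilon_1,\delta_1) \cap B(f_2,\varepsilon_2,\delta_2)$, there is a basis element containing $g$ and contained in that intersection. The cover axiom is immediate: $\Delta(f,f,x_0)=0<\varepsilon$ for every $x_0$, so the exceptional set is empty, its measure $0$ is $<\delta$, and $f \in B(f,\varepsilon,\delta)$.

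Two preliminary facts about $\Delta$ come first.

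\emph{Measurability.} For fixed $x_0$, $t \mapsto \varphi_f(t,x_0)-\varphi_g(t,x_0)$ is continuous, so the supremum over $t\ge0$ equals the supremum over rational $t$. Each map $x_0 \mapsto \|\varphi_f(t,x_0)-\varphi_g(t,x_0)\|$ is continuous, since flows of $C^1$ fields depend continuously on initial data. Hence $\Delta(f,g,\cdot)$ is a countable supremum of continuous functions. It is therefore lower semicontinuous and Borel, with values in $[0,D]$, and all sets $\{\Delta \ge c\}$ are measurable.

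\emph{Triangle inequality.} We have $\Delta(f,h,x_0) \le \Delta(f,g,x_0)+\Delta(g,h,x_0)$ pointwise. This follows by applying the triangle inequality at each $t$ and then using subadditivity of the supremum.

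The key step, and the main obstacle, is a ``shrinking'' lemma: for each $g \in B(f,\varepsilon,\delta)$ there exist $\eta>0$ and $\delta'>0$ with $B(g,\eta,\delta') \subseteq B(f,\varepsilon,\delta)$. The difficulty is that the defining condition is strict in $\delta$ but uses the non-strict threshold $\ge\varepsilon$, so some room in $\varepsilon$ must be found.

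To get it, consider the decreasing family $A_k \coloneqq \{x_0 : \Delta(f,g,x_0) \ge \varepsilon - 1/k\}$. Its intersection is $\{\Delta(f,g,\cdot)\ge\varepsilon\}$, which has measure $<\delta$. Since $\mu$ is finite, continuity from above gives some $k$ with $\mu(A_k)<\delta$. Set $\eta \coloneqq 1/k$ (shrunk if necessary so that $\eta<\varepsilon$) and $\delta' \coloneqq \delta-\mu(A_k)>0$.

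Now let $h \in B(g,\eta,\delta')$. By the triangle inequality for $\Delta$,
\[
\{\Delta(f,h,\cdot)\ge\varepsilon\} \subseteq A_k \cup \{\Delta(g,h,\cdot)\ge\eta\}.
\]
The measure of the left-hand set is therefore at most $\mu(A_k)+\mu(\{\Delta(g,h,\cdot)\ge\eta\})<\mu(A_k)+\delta'=\delta$. Hence $h\in B(f,\varepsilon,\delta)$.

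For the intersection axiom, apply the lemma to $g$ in each of $B(f_i,\varepsilon_i,\delta_i)$, $i=1,2$, obtaining pairs $(\eta_i,\delta_i')$. Then set $\eta=\min_i\eta_i$ and $\delta^*=\min_i\delta_i'$. The sets $B(g,\eta,\delta)$ are monotone: they shrink as $\eta$ decreases, because $\{\Delta\ge\eta_i\}\subseteq\{\Delta\ge\eta\}$, and they shrink as $\delta$ decreases. Consequently
\[
g\in B(g,\eta,\delta^*)\subseteq B(f_1,\varepsilon_1,\delta_1)\cap B(f_2,\varepsilon_2,\delta_2),
\]
which is the required basis element. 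Everything beyond the shrinking lemma is bookkeeping; the only genuinely delicate points are the measurability of $\Delta$ and the continuity-from-above argument that supplies the slack $\eta$.
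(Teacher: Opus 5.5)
Your proof is correct and follows the paper's own argument. You use the same cover step, the same decreasing sets $A_k=\{\Delta(f,g,\cdot)\ge\varepsilon-1/k\}$ with continuity from above giving slack $\eta=1/k$ and $\delta'=\delta-\mu(A_k)$, and then the pointwise triangle inequality plus subadditivity. Your measurability argument for $\Delta$, and your explicit use of $\min_i\eta_i$, $\min_i\delta_i'$ with monotonicity (rather than the paper's brief ``take the intersection of the two resulting neighborhoods''), supply details the paper leaves implicit.
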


\begin{proof}
We verify the two fundamental properties of a topological basis.
First, we verify the covering property. 
For any $f \in \mathfrak{S}$, it holds that $f \in B(f, \varepsilon, \delta)$ for any $\varepsilon, \delta > 0$. 
Since $\Delta(f, f, x_0) = 0$ for all $x_0 \in \mcX$, the set where the error exceeds $\varepsilon$ is empty, and $\mu(\emptyset) = 0 < \delta$. 
Thus, $\mathfrak{S} = \bigcup_{B \in \mathcal{B}} B$.

Next, we verify the intersection property. Let $h \in B(f, \varepsilon_1, \delta_1) \cap B(g, \varepsilon_2, \delta_2)$. 
We show there exists a neighborhood $B(h, \alpha, \beta)$ such that $B(h, \alpha, \beta) \subseteq B(f, \varepsilon_1, \delta_1)$. 
The inclusion for $g$ follows by symmetry, and taking the intersection of the two resulting neighborhoods yields the required basis element.

Since $h \in B(f, \varepsilon_1, \delta_1)$, let $E_h = \{ x_0 \in \mcX : \Delta(f, h, x_0) \ge \varepsilon_1 \}$. By definition, $\mu(E_h) < \delta_1$. We utilize the continuity of measure to establish a safety margin. 
Consider the sets $A_n = \{ x_0 \in \mcX : \Delta(f, h, x_0) \ge \varepsilon_1 - 1/n \}$. Note that $\bigcap_{n=1}^\infty A_n = E_h$. Since $\mu(E_h) < \delta_1$, there exists sufficiently large $N$ such that $\mu(A_N) < \delta_1$. We define the slack parameters $\alpha \coloneqq 1/N$ and $\beta \coloneqq \delta_1 - \mu(A_N)$.

Consider any $k \in B(h, \alpha, \beta)$. Let $E_k = \{ x_0 \in \mcX : \Delta(h, k, x_0) \ge \alpha \}$ be the exception set for $k$, noting that $\mu(E_k) < \beta$. 
We apply the triangle inequality: $\Delta(f, k, x_0) \le \Delta(f, h, x_0) + \Delta(h, k, x_0)$. For any point $x_0 \notin (A_N \cup E_k)$, we have
\[
    \Delta(f, k, x_0) < \left(\varepsilon_1 - \frac{1}{N}\right) + \frac{1}{N} = \varepsilon_1.
\]
Thus, the new exception set for $k$ relative to $f$ is contained in $A_N \cup E_k$. By the sub-additivity of the measure,
\[
    \mu( \{ x_0 \in \mcX : \Delta(f, k, x_0) \ge \varepsilon_1 \} ) \le \mu(A_N) + \mu(E_k) < \mu(A_N) + (\delta_1 - \mu(A_N)) = \delta_1.
\]
Therefore, $k \in B(f, \varepsilon_1, \delta_1)$, which completes the proof.
\end{proof}

\begin{remark}[Connection to the Ky Fan metric]
Our definition of $\varepsilon$-$\delta$ closeness (Eq.~\ref{eq:eps_volume_error}) is topologically equivalent to the topology induced by the \textbf{Ky Fan metric} \citep{dudley2002real} on the space of trajectories. 
Specifically, the distance between two flows $\varphi$ and $\hat{\varphi}$ can be defined as:
\[ 
d_{KF}(\varphi, \hat{\varphi}) = \inf \big\{ \varepsilon > 0 \;\big|\; \mu(\{x \in \mcX : \sup_{t \ge 0} \|\varphi_t(x) - \hat{\varphi}_t(x)\| \ge \varepsilon \}) < \varepsilon \big\}. 
\]
Our guarantee essentially states that $d_{KF}(\varphi, \hat{\varphi})$ can be made arbitrarily small.
\end{remark}

%==============================================================================
\section{Proof of Theorem~\ref{thm:uap_fp}}\label{app:proof_fp}

\begin{lemma}[Trajectory bounds inside basins]\label{lem:traj_bound}
Let $\mathcal{A}$ be a hyperbolic attractor (stable fixed point) with spectral gap $\lambda > 0$ (slowest eigenvalue decay rate, i.e., $\lambda = \min_i |\mathrm{Re}(\mu_i)|$ for eigenvalues $\mu_i$ of $Df(\mathcal{A})$).
Let $r > 0$ be a radius such that linearization is valid in $B_r(\mathcal{A})$.
Let $\mcV \subset \boa(\mathcal{A})$ with $\inf_{x \in \mcV} \dist(x, \partial \boa(\mathcal{A})) \geq \eta > 0$.

There exists $\nu_0 = \nu_0(\eta, \lambda, L, r) > 0$ such that for all $\nu < \nu_0$:
if $\|f - \hat{f}\|_{C^1} < \nu$, then $\hat{f}$ has a hyperbolic attractor $\hat{\mathcal{A}}$ with $\mcV \subset \boa(\hat{\mathcal{A}})$, and for all $x_0 \in \mcV$:
\[
\sup_{t \geq 0} \|\varphi(t, x_0) - \hat{\varphi}(t, x_0)\| \leq \frac{K\nu}{\lambda},
\]
where $K = K(\eta, f) = \lambda C K_1 + C$ with $K_1 = (e^{LT^*} - 1)/L$, $T^* = T^*(\eta)$ is the uniform transient time, $L = \Lip(f)$, and $C$ bounds the flow Jacobian (fundamental matrix) in the linearization regime.
\end{lemma}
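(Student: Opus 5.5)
The plan is to split every trajectory from $\mcV$ into a transient phase $[0,T^*]$ and an asymptotic phase $[T^*,\infty)$, and to bound the error on each phase separately. The estimates should be uniform over $x_0 \in \mcV$.

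\textbf{Step 1 (uniform transient time).} First I would show that $T^* = T^*(\eta)<\infty$ exists with $\varphi(t,x_0)\in B_{r/2}(\mathcal{A})$ for all $x_0\in\mcV$ and $t\ge T^*$. Here I take $\mcV$ to be relatively closed in $\mcX$, shrinking it slightly if needed. Since $\mcX$ is bounded and $\dist(\mcV,\partial\boa(\mathcal{A}))\ge\eta$, the closure $\overline{\mcV}$ is a compact subset of the open basin. Each point of $\overline{\mcV}$ enters the open ball $B_{r/2}(\mathcal{A})$ in finite time. Because $\mathcal{A}$ is hyperbolic, a slightly smaller ball is forward invariant (a Lyapunov sublevel set). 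Continuity of the flow then gives a finite subcover, and the maximum entrance time over the subcover is $T^*$.

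\textbf{Step 2 (transient error).} On $[0,T^*]$ I would apply Gr\"onwall to $e(t)=\varphi(t,x_0)-\hat\varphi(t,x_0)$. Using $\|\dot e\|\le L\|e\|+\nu$ gives $\|e(t)\|\le \nu K_1$ with $K_1=(e^{LT^*}-1)/L$. Choosing $\nu_0$ so small that $\nu K_1<r/4$ guarantees $\hat\varphi(T^*,x_0)\in B_{3r/4}(\mathcal{A})$.

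\textbf{Step 3 (perturbed attractor and tail).} By the implicit function theorem, or by Theorem~\ref{thm:palis_smale}, for $\nu$ small $\hat f$ has a hyperbolic sink $\hat{\mathcal{A}}$ with $\|\hat{\mathcal{A}}-\mathcal{A}\|=O(\nu)$. On $B_r$ the Jacobian $D\hat f$ is $\nu$-close to $Df$. Hence the linearization bound $\|D\varphi_{s,t}\|\le Ce^{-\lambda(t-s)}$ persists for both flows, possibly with a slightly degraded rate $\lambda'$ close to $\lambda$; I absorb the difference into $C$ by taking $r$ and $\nu_0$ small. This makes $B_r$ forward invariant under $\hat\varphi$ and puts $\mcV$ inside $\boa(\hat{\mathcal{A}})$.

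For $t\ge T^*$ I would use the Alekseev/variation-of-constants identity
\[
\varphi(t,x_0)-\hat\varphi(t,x_0)=D\varphi_{T^*,t}\,e(T^*)+\int_{T^*}^t D\varphi_{s,t}\,\big(f-\hat f\big)(\hat\varphi(s,x_0))\,ds .
\]
I evaluate the Jacobian along the segment joining the two trajectories, which stays in the ball, so the contraction bound applies. This gives
\[
\|e(t)\|\le C\nu K_1 + C\nu/\lambda = \frac{\nu}{\lambda}\,(\lambda C K_1+C)=\frac{K\nu}{\lambda}.
\]
The same bound holds on $[0,T^*]$, since $\nu K_1\le K\nu/\lambda$ because $C\ge1$.

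\textbf{Main obstacle.} I expect Step 1 to be the hardest, together with the uniform persistence of the contraction estimate in Step 3. Near $\partial\boa$ the transient time blows up, which is exactly why $\eta>0$ is needed. The delicate part is compactness: $\mcV$ must be treated via its closure and kept away from the saddle stable manifolds. The nonlinear remainder in the linearized region must also be controlled by the choice of $r$. I would handle that remainder by picking $r$ so that $\|Df(x)-Df(\mathcal{A})\|\le\lambda/4$ on $B_r$ and then working with the rate $\lambda/2$, redefining $\lambda$ accordingly.
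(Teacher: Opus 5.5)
Your proposal is correct and follows the paper's proof essentially step for step. The shared steps are a compactness argument for the uniform transient time $T^*$, Gr\"onwall on $[0,T^*]$ giving error $K_1\nu$, persistence of a hyperbolic sink $\hat{\mathcal{A}}$ with a trapping neighborhood for $\hat f$ (the paper gets this from the quadratic Lyapunov function $V(x)=(x-\mathcal{A})^\top P(x-\mathcal{A})$, you from $C^1$-closeness of $D\hat f$ to $Df$ on $B_r$), and variation of constants on $[T^*,\infty)$ yielding $CK_1\nu + C\nu/\lambda = K\nu/\lambda$. One correction: a rate loss from $\lambda$ to $\lambda'<\lambda$ cannot be absorbed into $C$ uniformly in $t\ge 0$, but you do not need it, since only the unperturbed Jacobian $D\varphi$ enters your Alekseev formula (any positive rate for $\hat f$ suffices for trapping and basin inclusion) and the lemma's linearization hypothesis already supplies the rate-$\lambda$ bound with constant $C$, so halving $\lambda$ is unnecessary too.
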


\begin{proof}
The proof proceeds in two stages.

\textbf{Stage 1: Transient phase $[0, T^*]$.}
Since $\mcV$ is bounded away from the separatrix by $\eta > 0$, trajectories from $\mcV$ have uniform convergence time.
To see this, note that $\bar{\mcV}$ (the closure of $\mcV$) is compact, and the convergence time function $\tau(x) = \inf\{t : \dist(\varphi(t,x), \mathcal{A}) < r/2\}$ is upper semicontinuous on the basin of attraction.
Since $\bar{\mcV} \subset \boa(\mathcal{A})$ is compact and bounded away from $\partial \boa(\mathcal{A})$, the supremum $T^* = \sup_{x \in \bar{\mcV}} \tau(x)$ is finite~\citep[Thm.~2.1]{chicone2006ode}.

For $t \in [0, T^*]$, by Gr\"onwall's inequality:
\[
\|\varphi(t, x_0) - \hat{\varphi}(t, x_0)\| \leq \nu \cdot \frac{e^{Lt} - 1}{L} \leq K_1 \nu,
\]
where $L = \Lip(f)$ and $K_1 = (e^{LT^*} - 1)/L$.
Choosing $\nu_0 \leq r/(2K_1)$ ensures the perturbed trajectory $\hat{\varphi}(T^*, x_0)$ is within distance $r$ of $\hat{\mathcal{A}}$ (since $\|\mathcal{A} - \hat{\mathcal{A}}\| \leq C_1 \nu$ for some $C_1 > 0$ by structural stability).

\textbf{Stage 2: Asymptotic phase $[T^*, \infty)$.}
Both trajectories start within the linearization neighborhood at time $T^*$.
For $x \in B_r(\mathcal{A})$, the exponential contraction satisfies $\|D\varphi_{s,t}(x)\| \leq C e^{-\lambda(t-s)}$ for $t > s$.

\textbf{Claim:} 
Trajectories remain in the linearization neighborhood for all $t \ge T^*$. 
Because the Jacobian $A = Df(\mathcal{A})$ has eigenvalues with strictly negative real parts, there exists a symmetric positive definite matrix $P$ solving the continuous Lyapunov equation $A^\top P + P A = -I$. 
We define the adapted Lyapunov function $V(x) = (x-\mathcal{A})^\top P (x-\mathcal{A})$, whose sublevel sets form invariant ellipsoids around $\mathcal{A}$. 
Along the unperturbed flow $f$, the derivative satisfies $\dot{V}_f(x) \le -c_1 \|x-\mathcal{A}\|^2$ inside $B_r$ for some constant $c_1 > 0$.
 For the approximated flow $\hat{f}$, the derivative is:
$$ \dot{V}_{\hat{f}}(x) = \nabla V(x) \cdot \hat{f}(x) = \dot{V}_f(x) + \nabla V(x) \cdot (\hat{f}(x) - f(x)). $$
Since $\|\nabla V(x)\| \le 2\|P\|\|x-\mathcal{A}\|$ and the uniform perturbation is bounded by $\|f - \hat{f}\|_{C^0} < \nu$, we obtain:
$$ \dot{V}_{\hat{f}}(x) \le -c_1 \|x-\mathcal{A}\|^2 + 2\|P\|\|x-\mathcal{A}\| \nu. $$
This derivative is strictly negative for $\|x-\mathcal{A}\| > 2\|P\|\nu / c_1$. 
Thus, for $\nu$ sufficiently small, $\dot{V}_{\hat{f}} < 0$ on the boundary of an ellipsoidal sublevel set of $V$ contained entirely within $B_r$, ensuring that any trajectory entering this set is permanently trapped.

By the variation of constants formula~\citep{vanhandel2007filtering}, applied with the linearized flow's fundamental matrix bounded in operator norm by $C$:
\[
\|\varphi(t, x_0) - \hat{\varphi}(t, x_0)\| \leq C K_1 \nu \cdot e^{-\lambda(t - T^*)} + \int_{T^*}^t C e^{-\lambda(t-s)} \nu \, ds \leq C K_1 \nu + \frac{C\nu}{\lambda}.
\]
Setting $K = \lambda C K_1 + C$ gives the bound $K\nu/\lambda$.

\textit{Explicit $\nu_0$:} Set $\nu_0 = \min(r/(2K_1), \lambda r/(2C), \delta_{\mathrm{ss}})$ where $\delta_{\mathrm{ss}}$ is the structural stability radius.
\end{proof}

\begin{lemma}[Basin error volume]\label{lem:basin_error}
For any $\delta > 0$, there exists $\eta_{\mathrm{vol}} > 0$ such that $\|f - \hat{f}\|_{C^1} < \eta_{\mathrm{vol}}$ implies:
\(
\mu(\mcE_{\mathrm{basin}}) < \delta,
\)
where $\mcE_{\mathrm{basin}} = \bigcup_i (\boa(\mathcal{A}_i^f) \setminus \boa(\mathcal{A}_i^{\hat{f}}))$.
\end{lemma}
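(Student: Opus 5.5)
We prove Lemma~\ref{lem:basin_error} by trapping the basin error set inside a thin neighborhood of the separatrix set $S \coloneqq \bigcup_i \partial \boa(\mathcal{A}_i^f)$. Since $f \in \mcF_{\mathrm{FP}}$ is Morse-Smale, $S$ is a finite union of stable manifolds of saddles (Theorem~\ref{thm:stable_manifold}), each of dimension $<n$. Hence $S$ is Lebesgue-null.

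\textbf{Step 1: choose the collar.} The sets $N_\eta(S)$ decrease, as $\eta \downarrow 0$, to $\overline{S}$. We check that $\overline{S} \subset S \cup \{\text{saddles}\} \cup \partial\mcX$, all of which have measure zero. Continuity of measure from above (finite measure, since $\mcX$ is bounded) then lets us fix $\eta>0$ with $\mu(N_\eta(S)) < \delta$.

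\textbf{Step 2: basins persist off the collar.} Set $\mcV_i \coloneqq \boa(\mathcal{A}_i^f) \setminus N_\eta(S)$. Each $\mcV_i$ satisfies the hypothesis of Lemma~\ref{lem:traj_bound}: it lies in the basin of $\mathcal{A}_i^f$ and is at distance $\geq \eta$ from $\partial\boa(\mathcal{A}_i^f)$. The lemma supplies $\nu_0^{(i)}>0$ such that $\|f-\hat f\|_{C^1}<\nu_0^{(i)}$ gives $\mcV_i \subset \boa(\mathcal{A}_i^{\hat f})$, where $\mathcal{A}_i^{\hat f}$ is the continuation of $\mathcal{A}_i^f$. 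This continuation is well defined and unique by Theorem~\ref{thm:palis_smale}, with $h$ close to the identity. Let
\[
\eta_{\mathrm{vol}} \coloneqq \min_i \nu_0^{(i)},
\]
a minimum over the finitely many attractors. Then every $x_0 \in \boa(\mathcal{A}_i^f)\setminus\boa(\mathcal{A}_i^{\hat f})$ lies in $N_\eta(S)$. Therefore $\mcE_{\mathrm{basin}} \subset N_\eta(S)$, and $\mu(\mcE_{\mathrm{basin}})<\delta$.

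\textbf{Main obstacle: the uniform transient time.} The key input is that $T^*$ is finite on the closure of $\mcV_i$. Since $\mcV_i$ is only bounded away from $S$, not from $\partial\mcX$, we use the strictly inward-pointing assumption and Lemma~\ref{lem:forward_invariance} to work on the compact set $\overline{\mcX}$. There $\overline{\mcV_i}$ is a compact subset of the open basin, and the entry time into $B_{r/2}(\mathcal{A}_i^f)$ is bounded by the following covering argument:
\begin{itemize}
\item each point of $\overline{\mcV_i}$ enters $B_{r/2}$ at some finite time;
\item by continuity of the flow, this entry persists on a neighborhood of that point;
\item finitely many such neighborhoods cover $\overline{\mcV_i}$, so the entry time is uniformly bounded.
\end{itemize}

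The Gr\"onwall estimate on $[0,T^*]$ then places $\hat\varphi(T^*,x_0)$ in the trapping ellipsoid around $\mathcal{A}_i^{\hat f}$. That ellipsoid is forward invariant for $\hat f$ by the Lyapunov argument in the proof of Lemma~\ref{lem:traj_bound}. This yields basin membership directly, without needing any quantitative control of $h$.
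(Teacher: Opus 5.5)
Your proof is correct, but it reaches the conclusion by a different mechanism than the paper. The paper argues globally. Strong structural stability (Theorem~\ref{thm:palis_smale}) gives an orbit equivalence $h$ with $\|h-\mathrm{id}\|_{C^0}\to 0$, which implicitly carries $\boa(\mathcal{A}_i^f)$ onto $\boa(\mathcal{A}_i^{\hat f})$. A point that switches basins must therefore lie within $\|h-\mathrm{id}\|_{C^0}$ of the null set $S$, and continuity of measure finishes the argument.

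You never use the conjugacy. You fix the collar $N_\eta(S)$ first. You then use Lemma~\ref{lem:traj_bound} (uniform transient time on compacts away from $S$, Gr\"onwall up to $T^*$, Lyapunov trapping at the continued sink) to show that every point outside the collar is captured by the continued sink of $\hat f$.

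The paper's route is shorter and works verbatim for any Morse--Smale target, including attracting limit cycles. That is how the lemma is reused ``adapted for limit cycles'' in the proof of Theorem~\ref{thm:uap_lc}. But it rests on the deep conjugacy theorem and yields no explicit $\eta_{\mathrm{vol}}$.

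Your route has complementary strengths and costs:
\begin{itemize}
\item It is elementary. The continuation of hyperbolic sinks needs only the implicit function theorem, so you do not actually need Theorem~\ref{thm:palis_smale}.
\item It gives an explicit $\eta_{\mathrm{vol}}$ in terms of $T^*(\eta)$, $L$ and $r$.
\item It shares the set $\mcV$ and the lemma with Step~3 of the proof of Theorem~\ref{thm:uap_fp}, so one construction controls both basin and trajectory error.
\item It is tied to fixed-point attractors; cycles would need a tubular trapping region.
\item As written, $\eta_{\mathrm{vol}} \lesssim e^{-LT^*(\eta)}$, which degrades quickly as the collar is thinned, since $T^*(\eta)\to\infty$.
\end{itemize}

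Three details to tighten:
\begin{itemize}
\item Forward invariance of the $\hat f$-ellipsoid does not by itself give convergence to $\mathcal{A}_i^{\hat f}$. Use the $C^1$ part of the closeness so that the same quadratic form, recentered at $\mathcal{A}_i^{\hat f}$, is a strict Lyapunov function for $\hat f$ on the ellipsoid.
\item Cap $\eta_{\mathrm{vol}}$ by the inward-pointing constant $\gamma$, so that $\hat\varphi$ stays in $\mcX$ during the Gr\"onwall comparison.
\item $S$ can contain sources as well as saddle stable manifolds. This does not affect Step~1: $S$ is a finite union of closed basin boundaries lying in the stable manifolds of the non-sink equilibria together with $\partial\mcX$, so it is closed and null.
\end{itemize}
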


\begin{proof}
By structural stability, $\|f - \hat{f}\|_{C^1} < \eta$ implies topological equivalence via homeomorphism $h$ with $\|h - \mathrm{id}\|_{C^0} \to 0$ as $\eta \to 0$.

The basin error set is contained in the $\|h - \mathrm{id}\|_{C^0}$-neighborhood of the separatrices $S = \bigcup_i \partial \boa(\mathcal{A}_i)$.
Since separatrices are stable manifolds of saddles with codimension $\geq 1$, they have Lebesgue measure zero.
By continuity of measure, $\mu(\mcE_{\mathrm{basin}}) \to 0$ as $\eta \to 0$.
\end{proof}

\begin{lemma}[Forward invariance under approximation]\label{lem:forward_invariance}
Let $f \in \mathfrak{X}^1(\mcX)$ be strictly inward-pointing at $\partial \mcX$ with constant $\gamma > 0$ (Definition~\ref{def:inward_pointing}).
If $\hat{f}: \mcX \to \reals^n$ satisfies $\|f - \hat{f}\|_{C^0} < \gamma$, then $\hat{f} \in \mathfrak{X}^1(\mcX)$, i.e., trajectories of $\hat{f}$ remain in $\mcX$ for all $t \geq 0$.
\end{lemma}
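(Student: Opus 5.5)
The argument has two steps: a pointwise sign estimate on the boundary, then a first-exit argument that turns it into invariance. Both rely only on Definition~\ref{def:inward_pointing} and the $C^0$ bound. For every $x \in \partial\mcX$,
\[
\hat f(x)\cdot\nu(x) = f(x)\cdot\nu(x) + (\hat f(x)-f(x))\cdot\nu(x) < -\gamma + \|f-\hat f\|_{C^0} < 0 .
\]
This uses Cauchy--Schwarz and $\|\nu\|=1$. Since $\partial\mcX$ is compact (boundary of a bounded set) and the map $x\mapsto \hat f(x)\cdot\nu(x)$ is continuous, the supremum is attained. Hence there is a margin $\gamma' \coloneqq \gamma - \|f-\hat f\|_{C^0} > 0$ with $\hat f\cdot\nu \le -\gamma'$ on $\partial\mcX$. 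I will assume, as the definition implicitly does, that $\hat f$ extends continuously (indeed $C^1$) to $\overline{\mcX}$. This holds for neural network fields, which are defined on all of $\reals^n$.

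For the invariance step, take $x_0 \in \mcX$ and let $[0,t^+)$ be the maximal forward existence interval of $\hat\varphi(\cdot,x_0)$ in $\mcX$; local existence and uniqueness follow from $\hat f \in C^1$. Suppose the trajectory leaves $\mcX$. Let $\tau = \inf\{t>0 : \hat\varphi(t,x_0)\notin\mcX\}$; this is a first exit through the boundary. Then $y=\hat\varphi(\tau,x_0)\in\partial\mcX$ and $\hat\varphi(t,x_0)\in\mcX$ for $t<\tau$.

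Near $y$, write $\mcX$ locally as $\{\rho<0\}$ with $\rho$ a smooth defining function, $\nabla\rho = \|\nabla\rho\|\,\nu$ on $\partial\mcX$. The function $g(t)=\rho(\hat\varphi(t,x_0))$ then satisfies $g(t)<0$ for $t<\tau$, $g(\tau)=0$, and $g'(\tau) = \nabla\rho(y)\cdot\hat f(y) < 0$. By the mean value theorem, $g(t) = g'(\tau)(t-\tau) + o(t-\tau) > 0$ for $t$ slightly less than $\tau$, which contradicts $g(t)<0$. So no exit occurs.

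Finally, the trajectory stays in the bounded set $\mcX$, so by the standard extension theorem, which uses boundedness of $\hat f$ on the compact $\overline{\mcX}$, the solution cannot blow up and exists for all $t\ge 0$. Therefore $\hat f\in\mathfrak{X}^1(\mcX)$.

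The main obstacle is the boundary-crossing step: making the first-exit argument rigorous at a boundary point. This requires the smooth local defining function $\rho$, which is supplied by the smoothness of $\partial\mcX$ assumed in Definition~\ref{def:inward_pointing}, and the continuity of $\hat f$ up to the boundary. If one prefers to avoid defining functions, an alternative is to invoke the Nagumo/Bony--Brezis tangency criterion for forward invariance of the closed set $\overline{\mcX}$. The strict inequality then upgrades invariance of $\overline{\mcX}$ to invariance of the open set $\mcX$, because trajectories starting on $\partial\mcX$ immediately enter the interior.
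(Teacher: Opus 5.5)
Your proposal is correct and follows the same route as the paper: the identical pointwise estimate $\hat f\cdot\nu < -\gamma + \|f-\hat f\|_{C^0} < 0$ on $\partial\mcX$, followed by a no-exit argument. The paper dispatches the second step by citing the flow box theorem. Your first-exit argument with a local defining function, together with the global-existence remark and the explicit assumption that $\hat f$ is continuous up to $\partial\mcX$, spells out that same step in more detail.
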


\begin{proof}
Let $\nu(x)$ denote the outward unit normal at $x \in \partial \mcX$.
By the strictly inward-pointing condition, $f(x) \cdot \nu(x) < -\gamma$ for all $x \in \partial \mcX$.

For $\hat{f}$ with $\|f - \hat{f}\|_{C^0} < \gamma$:
\[
\hat{f}(x) \cdot \nu(x) = f(x) \cdot \nu(x) + (\hat{f}(x) - f(x)) \cdot \nu(x) < -\gamma + \|\hat{f} - f\|_{C^0} < 0.
\]
Thus $\hat{f}$ also points strictly inward at $\partial \mcX$.
By the flow box theorem, trajectories starting in $\mcX$ cannot exit through $\partial \mcX$, so $\hat{\varphi}(t, x_0) \in \mcX$ for all $t \geq 0$ and $x_0 \in \mcX$.
\end{proof}

\begin{proof}[Full proof of Theorem~\ref{thm:uap_fp}]
Given $\varepsilon, \delta > 0$:

\textbf{Step 1 (spatial radius):} Choose a spatial radius $\eta > 0$ small enough that $\mu(N_\eta(S)) < \delta/2$. This is possible because $S$ has measure zero (Theorem~\ref{thm:stable_manifold}) and $\mu$ is continuous from above.

\textbf{Step 2 (perturbation budget for basin error):} By Lemma~\ref{lem:basin_error}, there exists a perturbation budget $\eta_{\mathrm{vol}} = \eta_{\mathrm{vol}}(\eta) > 0$ such that $\|f - \hat{f}\|_{C^1} < \eta_{\mathrm{vol}}$ implies $\mcE_{\mathrm{basin}} \subset N_\eta(S)$.

\textbf{Step 3 (perturbation budget for trajectory error):} Define $\mcV \coloneqq \mcX \setminus N_\eta(S)$, depending on $f, \eta$ alone. By Lemma~\ref{lem:traj_bound}, the constants $K_1 = (e^{LT^*}-1)/L$ and $K = K(\eta, f)$ are determined a priori; choose $\eta_{\mathrm{traj}} = \lambda\varepsilon/K$ so the trajectory error is $< \varepsilon$ on $\mcV$.

\textbf{Step 4 (combined budget):} Let $\gamma > 0$ be the strict inward-pointing constant. Set
\[
\eta_0 \coloneqq \min(\eta_{\mathrm{vol}}, \eta_{\mathrm{traj}}, \gamma).
\]
All three quantities are now perturbation budgets in the $C^1$ norm.

\textbf{Step 5 (apply UAP):} By Theorem~\ref{thm:uap_vf}, choose $\hat{f} \in \hat{\mcF}$ with $\|f - \hat{f}\|_{C^1} < \eta_0$.

Since $\|f - \hat{f}\|_{C^0} < \gamma$, forward invariance holds by Lemma~\ref{lem:forward_invariance}.
By Step~2, $\mcE_{\mathrm{basin}} \subset N_\eta(S)$, so for the error set $\mu\bigl(\{x_0 : \sup_t \|\varphi - \hat{\varphi}\| > \varepsilon\}\bigr) \le \mu(N_\eta(S)) < \delta/2 < \delta$.
\end{proof}

%==============================================================================
\section{Proof of Theorem~\ref{thm:uap_lc}}\label{app:proof_lc}

% we prove it for multiplicative and addiitive architectures

\begin{lemma}[Structural stability of hyperbolic limit cycles]\label{lem:lc_stability}
Let $\gamma$ be a hyperbolic periodic orbit with period $T$.
There exist $L_H, L_T, \delta_0 > 0$ such that $\|f - \hat{f}\|_{C^1} < \delta_0$ implies:
\begin{enumerate}[label=(\roman*)]
\item $\hat{f}$ has a unique hyperbolic periodic orbit $\hat{\gamma}$ near $\gamma$
\item $d_H(\gamma, \hat{\gamma}) \leq L_H \|f - \hat{f}\|_{C^1}$
\item $|\hat{T} - T| \leq L_T \|f - \hat{f}\|_{C^1}$
\end{enumerate}
\end{lemma}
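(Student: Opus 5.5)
The plan is to reduce everything to the Poincaré return map and apply the implicit function theorem to a fixed-point equation. Fix a point $p \in \gamma$ and a local cross-section $\Sigma$ through $p$, taken to be a piece of the affine hyperplane $p + f(p)^\perp$. Since $f(p) \neq 0$, any $\hat f$ with $\|f-\hat f\|_{C^0}$ small is still transverse to $\Sigma$ on a fixed neighborhood $U \subset \Sigma$ of $p$. Flows depend $C^1$ on the vector field in the $C^1$ topology: the variational equation for $D\hat\varphi$ depends continuously on $(\hat f, D\hat f)$, and Gr\"onwall on the compact time window $[0, 2T]$ gives $\|\hat\varphi - \varphi\|_{C^1([0,2T]\times U)} \le C\|f-\hat f\|_{C^1}$.

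Next we handle the first-return time. Apply the implicit function theorem to $G(x,\tau,\hat f) = \langle \hat\varphi(\tau,x) - p, f(p)\rangle$ near $(p,T,f)$. Its $\tau$-derivative there is $\|f(p)\|^2 \neq 0$, so the return time $\hat\tau(x)$ and the Poincaré map $\hat P(x) = \hat\varphi(\hat\tau(x),x)$ are defined on $U$. Both are $C^1$-close to $\tau$ and $P$, with $\|\hat P - P\|_{C^1(U)} + \|\hat\tau - \tau\|_{C^1(U)} \le C'\|f-\hat f\|_{C^1}$. The constant $C'$ is Lipschitz in the perturbation, because the implicit function theorem is applied with the perturbation entering as a $C^1$ parameter.

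Periodic orbits near $\gamma$ correspond to fixed points of $\hat P$ near $p$, i.e. zeros of $F(x,\hat f) = \hat P(x) - x$ on $\Sigma$. By hyperbolicity (Definition~\ref{def:hyperbolic_lc}), $DP(p) - I$ is invertible, since $1$ is not an eigenvalue. Invertibility persists for $\hat P$ once $\|f-\hat f\|_{C^1} < \delta_0$, so a quantitative inverse function theorem gives a unique fixed point $\hat p$ in a fixed ball around $p$. It satisfies
\[
\|\hat p - p\| \le \|(DP(p)-I)^{-1}\|\cdot 2\|\hat P - P\|_{C^0} \le C''\|f-\hat f\|_{C^1}.
\]
Its multipliers are eigenvalues of $D\hat P(\hat p)$, which is close to $DP(p)$. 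Eigenvalues are continuous and the spectrum of $DP(p)$ stays a positive distance from the unit circle, so for small $\delta_0$ the orbit remains off the unit circle. This proves (i), hyperbolicity and local uniqueness included. Uniqueness here means within the tubular neighborhood swept out by $U$ under the flow, which is the intended meaning of "near $\gamma$".

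For (iii), the new period is $\hat T = \hat\tau(\hat p)$, and
\[
|\hat T - T| \le |\hat\tau(\hat p) - \tau(\hat p)| + |\tau(\hat p) - \tau(p)| \le C'\|f-\hat f\|_{C^1} + \|D\tau\|\,C''\|f-\hat f\|_{C^1},
\]
which defines $L_T$. For (ii), every point of $\hat\gamma$ is $\hat\varphi(t,\hat p)$ with $t \in [0,\hat T]$, and it is compared with $\varphi(t,p)$. Gr\"onwall on $[0,2T]$ gives
\[
\|\hat\varphi(t,\hat p) - \varphi(t,p)\| \le e^{2LT}\big(\|\hat p - p\| + 2T\|f-\hat f\|_{C^0}\big).
\]
The mismatch in the parametrization interval, $|\hat T - T|$, adds at most $\sup\|f\|\cdot L_T\|f-\hat f\|_{C^1}$. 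Symmetrizing gives the Hausdorff bound with an explicit $L_H$.

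The main obstacle is step two: making the dependence of the return map and return time on $\hat f$ genuinely Lipschitz in the $C^1$ norm, with a neighborhood $U$ and constants uniform over all $\hat f$ in the $\delta_0$-ball. The approach is to treat the perturbation $g = \hat f - f$ as a parameter in the Banach space $C^1$, where the map $(x,\tau,g)\mapsto \varphi_{f+g}(\tau,x)$ is $C^1$. A uniform implicit function theorem then yields all constants at once, with no separate argument for each piece.
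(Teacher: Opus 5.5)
Your argument is correct, but it takes a different route from the paper. The paper's proof is essentially two citations. It treats $\gamma$ as a compact normally hyperbolic invariant manifold and invokes Fenichel's persistence theorem for (i)--(ii). It obtains (iii) by citing Fr\'echet differentiability of the period functional for hyperbolic cycles.

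You instead work directly with the Poincar\'e map. The return time comes from the implicit function theorem on the hyperplane section. The perturbed orbit is the unique fixed point of $\hat P$ because $DP(p)-I$ is invertible, and hyperbolicity follows from continuity of the multipliers. Both $\hat T=\hat\tau(\hat p)$ and the Hausdorff bound then follow from $C^0$ Gr\"onwall estimates.

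What your route buys:
\begin{itemize}
\item It is self-contained and elementary.
\item It gives explicit constants, through $\|(DP(p)-I)^{-1}\|$, $\|D\tau\|$ and $e^{2LT}$.
\item It gets the period bound from the same computation.
\end{itemize}
Your Banach-parameter implicit function theorem is exactly the mechanism behind the differentiability of the period that the paper only cites. The paper's route buys generality: the same Fenichel theorem (Theorem~\ref{thm:fenichel}) is reused for the higher-dimensional invariant manifolds in Theorem~\ref{thm:uap_ca}, where no single Poincar\'e section is available. The price is that the period estimate has to be imported as a separate fact.

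One claim in your write-up is false as stated and should be weakened. For a merely $C^1$ field $f$, which is the paper's setting, the quantities $\|\hat\varphi-\varphi\|_{C^1([0,2T]\times U)}$, $\|\hat P-P\|_{C^1(U)}$ and $\|\hat\tau-\tau\|_{C^1(U)}$ are not $O(\|f-\hat f\|_{C^1})$. The variational equations differ by $Df(\hat\varphi)-Df(\varphi)$, which is controlled only by the modulus of continuity of $Df$. For example, a constant drift of size $\epsilon$ in a direction along which $Df$ is only H\"older-$\tfrac12$ produces derivative discrepancies of order $\sqrt{\epsilon}$. Likewise, the Banach-parameter implicit function theorem makes values depend $C^1$ on $g$; it does not make $x$-derivatives Lipschitz in $g$.

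This does not damage your proof. Every Lipschitz bound you actually use is a $C^0$ bound: $\|\hat P-P\|_{C^0}$ in the fixed-point estimate, $|\hat\tau(\hat p)-\tau(\hat p)|$ in (iii), and Gr\"onwall in (ii). The derivative-level facts need only $\|D\hat P-DP\|\to 0$ uniformly over the $\delta_0$-ball. Those facts are uniform invertibility of $D\hat P-I$ on a fixed ball and multipliers staying off the unit circle. So the ``main obstacle'' you flag disappears once you separate these two roles.
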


\begin{proof}
By Fenichel's persistence theorem for normally hyperbolic invariant manifolds~\citep{fenichel1971persistence}.
The period functional is Fr\'echet differentiable for hyperbolic cycles~\citep{chicone2006ode}, hence Lipschitz continuous.
\end{proof}

\subsection{With multiplicative correction}

\begin{lemma}[Period correction]\label{lem:period_correction}
Let $\hat{f}$ have hyperbolic limit cycle $\hat{\gamma}$ with period $\hat{T}$.
For target period $T$ with $|\hat{T} - T|$ small, there exists $\alpha^* = (\hat{T} - T)/T$ such that:
\[
\hat{f}_{\alpha^*} \coloneqq (1 + \alpha^* \psi) \hat{f}
\]
has periodic orbit $\hat{\gamma}$ with period exactly $T$, where $\psi$ is a bump function with $\psi \equiv 1$ on $\hat{\gamma}$.
\end{lemma}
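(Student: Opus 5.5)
Since $\psi \equiv 1$ on $\hat{\gamma}$, the modified field $\hat{f}_{\alpha} = (1+\alpha\psi)\hat{f}$ restricted to $\hat{\gamma}$ is the constant multiple $(1+\alpha)\hat{f}$. The proof is then a time-reparametrization argument, followed by the computation of $\alpha^*$.

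\emph{Invariance of $\hat{\gamma}$.} Let $\hat{x}(t)$ be the $\hat{f}$-trajectory on $\hat{\gamma}$, with $\hat{x}(0)=p$ and minimal period $\hat{T}$. For $\alpha > -1$ set $y(t) \coloneqq \hat{x}((1+\alpha)t)$. Then
\[
\dot y(t) = (1+\alpha)\hat{f}(y(t)) = (1+\alpha\psi(y(t)))\hat{f}(y(t)) = \hat{f}_\alpha(y(t)),
\]
because $y(t)\in\hat{\gamma}$ and $\psi(y(t)) = 1$. The field $\hat{f}_\alpha$ is $C^1$ whenever $\psi$ is $C^1$, so solutions are unique. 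Hence $y$ is the $\hat{f}_\alpha$-orbit through $p$. It traces exactly the set $\hat{\gamma}$, so $\hat{\gamma}$ is a periodic orbit of $\hat{f}_\alpha$. Its minimal period is $\hat{T}/(1+\alpha)$: if $y$ had a smaller period, $\hat{x}$ would too.

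\emph{Choice of $\alpha^*$.} Requiring the period to equal $T$ gives $1+\alpha^* = \hat{T}/T$, that is, $\alpha^* = (\hat{T}-T)/T$, which is exactly the value in the statement. The condition "$|\hat{T}-T|$ small" guarantees $1+\alpha^* > 0$, so the time direction is preserved and $\hat{f}_{\alpha^*}$ remains a nondegenerate $C^1$ field. It also gives $\|\hat{f}_{\alpha^*} - \hat{f}\|_{C^1} \le |\alpha^*|\,\|\psi\|_{C^1}\|\hat{f}\|_{C^1}$, which is small. This estimate follows from the product rule, with $\psi$ fixed in advance independently of $\alpha$.

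\emph{Hyperbolicity is preserved.} This is not required for the bare statement, but it is needed downstream. The $\hat{f}_{\alpha^*}$-flow on a neighborhood of $\hat{\gamma}$ has the same orbits as the $\hat{f}$-flow only where $\psi \equiv 1$. To avoid needing that, take the transverse section $\Sigma$ at $p$ and compare the linearized return maps directly. Along $\hat{\gamma}$, the variational equation of $\hat{f}_\alpha$ is
\[
\dot{\xi} = (1+\alpha)D\hat{f}\,\xi + \alpha\bigl(\nabla\psi\cdot\xi\bigr)\hat{f}.
\]
The extra term is a rank-one contribution in the flow direction $\hat{f}$. It therefore vanishes after projecting along the flow onto $T_p\Sigma$. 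Rescaling time by $(1+\alpha)$ then shows that the Floquet multipliers transverse to the flow are unchanged. Hence the return-map eigenvalues still have modulus $\neq 1$, and $\hat{\gamma}$ remains hyperbolic for $\hat{f}_{\alpha^*}$.

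I expect this last step to be the main obstacle. The projection argument must be checked with care, specifically that the rank-one term contributes only to the flow-direction component of the monodromy matrix. If that check becomes delicate, the fallback is to use an arbitrary small $\alpha^*$: the multipliers depend continuously on $\alpha$, and $|\alpha^*| \to 0$ as $|\hat{T}-T| \to 0$.
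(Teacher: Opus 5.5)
Your argument is correct and matches the paper's: on $\hat{\gamma}$ the field $(1+\alpha\psi)\hat{f}$ equals $(1+\alpha)\hat{f}$, so reparametrizing time gives period $\hat{T}/(1+\alpha)$, and setting this equal to $T$ yields $\alpha^* = (\hat{T}-T)/T$. Your optional hyperbolicity step can be done more simply, and your remark that the orbits agree ``only where $\psi \equiv 1$'' is wrong: whenever $1+\alpha\psi>0$ (automatic for $0\le\psi\le 1$, since $1+\alpha^* = \hat{T}/T>0$), the field $\hat{f}_\alpha$ is a positive multiple of $\hat{f}$ everywhere, so it has the same oriented orbits everywhere and hence exactly the same Poincar\'e return map and multipliers, which is simpler than both your variational computation and the paper's perturbative remark.
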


\begin{proof}
By time reparametrization, $(1 + \alpha \psi) \hat{f}$ preserves $\hat{\gamma}$ as a periodic orbit with period $P(\alpha) = \hat{T}/(1 + \alpha)$ (since $\psi = 1$ on $\hat{\gamma}$).
Setting $P(\alpha^*) = T$ yields $\alpha^* = (\hat{T} - T)/T$.
\end{proof}

\begin{lemma}[Simultaneous period correction]\label{lem:multi_period_correction}
Let the vector field $\hat{f}$ possess $N$ disjoint hyperbolic limit cycles $\{\hat{\gamma}_1, \dots, \hat{\gamma}_N\}$ with respective periods $\hat{\mathbf{T}} = (\hat{T}_1, \dots, \hat{T}_N)$.
Let $\{\psi_1, \dots, \psi_N\}$ be a set of smooth bump functions where each $\psi_i$ is equal to $1$ on $\hat{\gamma}_i$ and decays such that $\sup_{x \in \hat{\gamma}_k} |\psi_i(x)| < \xi$ for $i \neq k$.
Consider the parameterized family of vector fields:$$\hat{f}_{\boldsymbol{\alpha}}(x) \coloneqq \left( 1 + \sum_{i=1}^N \alpha_i \psi_i(x) \right) \hat{f}(x), \quad \boldsymbol{\alpha} \in \mathbb{R}^N.$$
For any target period vector $\mathbf{T} = (T_1, \dots, T_N)$ sufficiently close to $\hat{\mathbf{T}}$, provided the leakage $\xi$ is sufficiently small, there exists a unique parameter vector $\boldsymbol{\alpha}^*$ such that the limit cycles of $\hat{f}_{\boldsymbol{\alpha}^*}$ have periods exactly equal to $\mathbf{T}$.
\end{lemma}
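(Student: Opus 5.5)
The plan is an implicit function theorem argument for the period map $\mathbf{P}:\mathbb{R}^N \to \mathbb{R}^N$, $\boldsymbol{\alpha} \mapsto (P_1(\boldsymbol{\alpha}),\dots,P_N(\boldsymbol{\alpha}))$. Here $P_k(\boldsymbol{\alpha})$ is the period of the continuation $\hat{\gamma}_k(\boldsymbol{\alpha})$ of $\hat{\gamma}_k$ under $\hat{f}_{\boldsymbol{\alpha}}$.

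\textbf{Step 1: the continuations exist and are $C^1$ in $\boldsymbol{\alpha}$.} The map $\boldsymbol{\alpha}\mapsto \hat{f}_{\boldsymbol{\alpha}}$ is affine, and it is continuous into $C^1$ because the $\psi_i$ are smooth with bounded derivatives on the compact domain. So for $\|\boldsymbol{\alpha}\|$ small, Lemma~\ref{lem:lc_stability} gives a unique hyperbolic cycle $\hat{\gamma}_k(\boldsymbol{\alpha})$ near each $\hat{\gamma}_k$. To get differentiability, I will realize the period as an implicit solution. Fix a transverse section $\Sigma_k$ through $p_k\in\hat{\gamma}_k$ and consider $F_k(x,\tau,\boldsymbol{\alpha}) = \varphi_{\boldsymbol{\alpha}}(\tau,x)-x$ for $x\in\Sigma_k$. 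Hyperbolicity says $DP-I$ is invertible on $\Sigma_k$, so the partial derivative in $(x,\tau)$ is invertible at $(p_k,\hat{T}_k,0)$. The implicit function theorem then gives $C^1$ maps $x_k(\boldsymbol{\alpha})$ and $P_k(\boldsymbol{\alpha})$ with $P_k(0)=\hat{T}_k$.

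\textbf{Step 2: the Jacobian at $\boldsymbol{\alpha}=0$.} I will compute $\partial P_k/\partial\alpha_i$ with the standard first-order period-sensitivity formula from the adjoint (phase-response) solution $Z_k$ along $\hat{\gamma}_k$, normalized by $Z_k\cdot\hat{f}=1$. For a perturbation $\hat{f}+\epsilon g$ this formula is $\partial_\epsilon T = -\int_0^{\hat{T}_k} Z_k(t)\cdot g(\hat{\gamma}_k(t))\,dt$. Here $g=\psi_i\hat{f}$, so the derivative is $-\int_0^{\hat{T}_k}\psi_i(\hat{\gamma}_k(t))\,dt$.
\begin{itemize}
\item On the diagonal, $\psi_k\equiv1$ on $\hat{\gamma}_k$, giving $M_{kk}=-\hat{T}_k$. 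This matches the exact computation in Lemma~\ref{lem:period_correction}, where $P=\hat{T}/(1+\alpha)$.
\item Off the diagonal, $|M_{ki}|\le \xi\hat{T}_k$. (Because the perturbation is parallel to $\hat{f}$, the $Z$-weighting collapses to $1$; a cruder bound $\xi\hat{T}_k\sup\|Z_k\|\sup\|\hat{f}\|$ would also suffice.)
\end{itemize}
So if $\xi<1/(N-1)$, then $|M_{kk}|=\hat{T}_k>(N-1)\xi\hat{T}_k\ge\sum_{i\neq k}|M_{ki}|$. The Jacobian is strictly diagonally dominant and hence invertible by Lemma~\ref{lem:levy_desplanques}.

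\textbf{Step 3: conclude.} The inverse function theorem applied to $\mathbf{P}$ at $0$ gives neighborhoods $U\ni0$ and $W\ni\hat{\mathbf{T}}$ such that $\mathbf{P}:U\to W$ is a $C^1$ diffeomorphism. For $\mathbf{T}\in W$, set $\boldsymbol{\alpha}^*=\mathbf{P}^{-1}(\mathbf{T})$; it is unique in $U$. The cycles $\hat{\gamma}_k(\boldsymbol{\alpha}^*)$ then have exactly the prescribed periods, and they are still hyperbolic because of the continuity in Step 1.

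\textbf{Main obstacle.} The hardest part will be Step 2, specifically justifying the period-derivative formula. It needs the adjoint normalization and the fact that the cycle moves when $\alpha_i$ is turned on, since $\psi_i\neq 1$ on $\hat{\gamma}_k$ for $i\neq k$. I will handle this by differentiating the implicit equation $F_k=0$ from Step 1 and then projecting onto the left null vector of $D\varphi(\hat{T}_k)-I$, which is $Z_k(0)$. This removes the unknown $\partial x_k/\partial\alpha_i$ and yields the integral formula.

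A secondary subtlety is that "uniqueness" must be read as local, within $U$. The bound on "sufficiently close" depends on the size of $U$, which is controlled by the inverse of $M$ and by the modulus of continuity of $D\mathbf{P}$.
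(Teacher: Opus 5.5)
Your proposal is correct and follows the same route as the paper. The period-map Jacobian at $\boldsymbol{\alpha}=\mathbf{0}$ has entries $-\int_0^{\hat{T}_k}\psi_i(\hat{\gamma}_k(t))\,dt$, so $\xi<1/(N-1)$ gives strict diagonal dominance, Lemma~\ref{lem:levy_desplanques} gives invertibility, and the inverse function theorem gives a locally unique $\boldsymbol{\alpha}^*$. The paper reads these entries off the arc-length integral $\oint ds/\|(1+\eta)v\|$ rather than the adjoint formula; the two agree here because $Z_k\cdot\psi_i\hat{f}=\psi_i$.

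One correction to your ``main obstacle'': the cycles do not move. Since $1+\sum_i\alpha_i\psi_i>0$ for small $\boldsymbol{\alpha}$, $\hat{f}_{\boldsymbol{\alpha}}$ is a positive rescaling of $\hat{f}$ with exactly the same orbits and Poincar\'e maps. Hence hyperbolicity is preserved exactly and $P_k(\boldsymbol{\alpha})=\int_0^{\hat{T}_k}dt/\bigl(1+\sum_i\alpha_i\psi_i(\hat{\gamma}_k(t))\bigr)$ explicitly. This makes your Step~1 and the projection onto $Z_k(0)$ unnecessary, though not wrong.
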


\begin{proof}
Since the limit cycles are hyperbolic, structural stability implies they persist for small $\boldsymbol{\alpha}$. 
Let $\mathbf{T}(\boldsymbol{\alpha}) : \mathbb{R}^N \to \mathbb{R}^N$ be the map taking the parameters to the periods of the resulting cycles. We aim to solve $\mathbf{T}(\boldsymbol{\alpha}) = \mathbf{T}_{target}$.We analyze the Jacobian matrix of this map, $\mathbf{J} \in \mathbb{R}^{N \times N}$, evaluated at $\boldsymbol{\alpha} = \mathbf{0}$. 
The entry $J_{ki} = \frac{\partial T^k}{\partial \alpha_i}$ represents the sensitivity of the $k$-th cycle's period to the $i$-th bump function.

The period of a cycle $\gamma$ subject to a velocity scaling $v(x) \to (1 + \eta(x))v(x)$ is given by $T = \oint_{\gamma} \frac{ds}{\| (1+\eta) v \|} \approx \oint \frac{ds}{\|v\|} - \oint \frac{\eta}{\|v\|} ds$.
Thus, the partial derivatives are:$$ \frac{\partial T^k}{\partial \alpha_i} \bigg|_{\boldsymbol{\alpha}=0} = - \int_0^{\hat{T}_k} \psi_i(\hat{\gamma}_k(t)) \, dt. $$
\textbf{1. Diagonal terms ($i=k$):}
Since $\psi_k \equiv 1$ on $\hat{\gamma}_k$, the integral is exact:$$ J_{kk} = - \int_0^{\hat{T}_k} 1 \, dt = -\hat{T}_k. $$
\textbf{2. Off-diagonal terms ($i \neq k$):}
Since the support of $\psi_i$ is concentrated on cycle $i$, its value on cycle $k$ is bounded by the leakage parameter $\xi$:$$ |J_{ki}| = \left| - \int_0^{\hat{T}_k} \psi_i(\hat{\gamma}_k(t)) \, dt \right| \le \int_0^{\hat{T}_k} \xi \, dt = \xi \hat{T}_k. $$
\textbf{3. Invertibility via Diagonal Dominance:}
We construct the Jacobian to be strictly diagonally dominant. The condition $|J_{kk}| > \sum_{i \neq k} |J_{ki}|$ becomes:$$ \hat{T}_k > \sum_{i \neq k} \xi \hat{T}_k = (N-1)\xi \hat{T}_k \implies 1 > (N-1)\xi. $$
By choosing the bump functions such that the leakage $\xi < \frac{1}{N-1}$, the matrix $\mathbf{J}$ is strictly diagonally dominant. 
By the Lévy-Desplanques Theorem (Theorem~\ref{lem:levy_desplanques}), $\mathbf{J}$ is non-singular (invertible).

Since the Jacobian is invertible at $\mathbf{0}$, the Inverse Function Theorem guarantees that $\mathbf{T}(\boldsymbol{\alpha})$ is a local diffeomorphism mapping a neighborhood of $\mathbf{0}$ to a neighborhood of $\hat{\mathbf{T}}$. 
Thus, for any target periods $\mathbf{T}$ sufficiently close to $\hat{\mathbf{T}}$, there exists a unique $\boldsymbol{\alpha}^*$ such that $\mathbf{T}(\boldsymbol{\alpha}^*) = \mathbf{T}$.
\end{proof}

\begin{remark}[Hyperbolicity preservation]
The corrected vector field $\hat{f}_{\alpha^*}$ preserves the hyperbolicity of $\hat{\gamma}$.
Since $\psi$ is $C^\infty$ with support in a tubular neighborhood of $\hat{\gamma}$ and $|\alpha^*| \leq L_T\eta/T$ is small, the correction is a small $C^1$ perturbation.
By structural stability of hyperbolic periodic orbits, $\hat{\gamma}$ remains hyperbolic for the corrected system.
Moreover, $\|\hat{f} - \hat{f}_{\alpha^*}\|_{C^1} = O(|\alpha^*| \cdot \|\psi\|_{C^1} \cdot \|\hat{f}\|_{C^1})$, which can be made arbitrarily small by choosing $\eta$ small.
\end{remark}

\begin{lemma}[Trajectory bounds near limit cycles]\label{lem:lc_traj_bound}
Let $\mcV \subset \boa(\gamma)$ with $$\inf_{x \in \mcV} \dist(x, \partial \boa(\gamma)) \geq \eta > 0.$$
If periods match exactly ($T = \hat{T}$) and $d_H(\gamma, \hat{\gamma}) < \varepsilon_{\mathrm{geom}}$, then:
\[
\limsup_{t \to \infty} \|\varphi(t, x_0) - \hat{\varphi}(t, x_0)\| \leq \varepsilon_{\mathrm{geom}} + O(\|f - \hat{f}\|_{C^1}).
\]
\end{lemma}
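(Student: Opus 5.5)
The proof follows the two-stage template of Lemma~\ref{lem:traj_bound}: a transient phase controlled by Gr\"onwall, then an asymptotic phase where hyperbolicity of the cycle replaces the exponential contraction used for fixed points. The new ingredient is the phase variable. Transverse to the cycle the flow contracts, but along it there is only neutral stability. Exact period matching is what prevents the neutral phase error from growing.

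\textbf{Stage 1 (transient).} As in Lemma~\ref{lem:traj_bound}, the closure $\bar{\mcV}$ is compact and bounded away from $\partial\boa(\gamma)$. Hence there is a uniform time $T^*$ after which $\varphi(t,x_0)$ lies in a tubular neighborhood $N_r(\gamma)$ on which the asymptotic-phase (isochron) coordinates of $\gamma$ are defined. Gr\"onwall on $[0,T^*]$ gives $\|\varphi(t,x_0)-\hat\varphi(t,x_0)\|\le K_1\|f-\hat f\|_{C^1}$. Lemma~\ref{lem:lc_stability} gives $d_H(\gamma,\hat\gamma)\le L_H\|f-\hat f\|_{C^1}$. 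Together these show that, for small perturbations, $\hat\varphi(T^*,x_0)$ lies in the corresponding neighborhood of $\hat\gamma$.

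\textbf{Stage 2 (asymptotic).} Write $\theta(x)\in\reals/T\mathbb{Z}$ for the asymptotic phase of $x$ with respect to $\gamma$ under $f$, and $\hat\theta(x)$ for the asymptotic phase with respect to $\hat\gamma$ under $\hat f$; the two share the common period $T=\hat T$. By the asymptotic-phase property, which holds for hyperbolic attracting cycles,
\[
\|\varphi(t,x_0)-\gamma(t+\theta(x_0))\|\le Ce^{-\lambda t},
\qquad
\|\hat\varphi(t,x_0)-\hat\gamma(t+\hat\theta(x_0))\|\le Ce^{-\lambda t}.
\]
Here $\lambda$ is the transverse Floquet rate. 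By persistence, the constants $C$ and $\lambda$ can be taken uniform for $\hat f$ near $f$. Parametrize $\hat\gamma$ so that $\sup_s\|\gamma(s)-\hat\gamma(s)\|\le\varepsilon_{\mathrm{geom}}+O(\|f-\hat f\|_{C^1})$. This is possible because $\hat\gamma$ is $C^1$-close to $\gamma$ as a parametrized curve: continuation of the cycle via the implicit function theorem on the Poincar\'e map gives closeness of parametrizations, not just Hausdorff closeness.

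The triangle inequality then gives
\[
\limsup_{t\to\infty}\|\varphi-\hat\varphi\|\le \sup_s\|\gamma(s)-\hat\gamma(s)\|+\sup_s\|\hat\gamma(s+\theta)-\hat\gamma(s+\hat\theta)\|.
\]
Because the periods agree exactly, the phase offset $\theta(x_0)-\hat\theta(x_0)$ is constant in $t$. This is the ``frozen phase'' point, and it is what keeps the second term bounded rather than drifting. That term is at most $\|\hat f\|_{C^0}\,|\theta(x_0)-\hat\theta(x_0)|$.

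\textbf{Main obstacle.} The hard step is bounding $|\theta(x_0)-\hat\theta(x_0)|$ by $O(\|f-\hat f\|_{C^1})$ uniformly on $\mcV$. I would evaluate both phase maps at the point $y=\varphi(T^*,x_0)$. Since $\hat\varphi(T^*,x_0)$ is within $K_1\|f-\hat f\|_{C^1}$ of $y$, and $\hat\theta$ is Lipschitz on $N_r(\hat\gamma)$, moving between the two points costs only $O(\|f-\hat f\|_{C^1})$. It then remains to compare $\theta$ and $\hat\theta$ at the same point $y$. For this I would use continuous dependence of the isochron foliation on the vector field in the $C^1$ topology. The isochrons are the strong stable fibers of the normally hyperbolic cycle, so Fenichel theory gives $\sup_{N_r}|\theta-\hat\theta|=O(\|f-\hat f\|_{C^1})$, after aligning the reference points on the two cycles.

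If only $C^0$ continuity of the foliation is available, the bound weakens to $o(1)$ rather than $O(\|f-\hat f\|_{C^1})$. That weaker statement still suffices for Theorem~\ref{thm:uap_lc}, since only smallness is needed there.
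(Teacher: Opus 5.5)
Your proposal is correct and follows the paper's proof: a Gr\"onwall bound on a uniform transient $[0,T^*]$; a phase offset that stays frozen under exact period matching; the asymptotic error split into a cycle-to-cycle term plus $L_\gamma|\Delta\Psi|$; and $|\Delta\Psi|$ bounded through $C^1$-Lipschitz dependence of the asymptotic phase (isochron foliation) on the vector field. Two of your steps tighten that proof without changing its route: you replace the paper's $d_H(\gamma,\hat\gamma)$ with a phase-aligned $\sup_s\|\gamma(s)-\hat\gamma(s)\|$, since Hausdorff distance alone does not control matched-phase positions, and you compare the phases at $\varphi(T^*,x_0)$ near the cycle.
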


\begin{proof}
The argument mirrors the two-stage analysis of Lemma~\ref{lem:traj_bound}, with the cycle's asymptotic phase function (Definition~\ref{def:asymptotic_phase}) replacing the role of the fixed point.

\textit{Transient phase $[0, T^*]$.}
Since $\mcV$ is bounded away from the basin boundary, there is a uniform transient time $T^* < \infty$ before trajectories from $\mcV$ enter a tubular neighborhood of $\gamma$ and $\hat{\gamma}$.
On this interval, Gr\"onwall's inequality applied to the difference $e(t) = \varphi(t, x_0) - \hat{\varphi}(t, x_0)$ yields
\[
\|e(t)\| \;\leq\; \|f - \hat{f}\|_{C^0} \cdot \frac{e^{Lt} - 1}{L}, \qquad t \in [0, T^*],
\]
where $L = \Lip(f)$.

\textit{Asymptotic phase $[T^*, \infty)$.}
After $T^*$, $\varphi(t, x_0)$ converges exponentially to $\gamma$ at asymptotic phase $\Psi^f(x_0)$, and $\hat{\varphi}(t, x_0)$ converges exponentially to $\hat{\gamma}$ at $\Psi^{\hat{f}}(x_0)$.
With exact period matching ($T = \hat{T}$), the phase difference $\Delta\Psi = \Psi^f(x_0) - \Psi^{\hat{f}}(x_0)$ is constant in $t$.
The asymptotic trajectory error decomposes as
\[
\limsup_{t \to \infty} \|e(t)\| \;\leq\; d_H(\gamma, \hat{\gamma}) + L_\gamma |\Delta\Psi|,
\]
where $L_\gamma = \sup \|\dot{\gamma}\|$.
By Floquet smoothness of the phase function on $C^1$-close vector fields (the same bound used in the proof of Lemma~\ref{lem:phase_coherence}), $|\Delta\Psi| \leq C_\Psi \|f - \hat{f}\|_{C^1}$, giving the claim.
\end{proof}

\begin{proof}[Proof of Theorem~\ref{thm:uap_lc} (multiplicative correction)]
\textbf{Step 1 (Base approximation):}
By Theorem~\ref{thm:uap_vf}, choose $\tilde{f} \in \hat{\mcF}$ with $\|f - \tilde{f}\|_{C^1} < \eta$.
By Lemma~\ref{lem:lc_stability}, $\tilde{f}$ has limit cycles $\tilde{\gamma}_i$ with periods $\tilde{T}_i$.

\textbf{Step 2 (Period correction):}
For single cycle: $\hat{f} = c^* \tilde{f}$ with $c^* = \tilde{T}/T$ (Lemma~\ref{lem:period_correction}).
For multiple cycles with disjoint tubular neighborhoods: $\hat{f} = (1 + \sum_i \alpha_i^* \psi_i) \tilde{f}$ (Lemma~\ref{lem:multi_period_correction}).

\textbf{Step 3 (Error control):}
By Lemma~\ref{lem:basin_error} (adapted for limit cycles), choose $\eta$ so $\mu(\mcE_{\mathrm{basin}}) < \delta/2$.
By Lemma~\ref{lem:lc_traj_bound}, exact period matching bounds asymptotic error.
Choose $\eta$ so geometric and transient errors sum to $< \varepsilon$.

The error set has measure $< \delta$, completing the proof.
\end{proof}

%%%%ALT

\subsection{With additive correction}

\begin{lemma}[Bump Function Realizability]\label{lem:bump_realizability}
Let $\tilde{f}$ be an approximation of $f$ with limit cycles $\tilde{\gamma}_i \subset N_i$. Let $\nu, \zeta > 0$ be tolerance parameters.
There exist functions $\{\hat{\mathbf{\Phi}}_i\}_{i=1}^N \subset \hat{\mcF}$ such that for each $i$:
\begin{enumerate}
\item Alignment: $\| \hat{\Phi}_i(x) - \tilde{f}(x) \| < \nu$ for all $x \in \tilde{\gamma}_i$.
\item Support Decay ($C^1$ Leakage): $\|\hat{\mathbf{\Phi}}_i(x)\|_{C^1} < \zeta$ for all $x \notin M_i$.
\item Smoothness: $\|\hat{\mathbf{\Phi}}_i\|_{C^1}$ is bounded.
\end{enumerate}
\end{lemma}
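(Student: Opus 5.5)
The approach is to build an ideal smooth localized field first and then invoke the $C^1$ universal approximation property (Definition~\ref{def:hypothesis_class}) once per index $i$. Choose nested open neighborhoods $\tilde{\gamma}_i \subset N_i \Subset M_i$, with the $M_i$ pairwise disjoint. This is possible because the cycles $\tilde{\gamma}_i$ are compact and pairwise disjoint; by Lemma~\ref{lem:lc_stability} they are $L_H\eta$-close to the well-separated $\gamma_i$, so tubes of radius $r < \frac12\min_{i\neq j}\dist(\gamma_i,\gamma_j) - L_H\eta$ work. Let $\chi_i \in C^\infty_c(M_i)$ be a cutoff with $0\le\chi_i\le1$ and $\chi_i\equiv1$ on $\overline{N_i}$, obtained by mollifying the indicator of an intermediate tube. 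Define the ideal field $\Phi_i \coloneqq \chi_i\,\tilde f$. Then $\Phi_i$ is $C^1$ on $\mcX$, it equals $\tilde f$ on $N_i\supset\tilde\gamma_i$, and it vanishes identically together with its derivative outside $M_i$. Its $C^1$ norm satisfies $\|\Phi_i\|_{C^1}\le \|\tilde f\|_{C^1}(1+\|\chi_i\|_{C^1})$, which is finite but depends on the tube widths through $\|D\chi_i\|\sim 1/(\text{gap between }N_i\text{ and }\partial M_i)$.

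Next, apply the $C^1$ UAP to $\Phi_i$ with tolerance $\tau \coloneqq \min(\nu,\zeta)$. This gives $\hat{\mathbf\Phi}_i\in\hat\mcF$ with $\|\Phi_i-\hat{\mathbf\Phi}_i\|_{C^1}<\tau$. Each of the three claims then follows by the triangle inequality. For alignment, take $x\in\tilde\gamma_i$: then $\|\hat\Phi_i(x)-\tilde f(x)\| = \|\hat\Phi_i(x)-\Phi_i(x)\|\le\sup\|\Phi_i-\hat{\mathbf\Phi}_i\|<\nu$. For leakage, take $x\notin M_i$: then $\Phi_i$ and $D\Phi_i$ vanish at $x$, so $\|\hat{\mathbf\Phi}_i(x)\|+\|D\hat{\mathbf\Phi}_i(x)\|_{\mathrm{op}}$ is bounded by the $C^1$ distance, which is $<\zeta$. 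Here we read the pointwise "$C^1$ norm at $x$" as the sum of the value and Jacobian norms at $x$, and the bound holds uniformly in $x$. For smoothness, $\|\hat{\mathbf\Phi}_i\|_{C^1}\le\|\Phi_i\|_{C^1}+\tau$, which is bounded independently of $\nu,\zeta$ for fixed tubes.

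The main obstacle is a domain and regularity subtlety rather than the estimates. The UAP in Definition~\ref{def:hypothesis_class} is stated on the open bounded set $\mcX$ with sup norms, so $\Phi_i$ must be genuinely $C^1$ up to the boundary with bounded derivatives. I would handle this by taking $\tilde f$ as the restriction of a $C^1$ field on a neighborhood of $\overline{\mcX}$; since $\chi_i$ is compactly supported in the interior tube $M_i$, no boundary issue arises for the product. I would also state explicitly that the tubes $N_i, M_i$ are fixed before $\nu,\zeta$ are chosen. This ordering is what makes the bound in item 3 uniform, and it lets Remark~\ref{rem:bump_support} subsequently pick $\nu,\zeta$ small enough for the diagonal-dominance condition $\nu+(N-1)\zeta<1/C_Z$.
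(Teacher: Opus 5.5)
Your proposal is correct and follows the paper's proof. You build an ideal smooth field equal to $\tilde f$ on $N_i$ and vanishing outside $M_i$, approximate it via the $C^1$ universal approximation property to tolerance $\min(\nu,\zeta)$, and read off alignment, leakage, and the $C^1$ bound from that single estimate; beyond this you only make explicit what the paper leaves implicit, namely the cutoff construction $\chi_i \tilde f$, the choice of pairwise disjoint tubes $M_i$ (needed so the leakage bound applies on the other cycles), and the bound $\|\hat{\mathbf{\Phi}}_i\|_{C^1} \le \|\chi_i \tilde f\|_{C^1} + \min(\nu,\zeta)$.
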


\begin{proof}
Let $\mathbf{\Phi}^*_i$ be a smooth ``ideal" bump function that equals $\tilde{f}$ on $N_i$ and vanishes outside $M_i$. 
Since $\hat{\mcF}$ possesses the $C^1$ UAP, there exists $\hat{\mathbf{\Phi}}_i \in \hat{\mcF}$ such that $\|\hat{\mathbf{\Phi}}_i - \mathbf{\Phi}^*_i\|_{C^1} < \min(\nu, \zeta)$.
The condition on $x \in \tilde{\gamma}_i$ is satisfied by $\nu$-closeness.
The condition on $x \notin M_i$ (where $\mathbf{\Phi}^*_i = 0$) is satisfied because the UAP yields $\zeta$-closeness in the $C^1$ norm.
\end{proof}

\begin{definition}[Adjoint Solution]\label{def:adjoint}
Let $\tilde{\varphi}_t$ denote the flow generated by the vector field $\tilde{f}$, and let $\tilde{\gamma}(t)$ be a hyperbolic periodic orbit with period $\tilde{T}$. The \textbf{linear variational equation} along the orbit is given by $\dot{u} = D\tilde{f}(\tilde{\gamma}(t))u$.
The corresponding \textbf{adjoint equation} is defined as:
\[
\dot{z} = -[D\tilde{f}(\tilde{\gamma}(t))]^\top z.
\]
There exists a unique $\tilde{T}$-periodic solution $Z(t)$ to this adjoint equation, normalized such that:
\[
\langle Z(t), \tilde{f}(\tilde{\gamma}(t)) \rangle \equiv 1 \quad \text{for all } t \in [0, \tilde{T}].
\]
The vector $Z(t)$ represents the sensitivity of the period to instantaneous perturbations of the vector field.
\end{definition}

\begin{lemma}[First Variation of the Period \citep{malkin1956some}]\label{lem:period_variation}
Consider the perturbed vector field $\hat{f}_{\alpha} = \tilde{f} + \alpha g$. The derivative of the period $T(\alpha)$ with respect to the parameter $\alpha$ at $\alpha=0$ is given exactly by:
\[
\frac{d T}{d \alpha}\bigg|_{\alpha=0} = - \int_0^{\tilde{T}} \langle Z(t), g(\tilde{\gamma}(t)) \rangle \, dt.
\]
\end{lemma}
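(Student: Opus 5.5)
The plan is to treat $T(\alpha)$ as a smooth function defined through the persistent periodic orbit and differentiate the periodicity condition at $\alpha = 0$. Concretely, fix a local cross-section $\Sigma$ transverse to $\tilde{\gamma}$ at $p = \tilde{\gamma}(0)$ and let $\tilde{\varphi}^\alpha_t$ be the flow of $\hat{f}_\alpha = \tilde{f} + \alpha g$. By hyperbolicity, $DP(p) - I$ is invertible on $T_p\Sigma$. The implicit function theorem applied to
\[
F(x, \tau, \alpha) = \tilde{\varphi}^\alpha_\tau(x) - x, \qquad x \in \Sigma,
\]
then yields $C^1$ branches $x(\alpha) \in \Sigma$ and $T(\alpha)$ with $x(0) = p$, $T(0) = \tilde{T}$ and $F(x(\alpha), T(\alpha), \alpha) = 0$. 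This is the same persistence used in Lemma~\ref{lem:lc_stability}.

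Next I differentiate $F(x(\alpha), T(\alpha), \alpha) = 0$ at $\alpha = 0$. Write $\Phi(t)$ for the fundamental matrix of the variational equation $\dot{u} = D\tilde{f}(\tilde{\gamma}(t))u$. Differentiating gives
\[
(\Phi(\tilde{T}) - I)\,x'(0) + T'(0)\,\tilde{f}(p) + w(\tilde{T}) = 0,
\]
where $w(t) = \partial_\alpha \tilde{\varphi}^\alpha_t(p)|_{\alpha=0}$ solves $\dot{w} = D\tilde{f}(\tilde{\gamma}(t))\,w + g(\tilde{\gamma}(t))$ with $w(0) = 0$. Variation of constants gives $w(\tilde{T}) = \int_0^{\tilde{T}} \Phi(\tilde{T})\Phi(s)^{-1} g(\tilde{\gamma}(s))\,ds$.

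The key step is to pair this identity with the adjoint vector $Z(\tilde{T}) = Z(0)$ from Definition~\ref{def:adjoint}. Two standard facts are needed. First, $\frac{d}{dt}\langle Z(t), \Phi(t)\Phi(s)^{-1}v\rangle = 0$, so $Z(\tilde{T})^\top \Phi(\tilde{T})\Phi(s)^{-1} = Z(s)^\top$. Second, $Z(0)$ is a left eigenvector of the monodromy $\Phi(\tilde{T})$ with eigenvalue $1$, since $Z(0)^\top\Phi(\tilde{T}) = Z(\tilde{T})^\top = Z(0)^\top$ by periodicity. The second fact kills the term $\langle Z(0), (\Phi(\tilde{T}) - I)x'(0)\rangle$. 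The normalization $\langle Z, \tilde{f}\rangle = 1$ turns the middle term into $T'(0)$. The first fact turns the last term into $\int_0^{\tilde{T}} \langle Z(s), g(\tilde{\gamma}(s))\rangle\,ds$. Solving for $T'(0)$ gives the claimed formula, including the minus sign.

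I expect the main obstacle to be the existence and uniqueness of the periodic, normalized adjoint solution $Z$ that this argument uses. Hyperbolicity makes $1$ a simple eigenvalue of $\Phi(\tilde{T})^\top$, because the trivial multiplier coming from $\tilde{f}$ is the only one on the unit circle. Hence there is a unique left eigenvector $Z(0)$, up to scaling, and I would fix the scale by $\langle Z(0), \tilde{f}(p)\rangle = 1$. This is possible because $\tilde{f}(p)$ is not in the range of $\Phi(\tilde{T}) - I$. Propagating $Z(t) = \Phi(t)^{-\top}Z(0)$ then solves the adjoint equation and is $\tilde{T}$-periodic. Finally, $\langle Z(t), \tilde{f}(\tilde{\gamma}(t))\rangle$ is constant, since $\tilde{f}(\tilde{\gamma}(t)) = \Phi(t)\tilde{f}(p)$, so the normalization holds for all $t$.

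A secondary concern is that the choice of section $\Sigma$ does not matter. This is automatic: $T(\alpha)$ is the minimal period of the persistent orbit, which is intrinsic to the orbit and independent of $\Sigma$.
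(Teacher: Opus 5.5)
Your derivation is correct. The paper gives no proof of this lemma: it cites Malkin and points to standard texts. So your argument supplies what the citation stands for: the implicit function theorem on the periodicity condition $\tilde\varphi^\alpha_\tau(x)=x$, differentiation at $\alpha=0$, and pairing with the left Floquet eigenvector $Z(0)$ of the monodromy $\Phi(\tilde T)$. It also buys several things the paper otherwise lacks.

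\begin{itemize}
\item Your implicit-function setup gives $C^1$ dependence of the period on the parameter, and it goes through unchanged for the vector parameter $\boldsymbol{\alpha}$ of Lemma~\ref{lem:exact_jacobian}. The inverse function theorem in Proposition~\ref{prop:correction_existence} needs exactly this regularity of $\mathbf{T}(\boldsymbol{\alpha})$, and the paper only asserts it by citation in Lemma~\ref{lem:lc_stability}.
\item You only use that $1$ is not an eigenvalue of $DP(p)$. Nondegeneracy of the cycle therefore suffices in place of hyperbolicity.
\item Your formula covers perturbations $g$ that are not parallel to $\tilde f$ and so move the orbit. That is the situation of the additive correction with $g=\hat{\mathbf{\Phi}}_j$. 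The arc-length heuristic $T=\oint ds/\|(1+\eta)v\|$ in the proof of Lemma~\ref{lem:multi_period_correction} only applies to scalar rescalings $g=\psi\tilde f$, which preserve every orbit and merely reparametrize time. Your formula recovers that case, since $\langle Z,\psi\tilde f\rangle=\psi$ along the cycle.
\end{itemize}

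Two small presentation points.
\begin{itemize}
\item The implicit function theorem needs the joint derivative $(v,s)\mapsto(\Phi(\tilde T)-I)v+s\tilde f(p)$ on $T_p\Sigma\times\reals$ to be invertible, not just $DP(p)-I$. This follows at once: in the splitting $\reals^n=\mathrm{span}\{\tilde f(p)\}\oplus T_p\Sigma$ that map is block upper triangular with diagonal blocks $1$ and $DP(p)-I$. This uses $DP(p)=\pi\,\Phi(\tilde T)|_{T_p\Sigma}$, where $\pi$ is the projection along $\tilde f(p)$.
\item In the left-eigenvector step, propagation gives $Z(\tilde T)^\top\Phi(\tilde T)=Z(0)^\top$ (your first fact with $s=0$), not $Z(0)^\top\Phi(\tilde T)=Z(\tilde T)^\top$. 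Combined with $Z(\tilde T)=Z(0)$, this still yields $Z(0)^\top\Phi(\tilde T)=Z(0)^\top$, so nothing downstream changes.
\end{itemize}
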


See also \citet{guckenheimer1983nonlinear} and \citet{ermentrout2010mathematical}. 

\begin{lemma}[Exact Jacobian Form]\label{lem:exact_jacobian}
Consider the multi-cycle approximation $\hat{f}_{\boldsymbol{\alpha}} = \tilde{f} + \sum_{j=1}^N \alpha_j \hat{\mathbf{\Phi}}_j$.
Let $Z_i(t)$ be the normalized adjoint solution associated with the $i$-th limit cycle $\tilde{\gamma}_i$.
Applying Lemma~\ref{lem:period_variation} with perturbation $g = \hat{\mathbf{\Phi}}_j$, the entries of the period Jacobian matrix $J = D_{\boldsymbol{\alpha}} \mathbf{T}(\mathbf{0})$ are:
\[
J_{ij} = \frac{\partial T_i}{\partial \alpha_j}\bigg|_{\boldsymbol{\alpha}=\mathbf{0}} = - \int_0^{\tilde{T}_i} \langle Z_i(t), \hat{\mathbf{\Phi}}_j(\tilde{\gamma}_i(t)) \rangle \, dt.
\]
\end{lemma}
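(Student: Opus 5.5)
The proof is a direct application of Lemma~\ref{lem:period_variation} to the linear family $\hat{f}_{\boldsymbol{\alpha}} = \tilde{f} + \sum_j \alpha_j \hat{\mathbf{\Phi}}_j$, one column of $J$ at a time. Fix an index $j$ and freeze every other parameter at zero, $\alpha_k = 0$ for $k \neq j$. The one-parameter family $\boldsymbol{\alpha} = \alpha_j e_j$ then has the form $\tilde{f} + \alpha_j g$ with $g = \hat{\mathbf{\Phi}}_j$, which is exactly the setting of Lemma~\ref{lem:period_variation}. By definition, the partial derivative $\partial T_i/\partial \alpha_j$ at $\mathbf{0}$ is the ordinary derivative of $T_i$ along this one-parameter family.

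First I need $T_i(\boldsymbol{\alpha})$ to be well defined and $C^1$ near $\mathbf{0}$. Each $\tilde{\gamma}_i$ is hyperbolic. Since $\hat{\mathbf{\Phi}}_j \in \hat{\mcF}$ is $C^1$ with bounded $C^1$ norm (Lemma~\ref{lem:bump_realizability}, item 3), the map $\boldsymbol{\alpha} \mapsto \hat{f}_{\boldsymbol{\alpha}}$ is affine and hence smooth into $C^1$. Lemma~\ref{lem:lc_stability} then gives, for $\|\boldsymbol{\alpha}\|$ small, a unique continuation $\tilde{\gamma}_i(\boldsymbol{\alpha})$ with period $T_i(\boldsymbol{\alpha})$.

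For differentiability I would use the standard argument. Take a Poincaré section $\Sigma_i$ at a point of $\tilde{\gamma}_i$. The fixed point of the return map solves $P_{\boldsymbol{\alpha}}(x) = x$, and $I - DP_{\mathbf{0}}$ is invertible because no Floquet multiplier equals $1$. The implicit function theorem then gives a $C^1$ fixed point $x_i(\boldsymbol{\alpha})$, and the return time $\tau_{\boldsymbol{\alpha}}(x_i(\boldsymbol{\alpha})) = T_i(\boldsymbol{\alpha})$ is $C^1$ in $\boldsymbol{\alpha}$. So the Jacobian $J = D_{\boldsymbol{\alpha}}\mathbf{T}(\mathbf{0})$ exists, and its entries are the partial derivatives.

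For entry $(i,j)$, apply Lemma~\ref{lem:period_variation} to cycle $\tilde{\gamma}_i$ with $g = \hat{\mathbf{\Phi}}_j$ and adjoint $Z_i$ from Definition~\ref{def:adjoint}, normalized by $\langle Z_i, \tilde{f}(\tilde{\gamma}_i)\rangle \equiv 1$. This gives $J_{ij} = -\int_0^{\tilde{T}_i} \langle Z_i(t), \hat{\mathbf{\Phi}}_j(\tilde{\gamma}_i(t))\rangle\,dt$. The formula is the same whether or not $i = j$: Lemma~\ref{lem:period_variation} places no locality condition on $g$, so leakage from the support of $\hat{\mathbf{\Phi}}_j$ onto $\tilde{\gamma}_i$ is captured exactly rather than approximated. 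Ranging over all $i,j$ fills in the matrix.

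The main obstacle is the justification in the second paragraph. Lemma~\ref{lem:period_variation} is stated as a known result and presupposes that the period is differentiable along the family. Its hypotheses need a hyperbolic, in particular non-degenerate, cycle and a $C^1$ perturbation $g$; both hold here, and checking them is the only nontrivial step. As a sanity check, taking $\hat{\mathbf{\Phi}}_j = \tilde{f}$ on $\tilde{\gamma}_i$ gives $J_{ii} = -\tilde{T}_i$. This agrees with the time-rescaling computation of Lemma~\ref{lem:period_correction}, since $d/d\alpha$ of $\tilde{T}/(1+\alpha)$ at $\alpha = 0$ equals $-\tilde{T}$.
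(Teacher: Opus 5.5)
Your proposal is correct and follows the paper's route: the paper gets each entry $J_{ij}$ by applying Lemma~\ref{lem:period_variation} to the one-parameter family $\tilde{f} + \alpha_j \hat{\mathbf{\Phi}}_j$, using the adjoint $Z_i$ of $\tilde{\gamma}_i$, and gives no further argument. Your Poincar\'e-section and implicit-function argument that $\mathbf{T}(\boldsymbol{\alpha})$ is $C^1$ near $\mathbf{0}$ fills in a differentiability step the paper leaves implicit, and the $J_{ii} = -\tilde{T}_i$ check is consistent with Lemma~\ref{lem:period_correction}.
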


\begin{lemma}[Robust Invertibility of Period Jacobian]\label{lem:robust_inv}
Let $C_Z = \max_{i} \sup_{t \in [0, \tilde{T}_i]} \|Z_i(t)\|$ be the maximum norm of the adjoint solutions.
Assume the approximator bump functions $\hat{\mathbf{\Phi}}_j$ satisfy the following proximity conditions:
\begin{itemize}
    \item \textbf{Alignment (on cycle):} $\|\hat{\mathbf{\Phi}}_i(x) - \tilde{f}(x)\| < \nu$ for all $x \in \tilde{\gamma}_i$.
    \item \textbf{Leakage (off cycle):} $\|\hat{\mathbf{\Phi}}_j(x)\| < \zeta$ for all $x \in \tilde{\gamma}_i$ where $i \neq j$.
\end{itemize}
If the approximation parameters satisfy the condition:
\[
\nu + (N-1)\zeta < \frac{1}{C_Z},
\]
then the Jacobian matrix $J = D_{\boldsymbol{\alpha}} \mathbf{T}(\mathbf{0})$ is strictly diagonally dominant and therefore non-singular.
\end{lemma}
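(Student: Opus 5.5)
The plan is to bound each row of $J$ separately, using Lemma~\ref{lem:exact_jacobian} for the entries and the normalization of the adjoint solution in Definition~\ref{def:adjoint} to evaluate the diagonal. Strict diagonal dominance then follows by comparing the diagonal entry with the sum of the off-diagonal entries, and invertibility follows from Lemma~\ref{lem:levy_desplanques}.

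\textbf{Diagonal entries.} Fix $i$. Write $\hat{\mathbf{\Phi}}_i(\tilde{\gamma}_i(t)) = \tilde{f}(\tilde{\gamma}_i(t)) + e_i(t)$, where $\|e_i(t)\| < \nu$ by the alignment condition. Since $\langle Z_i(t), \tilde{f}(\tilde{\gamma}_i(t))\rangle \equiv 1$, we get
\[
J_{ii} = -\tilde{T}_i - \int_0^{\tilde{T}_i} \langle Z_i(t), e_i(t)\rangle\,dt .
\]
By Cauchy--Schwarz the error integral is at most $\tilde{T}_i C_Z \nu$ in absolute value, so $|J_{ii}| \ge \tilde{T}_i(1 - C_Z\nu)$.

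\textbf{Off-diagonal entries.} For $j \neq i$ the leakage condition gives $\|\hat{\mathbf{\Phi}}_j(\tilde{\gamma}_i(t))\| < \zeta$. Hence $|J_{ij}| \le \tilde{T}_i C_Z \zeta$, and summing over the $N-1$ indices $j \neq i$ gives
\[
\sum_{j\neq i}|J_{ij}| \le (N-1)\tilde{T}_i C_Z\zeta .
\]

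\textbf{Conclusion.} Strict dominance in row $i$ requires $\tilde{T}_i(1 - C_Z\nu) > (N-1)\tilde{T}_i C_Z \zeta$. Dividing by $\tilde{T}_i > 0$, this is $1 > C_Z(\nu + (N-1)\zeta)$, which is exactly the stated hypothesis. The hypothesis also forces $C_Z\nu < 1$, so the lower bound on $|J_{ii}|$ is positive. The argument is uniform in $i$, so every row is strictly dominant and Lemma~\ref{lem:levy_desplanques} gives non-singularity.

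\textbf{Main obstacle.} The one delicate point is that the inequalities must be strict after integration. The hypotheses are pointwise strict bounds on the compact cycle $\tilde{\gamma}_i$, and by continuity the supremum there is attained, so each integral bound is still strictly below $\tilde{T}_i C_Z\nu$ (respectively $\tilde{T}_i C_Z\zeta$). In any case the final inequality has slack, since the hypothesis is itself strict, so non-strict integral bounds would suffice. The degenerate case $C_Z = 0$ cannot occur, because the normalization $\langle Z_i, \tilde{f}\rangle = 1$ forces $Z_i \neq 0$.
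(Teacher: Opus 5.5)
Your proof is correct and follows the paper's argument essentially line for line. You split the diagonal entry using the normalization $\langle Z_i, \tilde{f}\rangle \equiv 1$ to get $|J_{ii}| \ge \tilde{T}_i(1 - C_Z\nu)$, bound each off-diagonal entry by $\tilde{T}_i C_Z \zeta$, divide the row inequality by $\tilde{T}_i$, and invoke Lemma~\ref{lem:levy_desplanques}; your remarks on strictness and on $C_Z > 0$ fill small points the paper leaves implicit without changing the route.
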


\begin{proof}
We analyze the entries of the Jacobian $J_{ij}$ defined in Lemma~\ref{lem:exact_jacobian}.

\textbf{1. Diagonal Terms ($i=j$):}
On the cycle $\tilde{\gamma}_i$, we write $\hat{\mathbf{\Phi}}_i = \tilde{f} + \boldsymbol{\Delta}_i$, where $\|\boldsymbol{\Delta}_i\| < \nu$.
Substituting this into the integral:
\[
J_{ii} = - \int_0^{\tilde{T}_i} \langle Z_i(t), \tilde{f}(\tilde{\gamma}_i(t)) + \boldsymbol{\Delta}_i(t) \rangle \, dt.
\]
Using linearity and the normalization property $\langle Z_i(t), \tilde{f}(\tilde{\gamma}_i(t)) \rangle \equiv 1$:
\[
J_{ii} = - \underbrace{\int_0^{\tilde{T}_i} 1 \, dt}_{= \tilde{T}_i} - \underbrace{\int_0^{\tilde{T}_i} \langle Z_i(t), \boldsymbol{\Delta}_i(t) \rangle \, dt}_{E_{ii}}.
\]
We bound the error term $E_{ii}$:
\[
|E_{ii}| \le \int_0^{\tilde{T}_i} \|Z_i(t)\| \|\boldsymbol{\Delta}_i(t)\| \, dt \le \tilde{T}_i C_Z \nu.
\]
Thus, the magnitude of the diagonal entry is bounded from below:
\[
|J_{ii}| \ge \tilde{T}_i (1 - C_Z \nu).
\]

\textbf{2. Off-Diagonal Terms ($i \neq j$):}
For $i \neq j$, the function $\hat{\mathbf{\Phi}}_j$ acts on the cycle $\tilde{\gamma}_i$. By the leakage assumption, $\|\hat{\mathbf{\Phi}}_j\| < \zeta$ on $\tilde{\gamma}_i$.
\[
|J_{ij}| = \left| - \int_0^{\tilde{T}_i} \langle Z_i(t), \hat{\mathbf{\Phi}}_j(\tilde{\gamma}_i(t)) \rangle \, dt \right| \le \int_0^{\tilde{T}_i} \|Z_i(t)\| \zeta \, dt \le \tilde{T}_i C_Z \zeta.
\]

\textbf{3. Diagonal Dominance:}
For strict diagonal dominance, we require $|J_{ii}| > \sum_{j \neq i} |J_{ij}|$.
Substituting our bounds:
\[
\tilde{T}_i (1 - C_Z \nu) > (N-1) \tilde{T}_i C_Z \zeta.
\]
Dividing by $\tilde{T}_i$ and rearranging:
\[
1 > C_Z \nu + C_Z (N-1) \zeta \implies \nu + (N-1)\zeta < \frac{1}{C_Z}.
\]
This condition holds by hypothesis, ensuring $J$ is invertible.
\end{proof}

\begin{proposition}[Correction Existence]\label{prop:correction_existence}
Let $\Delta T = \mathbf{T}_{\text{target}} - \tilde{\mathbf{T}}$.
Since $J(\mathbf{0})$ is non-singular (Lemma~\ref{lem:robust_inv}), by the Inverse Function Theorem, there exists a radius $R_{sol} > 0$ and a constant $\kappa > 0$ such that if $\|\Delta T\| < R_{sol}$, there exists a unique parameter vector $\boldsymbol{\alpha}^*$ satisfying $\mathbf{T}(\boldsymbol{\alpha}^*) = \mathbf{T}_{\text{target}}$ with the bound:
$$\|\boldsymbol{\alpha}^*\| \le \kappa \|\Delta T\|.$$
\end{proposition}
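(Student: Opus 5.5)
The plan is to apply the finite-dimensional Inverse Function Theorem to the period map $\mathbf{T}:\boldsymbol{\alpha}\mapsto(T_1(\boldsymbol{\alpha}),\dots,T_N(\boldsymbol{\alpha}))$ associated with the family $\hat f_{\boldsymbol{\alpha}}=\tilde f+\sum_j\alpha_j\hat{\mathbf{\Phi}}_j$. This requires three ingredients: a well-defined map near $\mathbf{0}$, $C^1$ regularity of that map, and invertibility of $J(\mathbf{0})$. The last ingredient is exactly Lemma~\ref{lem:robust_inv}.

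\textbf{Step 1: the map is defined.} Since $\|\sum_j\alpha_j\hat{\mathbf{\Phi}}_j\|_{C^1}\le\|\boldsymbol{\alpha}\|_1\max_j\|\hat{\mathbf{\Phi}}_j\|_{C^1}$, and the factor $\max_j\|\hat{\mathbf{\Phi}}_j\|_{C^1}$ is finite by the smoothness item of Lemma~\ref{lem:bump_realizability}, there is a ball $\|\boldsymbol{\alpha}\|<\rho$ on which the perturbation lies inside the radius $\delta_0$ of Lemma~\ref{lem:lc_stability} for every cycle $\tilde\gamma_i$. For such $\boldsymbol{\alpha}$, each $\tilde\gamma_i$ continues to a unique nearby hyperbolic cycle $\gamma_i(\boldsymbol{\alpha})$ with period $T_i(\boldsymbol{\alpha})$.

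\textbf{Step 2: regularity.} I would show that $\mathbf{T}$ is $C^1$ by the standard Poincaré-section argument. Fix a section $\Sigma_i$ transverse to $\tilde\gamma_i$ and consider
\[
F_i(x,\tau,\boldsymbol{\alpha})=\varphi_{\boldsymbol{\alpha}}(\tau,x)-x,\qquad x\in\Sigma_i.
\]
This map is $C^1$ in $(x,\tau,\boldsymbol{\alpha})$, because the vector field depends smoothly (affinely) on $\boldsymbol{\alpha}$ and flows of $C^1$ fields are $C^1$ in parameters. Hyperbolicity says $DP-I$ is invertible on $T\Sigma_i$, and the flow direction is transverse to $\Sigma_i$. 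Together these make $D_{(x,\tau)}F_i$ invertible at $(\tilde\gamma_i\cap\Sigma_i,\tilde T_i,\mathbf{0})$. The implicit function theorem then gives $C^1$ functions $x_i(\boldsymbol{\alpha})$ and $T_i(\boldsymbol{\alpha})$. Its partial derivatives at $\mathbf{0}$ are given by Lemma~\ref{lem:period_variation}, which yields the formula of Lemma~\ref{lem:exact_jacobian}.

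\textbf{Step 3: IFT and the bound.} Because $J(\mathbf{0})$ is non-singular by Lemma~\ref{lem:robust_inv} and $\mathbf{T}$ is $C^1$, the Inverse Function Theorem gives neighborhoods $U\ni\mathbf{0}$ and $W\ni\tilde{\mathbf{T}}$ such that $\mathbf{T}:U\to W$ is a $C^1$ diffeomorphism. I take $R_{sol}$ to be the radius of a ball around $\tilde{\mathbf{T}}$ contained in $W$. Existence and uniqueness of $\boldsymbol{\alpha}^*=\mathbf{T}^{-1}(\mathbf{T}_{\text{target}})$ within $U$ follow immediately. For the estimate I use the mean value inequality for $\mathbf{T}^{-1}$ on the convex ball:
\[
\|\boldsymbol{\alpha}^*\|=\|\mathbf{T}^{-1}(\mathbf{T}_{\text{target}})-\mathbf{T}^{-1}(\tilde{\mathbf{T}})\|\le\sup_{W}\|D\mathbf{T}^{-1}\|\,\|\Delta T\|.
\]
Shrinking $W$ so that $\|J(\boldsymbol{\alpha})^{-1}\|\le2\|J(\mathbf{0})^{-1}\|$ gives $\kappa=2\|J(\mathbf{0})^{-1}\|$. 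Continuity of $J$ guarantees such a shrinking exists.

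For a more quantitative $\kappa$, I can bound $\|J(\mathbf{0})^{-1}\|_\infty$ with the Varah bound for strictly diagonally dominant matrices:
\[
\|J^{-1}\|_\infty\le\Big(\min_i\tilde T_i\big(1-C_Z\nu-(N-1)C_Z\zeta\big)\Big)^{-1}.
\]
This is finite under the hypothesis of Lemma~\ref{lem:robust_inv}.

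The main obstacle is Step 2: making rigorous that the cycles persist as a $C^1$ family in $\boldsymbol{\alpha}$, with derivative given by the adjoint formula. Lemma~\ref{lem:lc_stability} only asserts Lipschitz dependence, so I would carry out the Poincaré-section implicit-function argument explicitly. Hyperbolicity of each $\tilde\gamma_i$ is what makes that argument work. A secondary point is uniqueness: the IFT delivers uniqueness only within $U$, and this is the sense in which the statement should be read.
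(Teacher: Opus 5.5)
Your proposal is correct and follows the same route as the paper. The paper simply applies the Inverse Function Theorem to the period map, using the non-singularity of $J(\mathbf{0})$ from Lemma~\ref{lem:robust_inv}. Your Poincar\'e-section implicit-function argument showing $\mathbf{T}$ is $C^1$ near $\mathbf{0}$, and your mean-value estimate giving $\kappa = 2\|J(\mathbf{0})^{-1}\|$ (made quantitative by the Varah bound), supply details the paper leaves implicit.
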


\begin{lemma}[Geometric Stability]\label{lem:geo_stability}
Let $\hat{f} = \tilde{f} + \sum \alpha^*_i \hat{\mathbf{\Phi}}_i$. 
Then:
\begin{enumerate}
\item \textbf{Cycle Shift:} $d_H(\tilde{\gamma}, \hat{\gamma}) \le L_H \, N B_\Phi \, \|\boldsymbol{\alpha}^*\|$.
\item \textbf{Basin Error:} $\mu(\mcB(\tilde{f}) \triangle \mcB(\hat{f})) \le K_{\text{sep}} \, N B_\Phi \, \|\boldsymbol{\alpha}^*\|$.
\end{enumerate}
where $L_H$ is the Lipschitz constant from Lemma~\ref{lem:lc_stability}(ii) and $K_{\text{sep}}$ is the basin-boundary Lipschitz constant.
\end{lemma}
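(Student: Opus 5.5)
The plan is to treat $\hat f = \tilde f + \sum_i \alpha_i^* \hat{\mathbf{\Phi}}_i$ as a $C^1$-small perturbation of $\tilde f$ and then feed the size of that perturbation into the earlier Lipschitz-type stability results. The first step bounds the perturbation in $C^1$. Let $B_\Phi \coloneqq \max_i \|\hat{\mathbf{\Phi}}_i\|_{C^1}$, which is finite by the smoothness clause of Lemma~\ref{lem:bump_realizability}. The triangle inequality then gives
\[
\|\hat f - \tilde f\|_{C^1} \le \sum_{i=1}^N |\alpha_i^*|\,\|\hat{\mathbf{\Phi}}_i\|_{C^1} \le N B_\Phi \|\boldsymbol{\alpha}^*\|_\infty \le N B_\Phi \|\boldsymbol{\alpha}^*\|.
\]
By Proposition~\ref{prop:correction_existence}, $\|\boldsymbol{\alpha}^*\| \le \kappa\|\Delta T\|$, and $\|\Delta T\|$ is $O(\eta)$ by Lemma~\ref{lem:lc_stability}(iii). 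So once $\eta$ is small, this perturbation lies below the radius $\delta_0$ of Lemma~\ref{lem:lc_stability} and below the structural stability radius of $\tilde f$. I would state this smallness condition explicitly as a hypothesis inherited from the construction.

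For the cycle shift, I would apply Lemma~\ref{lem:lc_stability} with $\tilde f$ as the reference field, which is legitimate because $\tilde\gamma_i$ is hyperbolic. Item (i) gives a unique nearby hyperbolic cycle $\hat\gamma_i$ of $\hat f$, and item (ii) gives $d_H(\tilde\gamma_i,\hat\gamma_i) \le L_H\|\hat f-\tilde f\|_{C^1}$. Substituting the first-step bound yields claim 1. One subtlety must be handled: the constant $L_H$ from Lemma~\ref{lem:lc_stability} was stated for the original $f$. I would argue that these constants depend continuously on the field near a hyperbolic cycle, since they come from the implicit function theorem applied to the Poincar\'e map. They can therefore be taken uniform over the $\eta$-ball around $f$, possibly after enlarging $L_H$.

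For the basin error, I would argue as in Lemma~\ref{lem:basin_error} but quantitatively. Structural stability (Theorem~\ref{thm:palis_smale}) gives a conjugating homeomorphism $h$ with $\|h-\mathrm{id}\|_{C^0}$ controlled by the perturbation size. Then $\mcB(\tilde f)\triangle\mcB(\hat f)$ is contained in the $\|h-\mathrm{id}\|$-neighborhood of the separatrix set $S$ of $\tilde f$.

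To get a linear bound, I would use the stronger fact that the separatrices are stable manifolds of saddle-type invariant sets, which depend $C^1$-Lipschitz on the field on compact pieces by the stable manifold theorem with parameters. The displacement of $S$ is then $\le C\|\hat f-\tilde f\|_{C^1}$. The volume of an $r$-tube around a compact codimension-one $C^1$ hypersurface is at most $2r\,\mathrm{area}(S)+o(r)$, which supplies $K_{\mathrm{sep}}$ in the spirit of Remark~\ref{rem:basin_scaling}.

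The main obstacle is this last step. The Lipschitz dependence of separatrices is only local, and the stable manifolds are noncompact near the boundary of $\mcX$ and near the saddles. I would first restrict to the forward-invariant compact region given by Lemma~\ref{lem:forward_invariance}. There, $S\cap\bar{\mcX}$ is a finite union of compact $C^1$ pieces. Lipschitz dependence holds on each piece by flowing a local stable manifold backward for a bounded time, using Gr\"onwall with a constant fixed by $\tilde f$. I would then absorb everything into $K_{\mathrm{sep}}$, taking the perturbation small enough that the $o(r)$ term is dominated.
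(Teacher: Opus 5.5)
Your proposal is correct at the paper's level of rigor and follows its proof: bound $\|\hat f-\tilde f\|_{C^1}\le N B_\Phi\|\boldsymbol{\alpha}^*\|$ by the triangle inequality, then feed this into Lemma~\ref{lem:lc_stability}(ii) for the cycle shift and into the Lipschitz dependence of the saddles' stable manifolds for the basin error. Your remarks on the uniformity of $L_H$, the smallness needed to invoke the lemmas, and the tube-volume origin of $K_{\text{sep}}$ fill in what the paper leaves implicit. One caution on that added detail: flowing local stable manifolds backward for a bounded time reaches only the parts of the separatrices away from their $\alpha$-limit sets (interior sources, repelling cycles, or other saddles), so near those you would still need a separate backward-time hyperbolicity (or $\lambda$-lemma) estimate, a point the paper's one-line citation also glosses over.
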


\begin{proof}
(i) The cycle shift follows from Lemma~\ref{lem:lc_stability}(ii) applied to the perturbation $\hat{f} - \tilde{f} = \sum_i \alpha_i^* \hat{\mathbf{\Phi}}_i$, which has $C^1$ norm $\le N B_\Phi \|\boldsymbol{\alpha}^*\|$ by Lemma~\ref{lem:bump_realizability}.
(ii) Basin boundaries between adjacent attractors are stable manifolds of intermediate saddles. Their position depends $C^1$-Lipschitz on the vector field~\citep[Theorem~5.7.5]{chicone2006ode}, so the symmetric-difference measure is bounded by $K_{\text{sep}}$ times the $C^1$ perturbation magnitude $N B_\Phi \|\boldsymbol{\alpha}^*\|$.
\end{proof}

\begin{proof}[Proof of Theorem~\ref{thm:uap_lc} (additive correction)]
\textbf{Step 1: Configuration (Fixing Constants).}
Set bump parameters $\nu = \frac{1}{3 C_Z}$ and $\zeta = \frac{1}{3 N C_Z}$. By Lemma~\ref{lem:robust_inv}, this ensures the period Jacobian is invertible.
By Lemma~\ref{lem:bump_realizability}, such bumps exist in $\hat{\mcF}$ with bound $B_{\Phi}$.

\textbf{Step 2: Base Approximation.}
We choose a base approximation $\tilde{f} \in \hat{\mcF}$ such that $\|f - \tilde{f}\|_{C^1} < \eta_{\mathrm{base}}$.
By Lemma~\ref{lem:lc_stability}(ii), the initial period error is bounded by $\|\Delta T\| \leq L_T \eta_{\mathrm{base}}$.
From Proposition~\ref{prop:correction_existence}, if a solution exists, the correction magnitude is bounded by $\|\boldsymbol{\alpha}^*\| \le \kappa L_T \eta_{\mathrm{base}}$.
We now determine the required $\eta_{\mathrm{base}}$ by enforcing four simultaneous constraints:

\textbf{Constraint A: Solvability.}
To invoke Proposition~\ref{prop:correction_existence}, we require $\|\Delta T\| < R_{sol}$. Using the structural stability bound:
\[ \eta_{\mathrm{base}} < \frac{R_{sol}}{L_T}. \]

\textbf{Constraint B: Basin Stability ($\delta$).}
The total basin error comes from the base approximation (structural stability) plus the separatrix shift due to the bump-function correction.
Since separatrix shifts scale with the $C^1$ norm of the perturbation, the relevant bound on the corrective term is $\|\sum_j \alpha_j^* \hat{\mathbf{\Phi}}_j\|_{C^1} \leq N B_\Phi \|\boldsymbol{\alpha}^*\|$ (Lemma~\ref{lem:bump_realizability}, in conjunction with Lemma~\ref{lem:geo_stability}(ii)).
Combining,
\[ K_{\text{sep}} \eta_{\mathrm{base}} + K_{\text{sep}} N B_\Phi (\kappa L_T \eta_{\mathrm{base}}) < \delta \implies \eta_{\mathrm{base}} < \frac{\delta}{K_{\text{sep}}(1 + N B_\Phi \kappa L_T)}. \]

\textbf{Constraint C: Trajectory Accuracy ($\varepsilon$).}
We require the asymptotic trajectory error to satisfy $\limsup_{t \to \infty} \|\varphi(t, x) - \hat{\varphi}(t, x)\| < \varepsilon$.
By the triangle inequality, this error is bounded by the sum of three contributions: the geometric distance between the limit cycles, the residual phase offset that gets frozen once periods match, and the asymptotic phase mismatch coming from non-equal isochron foliations.

\begin{enumerate}
    \item \textbf{Geometric Error:} The Hausdorff distance between the true cycle $\gamma$ and the corrected cycle $\hat{\gamma}$ satisfies:
    \[
    d_H(\gamma, \hat{\gamma}) \le \underbrace{d_H(\gamma, \tilde{\gamma})}_{\text{Base Error}} + \underbrace{d_H(\tilde{\gamma}, \hat{\gamma})}_{\text{Correction Shift}} \le L_H \eta_{\mathrm{base}} + L_H N B_{\Phi} \|\boldsymbol{\alpha}^*\|.
    \]

    \item \textbf{Transient trajectory divergence (Gr\"onwall).}
    On the transient interval $[0, T_{\text{trans}}]$ before trajectories lock onto their respective cycles, the difference $e(t) \coloneqq \varphi(t, x_0) - \hat{\varphi}(t, x_0)$ satisfies $\dot{e} = f(\varphi) - \hat{f}(\hat{\varphi})$, which decomposes as
    \[
    \dot{e} = \bigl( f(\varphi) - f(\hat{\varphi}) \bigr) + \bigl( f(\hat{\varphi}) - \hat{f}(\hat{\varphi}) \bigr).
    \]
    The first term is bounded in norm by $L\|e\|$ where $L = \Lip(f)$; the second by $\|f - \hat{f}\|_{C^0}$.
    Gr\"onwall's inequality~\citep{chicone2006ode} then yields
    \begin{equation}\label{eq:gronwall_transient}
    \|\varphi(t, x_0) - \hat{\varphi}(t, x_0)\| \;\leq\; \|f - \hat{f}\|_{C^0} \cdot \frac{e^{Lt} - 1}{L}, \qquad t \in [0, T_{\text{trans}}],
    \end{equation}
    where $\|f - \hat{f}\|_{C^0} \leq \eta_{\mathrm{base}} + N B_\Phi \|\boldsymbol{\alpha}^*\|$.
    Once exact period matching takes effect (Lemma~\ref{lem:multi_period_correction} or Proposition~\ref{prop:correction_existence}), this transient offset is frozen --- it neither amplifies nor decays --- and it bounds the asymptotic phase contribution.
    The Gr\"onwall amplification $G_T \coloneqq (e^{L T_{\text{trans}}} - 1)/L$ is finite for each fixed transient horizon, so this term is $O(\eta_{\mathrm{base}} + \|\boldsymbol{\alpha}^*\|)$ but with a Lipschitz-controlled multiplicative constant.

    \item \textbf{Asymptotic phase mismatch (isochron Floquet bound).}
    Beyond the transient, trajectories track their respective cycles at asymptotic phases $\Psi^f(x_0)$ and $\Psi^{\hat{f}}(x_0)$ (Definition~\ref{def:asymptotic_phase}).
    With exact period matching, the phase difference $\Delta\Psi \coloneqq \Psi^f(x_0) - \Psi^{\hat{f}}(x_0)$ is constant in $t$ and contributes to the trajectory error by $L_\gamma |\Delta\Psi|$, where $L_\gamma = \sup \|\dot{\gamma}\|$.
    By the same Floquet-smoothness argument used in Lemma~\ref{lem:phase_coherence} (Stage~2, applied directly to the isolated cycle pair $\gamma, \hat{\gamma}$),
    \[
    |\Delta\Psi| \leq C_\Psi \|f - \hat{f}\|_{C^1} \leq C_\Psi (\eta_{\mathrm{base}} + N B_\Phi \|\boldsymbol{\alpha}^*\|).
    \]
\end{enumerate}

Substituting the correction bound $\|\boldsymbol{\alpha}^*\| \le \kappa L_T \eta_{\mathrm{base}}$ and summing:
\[
\text{Total Error} \;\leq\; \eta_{\mathrm{base}} \cdot (1 + N B_\Phi \kappa L_T) \cdot \bigl( L_H + G_T + L_\gamma C_\Psi \bigr).
\]
To ensure the total error is strictly less than $\varepsilon$, we impose:
\begin{equation}\label{eq:constraint_C}
\eta_{\mathrm{base}} \;<\; \frac{\varepsilon}{(1 + N B_\Phi \kappa L_T)\bigl( L_H + G_T + L_\gamma C_\Psi \bigr)}.
\end{equation}
The Lipschitz amplification $G_T = (e^{L T_{\text{trans}}}-1)/L$ enters multiplicatively; for systems with large $L$ or long transients, this constant can dominate the budget, but it is finite and yields a positive lower bound on the achievable $\eta_{\mathrm{base}}$.

\textbf{Constraint D: Forward Invariance.}
To satisfy Lemma~\ref{lem:forward_invariance}, the total error must be less than $\gamma$:
\[ \|f - \hat{f}\| \le \|f - \tilde{f}\| + \|\hat{f} - \tilde{f}\| \le \eta_{\mathrm{base}}(1 + N B_{\Phi} \kappa L_T) < \gamma. \]

\textbf{Conclusion.}
Let $\eta^*$ be the minimum of the bounds derived in Constraints A--D.
By the $C^1$ UAP of $\hat{\mcF}$, there exists $\tilde{f}$ satisfying $\|f - \tilde{f}\|_{C^1} < \eta^*$.
This $\tilde{f}$ is sufficiently accurate that a period correction $\boldsymbol{\alpha}^*$ exists (A), maintains basin structure (B), ensures trajectory tracking (C), and preserves the domain (D).
\end{proof}

%==============================================================================
\section{Proof of Theorem~\ref{thm:uap_ca}}\label{app:proof_ca}

\begin{theorem}[Fenichel persistence~{\citep{fenichel1971persistence}}]\label{thm:fenichel}
Let $\mcM$ be a compact normally hyperbolic invariant manifold (NHIM) for $f$.
There exist $\delta_0 > 0$ and $L_F > 0$ (depending on $\mcM$) such that for $\|f - \hat{f}\|_{C^1} < \delta_0$:
\begin{enumerate}[label=(\roman*)]
\item $\hat{f}$ has a unique NHIM $\hat{\mcM}$ diffeomorphic to $\mcM$;
\item $d_H(\mcM, \hat{\mcM}) \leq L_F \|f - \hat{f}\|_{C^1}$.
\end{enumerate}
\end{theorem}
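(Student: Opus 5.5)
Since this is the classical persistence theorem, I would not reprove it from scratch in full generality. Instead I would follow the standard graph-transform route and record only the quantitative dependence needed for item (ii).

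\textbf{Step 1: coordinates.} Fix a small tubular neighborhood $N_\rho(\mcM)$ (Definition~\ref{def:tubular}) and coordinates $x = \exp_m(v)$, with $m \in \mcM$ and $v$ in a normal bundle $E = E^s \oplus E^u$. The splitting comes from Definition~\ref{def:nhim}. If it is only continuous, I would replace it by a nearby smooth splitting, at the cost of slightly weaker rates. Then fix a time $\tau > 0$ so large that the time-$\tau$ map $\Phi = \varphi(\tau,\cdot)$ has the following properties on $N_\rho(\mcM)$:
\begin{itemize}
\item it contracts $E^s$ by a factor $a < 1$;
\item it expands $E^u$ by a factor $a^{-1}$;
\item it distorts $\mcM$ tangentially by at most $b$, with $ab < 1$ (rate dominance).
\end{itemize}
For $\|f - \hat f\|_{C^1} < \delta_0$, Gr\"onwall on $[0,\tau]$ gives $\|\Phi - \hat\Phi\|_{C^1} \le C_\tau \|f - \hat f\|_{C^1}$. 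Hence $\hat\Phi$ satisfies the same cone and rate estimates with slightly worse constants.

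\textbf{Step 2: contraction.} I would look for $\hat\mcM$ as the graph of a section $h = (h^s, h^u) : \mcM \to E$ with $\|h\|_{C^0} \le \rho$ and $\Lip(h) \le 1$. The graph transform is defined by pushing the stable component forward with $\hat\Phi$ and pulling the unstable component back with $\hat\Phi^{-1}$. Invariance of the Lipschitz-one cone under the transform follows from dominance. Contraction in the $C^0$ metric, with constant $\kappa = \max(a, a)\cdot(1+O(\delta_0)) < 1$, follows from normal hyperbolicity. Banach's fixed point theorem then yields a unique invariant Lipschitz graph $h^*$. Uniqueness in the statement follows because any invariant set in $N_\rho(\mcM)$ close to $\mcM$ must be such a graph: points off the graph are pushed out of the neighborhood in forward or backward time. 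The map $m \mapsto \exp_m(h^*(m))$ then gives the diffeomorphism $\mcM \cong \hat\mcM$, once $h^*$ is known to be $C^1$.

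\textbf{Step 3: the Lipschitz bound.} Since the zero section is fixed by the unperturbed transform, the displacement of the perturbed transform satisfies $\|\hat{\mathcal{T}}(0)\|_{C^0} \le C_\tau \|f-\hat f\|_{C^0}$. The contraction estimate then gives
\[
\|h^*\|_{C^0} \le \frac{C_\tau}{1-\kappa}\,\|f - \hat f\|_{C^1}.
\]
The Hausdorff distance $d_H(\mcM,\hat\mcM)$ is bounded by $\|h^*\|_{C^0}$ up to the metric distortion of the exponential map, which gives $L_F$.

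\textbf{Main obstacle: $C^1$ regularity of $h^*$.} This is what ``diffeomorphic'' requires, and it is harder than the existence argument. I would use the fiber-contraction theorem applied to the pair (section, candidate derivative). The derivative transform contracts on bundle maps precisely because of the spectral gap $ab < 1$. The delicate point is making the induced map on derivatives continuous when $\mcM$ and the splitting are only $C^1$. If this becomes too technical, I would fall back on citing \citep{fenichel1971persistence} for regularity and keep only the quantitative $C^0$ estimate above as the paper-specific content.
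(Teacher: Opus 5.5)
The paper does not prove this statement. It quotes it from \citet{fenichel1971persistence} as a black box, and uses it downstream (Lemma~\ref{lem:lc_stability} and the proof of Theorem~\ref{thm:uap_ca}) only through the linear constant $L_F$ in (ii).

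You instead reconstruct the standard Hirsch--Pugh--Shub route: a graph transform for the time-$\tau$ map, invariance of the Lipschitz cone from dominance, and $C^0$ contraction from normal hyperbolicity. You defer $C^1$ regularity to the same citation, so for (i) you rely on the literature no more than the paper does.

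What your route adds is Step~3. In its classical form Fenichel's theorem only says that $\hat\mcM$ is $C^r$-close to $\mcM$ as $\hat f\to f$. Your estimate $\|h^*\|_{C^0}\le\|\hat{\mathcal T}(0)\|_{C^0}/(1-\kappa)$ instead gives the linear bound with an explicit $L_F$: the Gr\"onwall constant on $[0,\tau]$, times the exponential-map distortion, divided by $1-\kappa$. Once $\kappa<1$ is fixed uniformly, it needs only $C^0$ closeness of the time-$\tau$ maps. Note that normal hyperbolicity of $\hat\mcM$ itself, not just its invariance, needs $T\hat\mcM$ close to $T\mcM$. So that conclusion also sits inside the deferred $C^1$ part.

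A few points to tighten.
\begin{itemize}
\item The bound $\|\Phi-\hat\Phi\|_{C^1}\le C_\tau\|f-\hat f\|_{C^1}$ requires $Df$ to be Lipschitz. For a merely $C^1$ field the variational equation gives only $\|\Phi-\hat\Phi\|_{C^1}\to0$, through the modulus of continuity of $Df$. This is harmless: the cone and rate estimates need only smallness, and Step~3 uses only the genuinely linear $C^0$ bound.
\item In the saddle case, a fixed point $h^*$ of your mixed transform does not by itself give invariance. With $G=\mathrm{graph}\,h^*$ it yields only $\hat\Phi(G)\subset\{(m,v^s,v^u): v^s=h^{*s}(m)\}$ and $\hat\Phi^{-1}(G)\subset\{(m,v^s,v^u): v^u=h^{*u}(m)\}$, not $\hat\Phi(G)=G$. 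Either add an argument that closes this, or follow Hirsch--Pugh--Shub: construct the center-stable and center-unstable manifolds separately and intersect them. For the normally attracting manifolds of $\mcF_{\mathrm{CA}}$ one has $E^u=0$, and the issue disappears.
\item Invariance under the flow $\hat\varphi_t$, as opposed to under $\hat\Phi$, follows from your maximal-invariant-set characterization, because $\hat\varphi_t$ commutes with $\hat\Phi$. Say this explicitly.
\item Uniqueness holds only inside $N_\rho(\mcM)$, and that is how (i) has to be read.
\item Step~2 uses that $m\mapsto\hat\Phi_c(m,h(m))$ is a homeomorphism of $\mcM$, where $\hat\Phi_c$ is the $\mcM$-component of $\hat\Phi$ in tubular coordinates. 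This requires $\mcM$ to be boundaryless. For a segment line attractor, or the cylinder $S^1\times[0,1]$ the paper mentions, one needs Fenichel's overflowing-invariance hypothesis, and uniqueness is then lost.
\end{itemize}
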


\begin{theorem}[Kupka-Smale Density Theorem~\citep{kupka1963contribution}]\label{thm:kupka_smale}
Let $\mcX$ be a compact smooth manifold and $\mathfrak{X}^r(\mcX)$ be the space of $C^r$ vector fields ($r \ge 1$) endowed with the $C^r$ topology.
The set of vector fields $f \in \mathfrak{X}^r(\mcX)$ whose periodic orbits are all hyperbolic and whose stable/unstable manifolds intersect transversally is residual (and thus dense) in $\mathfrak{X}^r(\mcX)$.
\end{theorem}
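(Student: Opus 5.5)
The plan is the classical Baire-category argument. The space $\mathfrak{X}^r(\mcX)$ with the $C^r$ topology is a complete metric space, hence a Baire space, so it suffices to write the Kupka–Smale set as a countable intersection of open dense sets. For $T > 0$ let $\mathcal{G}(T)$ be the set of $f$ such that:
\begin{enumerate}[label=(\roman*)]
\item every fixed point is hyperbolic;
\item every periodic orbit of period $\leq T$ is hyperbolic;
\item the compact pieces of the stable and unstable manifolds of these critical elements intersect transversally, where the pieces are the images of fundamental domains flowed for time at most $T$.
\end{enumerate}
Then the Kupka–Smale set equals $\bigcap_{T\in\mathbb{N}} \mathcal{G}(T)$, and the theorem follows once each $\mathcal{G}(T)$ is shown to be open and dense.

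\emph{Openness.} I would argue openness first, since it is the softer half. Hyperbolic fixed points are isolated, so on the compact $\mcX$ there are finitely many. By compactness and hyperbolicity, the orbits of period $\leq T$ are also finitely many. All of them persist under small $C^1$ (hence $C^r$) perturbations, by the implicit function theorem applied to $f$ and to the Poincaré maps (cf.\ Lemma~\ref{lem:lc_stability}). No new nonhyperbolic orbit of period $\leq T$ can appear nearby: otherwise a limiting argument would produce a nonhyperbolic orbit, or a fixed point, for $f$ itself. The compact local invariant manifolds vary continuously in $C^1$ by the Stable Manifold Theorem~\ref{thm:stable_manifold}, and transversality of compact pieces is an open condition.

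\emph{Density.} I would proceed in stages, each achieved by an arbitrarily small, localized perturbation.
\begin{enumerate}[label=(\alph*)]
\item Fixed points: apply Sard's theorem, or the parametric transversality theorem to the family $f - c$ modified by a bump function. This makes $f$ transverse to the zero section, so every fixed point is nondegenerate. A further small linear perturbation then moves eigenvalues off the imaginary axis.
\item Periodic orbits: argue inductively on the period, passing from $\mathcal{G}(T/2)$ to $\mathcal{G}(T)$ in the style of Kupka. Given hyperbolicity up to period $T/2$, the orbits with period in $[T/2, T]$ stay away from the fixed points, so they admit finitely many local cross sections covering them. On each section I would perturb $f$ in a thin flow box so that the graph of the Poincaré return map becomes transverse to the diagonal. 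This is again parametric transversality, using a finite-dimensional family of bump perturbations whose effect on the return map is a submersion. Such orbits are then isolated, and a further perturbation of the linearized return map makes them hyperbolic.
\item Transversality of manifolds: with finitely many hyperbolic critical elements of period $\leq T$, take a fundamental domain of each unstable manifold. Perturbing $f$ in a small tube around it rotates $W^u$ through a finite-parameter family, and parametric transversality gives transversal intersection with the compact pieces of $W^s$. Since the perturbation is supported away from the critical elements, it does not destroy (a) and (b).
\end{enumerate}

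I expect stage (b) to be the main obstacle. The difficulty is controlling orbits of period close to $T$ that are produced during the perturbation, or that pass near each other and through the same flow box, so that a single perturbation could spoil hyperbolicity already achieved elsewhere. The tool I would try first is Kupka's device. Orbits of period in $[T/2, T]$ cannot be multiple covers of hyperbolic short orbits, so each of them crosses a fixed local section only a bounded number of times. A finite cover by flow boxes then lets the perturbations be made successively. The open condition secured at each earlier step survives the later ones as long as the later perturbations are taken small enough, which is where openness is used inside the density argument.
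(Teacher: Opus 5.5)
\textbf{Overall assessment.} The paper does not prove this statement; it only cites Kupka. Your outline is the classical Baire-category proof of Kupka, Smale and Peixoto (as in Palis--de~Melo, Ch.~3): open dense sets $\mathcal{G}(T)$, induction on the period via flow-box perturbations of the return map, then transversality of compact pieces. That architecture is right.

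\textbf{The gap.} Stage~(c) fails as written at low regularity, in particular for $r=1$, which the statement explicitly includes. There you invoke parametric transversality for a map $\Phi(x,v)$ on (a fundamental domain of) $W^u(\sigma_1)$ times a parameter ball $B$, relative to $W^s(\sigma_2)$. The proof of that theorem applies Sard's theorem to the projection $\Phi^{-1}(W^s(\sigma_2))\to B$. Its dimension excess is the dimension of the connecting set modulo the flow, $\dim W^u(\sigma_1)+\dim W^s(\sigma_2)-n-1$. This is positive as soon as connections are two-dimensional, e.g.\ two saddles on a $4$-manifold with $\dim W^u(\sigma_1)=\dim W^s(\sigma_2)=3$. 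Sard then needs at least $C^2$ (Whitney's $C^1$ counterexample), whereas for a $C^1$ field $W^u$, $W^s$ and $\Phi$ are only $C^1$. Stages~(a) and~(b) are unaffected: there the maps $f-c$ and $P-\mathrm{id}$ are equidimensional, so $C^1$ suffices. The standard repair is short. Each $\mathcal{G}(T)$ is $C^1$-open and $\mathfrak{X}^\infty(\mcX)$ is $C^r$-dense. So it suffices to approximate $C^\infty$ fields using $C^\infty$ bump perturbations throughout, which keeps every invariant manifold and $\Phi$ smooth.

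\textbf{Two smaller points.}

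\emph{Single crossing in (b).} Your translation family is a submersion on the return map only if a sufficiently small flow box is crossed \emph{exactly once} per period. This must hold for every orbit of minimal period in $(T/2,T]$, for $f$ and uniformly for nearby fields. With two crossings the perturbation is composed with itself: $P_c=\tau_c\circ Q_2\circ\tau_c\circ Q_1$ has $c$-derivative $I+DQ_2$, which can be singular. The justification is not that these orbits fail to be multiple covers. It is a compactness argument: a near-return at a time bounded away from $0$ and from the period would, in the limit, give a critical element of period $\le T/2$ approached by orbits of bounded period. That is impossible, because near a hyperbolic critical element the only such orbit is the element itself, uniformly for nearby fields (its section point is the unique fixed point of $P^k$, with $DP^k-I$ invertible).

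\emph{Moving stable manifold in (c).} The connecting orbits pass through your tube, so the perturbation moves $W^s(\sigma_2)$ as well. Check transversality against $W^s_{\mathrm{loc}}(\sigma_2)$ instead, which is unchanged if the tube avoids a neighborhood of $\sigma_2$. First shrink the tube so that orbits leaving it do not re-enter within time $T$.
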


\begin{remark}[Intuition: Genericity and Discretization]
The Kupka-Smale theorem essentially states that dynamical features—such as non-hyperbolic cycles or coincidental manifold intersections—are rare and fragile. 
If you randomly perturb a dynamical system, these degenerate features break apart into robust, hyperbolic structures.
For our proof, this provides a rigorous justification for \textbf{discretization}. 
While the original system $f$ may possess a continuum of non-isolated periodic orbits (a degenerate feature common in isochronous systems), there exists an arbitrarily close system $f^\dagger$ where this continuum has broken down into a finite skeleton of isolated, stable limit cycles. 
This allows us to approximate the continuous manifold problem with a finite number of fixed points or limit cycle problem, knowing the error between them is negligible ($\eta_{\mathrm{model}}$).
\end{remark}

Before proving Case~2, we record two ingredients that the existing argument tacitly assumed but never bounded: the \emph{asymptotic phase function} of a hyperbolic limit cycle, and the closeness of the target's and approximator's phase functions on each tile.

\begin{definition}[Asymptotic phase function]\label{def:asymptotic_phase}
Let $g \in \mathfrak{X}^1(\mcX)$ have a hyperbolic limit cycle $\gamma$ of period $T$ with basin $\boa(\gamma)$.
Fix a base point $p_0 \in \gamma$ and write $\gamma(\tau)$ for the orbit through $p_0$ at $\tau = 0$.
The \textbf{asymptotic phase function} $\Psi^g: \boa(\gamma) \to \reals/T\reals$ is the unique continuous map satisfying
\[
\lim_{t \to \infty} \bigl\| \varphi^g(t, x_0) - \gamma(t + \Psi^g(x_0)) \bigr\| = 0
\qquad \text{for all } x_0 \in \boa(\gamma).
\]
The level sets of $\Psi^g$ are the \emph{isochrons} of $\gamma$.
For an isochronous NHIM $\mcM$ foliated by orbits of common period $T$, the phase function $\Psi^g$ extends continuously to all of $\mcM$ and to a tubular neighborhood thereof where the normal contraction is well-defined.
\end{definition}

\begin{lemma}[Phase coherence under tiling]\label{lem:phase_coherence}
Let $f \in \mcF_{\mathrm{CA}}$ have isochronous NHIM $\mcM$ with global phase function $\Psi^f$ (Definition~\ref{def:asymptotic_phase}), and let $\hat{f}$ be the corrected approximator constructed in Steps~1--3 of the proof of Theorem~\ref{thm:uap_ca} Case~2, with discrete cycles $\{\hat{\gamma}_i\}_{i=1}^N$ forming an $\epsilon_{\mathrm{tile}}$-net of $\mcM$ and per-cycle phase functions $\Psi^{\hat{f}}_i$ defined on each basin $\boa(\hat{\gamma}_i)$.

There exist constants $C_\Psi, \rho_0 > 0$ depending only on $f$ such that for every $x_0$ lying in some $\boa(\hat{\gamma}_i)$ at distance $\geq \rho_0$ from $\partial \boa(\hat{\gamma}_i)$,
\[
\bigl| \Psi^f(x_0) - \Psi^{\hat{f}}_i(x_0) \bigr|
\;\leq\; C_\Psi \cdot \bigl( \|\hat{f} - f\|_{C^1} + \epsilon_{\mathrm{tile}} \bigr).
\]
\end{lemma}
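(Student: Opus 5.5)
I would bound the phase discrepancy by passing through an intermediate object: the phase function of the isolated tile cycle for the \emph{target-side} tiled field $f_{\mathrm{tile}}$ constructed in Case~2. Concretely, let $\Psi^{\mathrm{tile}}_i$ denote the asymptotic phase of the cycle $\gamma_i \subset \mcM$ for $f_{\mathrm{tile}}$, defined on $\boa(\gamma_i)$. The triangle inequality then splits the claim into two terms,
\[
\bigl|\Psi^f(x_0) - \Psi^{\hat f}_i(x_0)\bigr| \le \bigl|\Psi^f(x_0) - \Psi^{\mathrm{tile}}_i(x_0)\bigr| + \bigl|\Psi^{\mathrm{tile}}_i(x_0) - \Psi^{\hat f}_i(x_0)\bigr|.
\]
I would bound the first term by $C\,\epsilon_{\mathrm{tile}}$ and the second by $C\,\|\hat f - f_{\mathrm{tile}}\|_{C^1}$. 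Since $\|f_{\mathrm{tile}} - f\|_{C^1}$ is itself controlled by the tiling construction, this yields the stated form with a single constant $C_\Psi$.

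\textbf{Stage~1: tiling term.} Because $\mcM$ is isochronous with common period $T$, $\Psi^f$ is defined on a tubular neighborhood $U$ of $\mcM$. It is $C^1$ there, as the composition of the $C^1$ stable-fiber projection $\pi: U \to \mcM$ of Fenichel theory (Theorem~\ref{thm:fenichel}) with the on-manifold phase. The tiled field agrees with $f$ in the directions along the orbits. It differs only in the transverse-to-orbit, tangent-to-$\mcM$ direction, where it adds a weak contraction toward $\gamma_i$. Consequently, a point $x_0$ is carried first to $\pi(x_0)$ and then slid within $\mcM$ to $\gamma_i$ across a distance of at most $\epsilon_{\mathrm{tile}}$. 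Two facts drive the estimate. First, isochrony makes the flow on $\mcM$ commute with this sliding up to an error of $O(\epsilon_{\mathrm{tile}})$. Second, the gradient $\nabla\Psi^f$ is uniformly bounded on the compact set $\overline U$. Together these give $|\Psi^f - \Psi^{\mathrm{tile}}_i| \le C_1 \epsilon_{\mathrm{tile}}$.

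\textbf{Stage~2: perturbation term.} Here both fields possess hyperbolic cycles $\gamma_i$ and $\hat\gamma_i$ with equal periods after the correction of Theorem~\ref{thm:uap_lc}. On the compact set of points at distance at least $\rho_0$ from $\partial\boa(\hat\gamma_i)$, Lemma~\ref{lem:traj_bound}-style reasoning gives a uniform transient time $T^*$. Over $[0, T^*]$, Gr\"onwall's inequality bounds the trajectory difference by $K_1\|\hat f - f_{\mathrm{tile}}\|_{C^0}$. After $T^*$ both trajectories lie in the tubular neighborhood, where the isochron foliation (the stable fibers of the cycle) is a $C^1$ invariant foliation depending Lipschitz-continuously on the field in $C^1$ (Fenichel / invariant-foliation persistence). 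Reading off the phase of $\hat\varphi(T^*, x_0)$ under both foliations, and accounting for the Hausdorff shift of Lemma~\ref{lem:lc_stability}, gives a $C^1$-Lipschitz bound. Since the periods coincide, no drift accrues afterwards.

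\textbf{Main obstacle.} The hardest step is making the constants depend \emph{only on $f$} rather than on the tiling. As $\epsilon_{\mathrm{tile}} \to 0$, the tangential contraction rate toward each $\gamma_i$ degenerates, so the hyperbolicity of the tile cycles weakens, and both the Lipschitz constant of the isochron foliation and $T^*$ could blow up. My first attempt would exploit the fact that the phase variable decouples from the tangential coordinate on an isochronous $\mcM$. The phase then depends only on the strongly normally hyperbolic fibers of $\mcM$ (uniform in the tiling) and on the orbital coordinate, not on the weak tangential contraction. I would therefore work with the $C^1$ product structure (normal fiber) $\times$ (position on $\mcM$) and bound the phase along each factor separately. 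If that fails, I would allow $\rho_0$ and $C_\Psi$ to depend on the tiling, and let $\|\hat f - f\|_{C^1}$ be chosen after $\epsilon_{\mathrm{tile}}$, which suffices for Theorem~\ref{thm:uap_ca}.
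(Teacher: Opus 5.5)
Your skeleton is the paper's. It is a triangle inequality through the tiled proxy (your $f_{\mathrm{tile}}$ is the paper's $f^\dagger$), with an $O(\epsilon_{\mathrm{tile}})$ first term and a second term controlled by Lipschitz dependence of the asymptotic phase on the field. Your Stage~2 (Gr\"onwall up to $T^*$, then persistence of the isochron foliation) essentially unpacks the paper's estimate~\eqref{eq:phase_lipschitz}.

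The real difference is Stage~1. The paper applies~\eqref{eq:phase_lipschitz} to $f$ versus $f^\dagger$, pairing $\gamma_i^\dagger$ with the nearest target orbit $\gamma_{\theta_i^*}$, and then adds $C_1\epsilon_{\mathrm{tile}}$ from in-manifold Lipschitz continuity of $\Psi^f$. You instead factor $\Psi^f$ through the Fenichel fiber projection and use isochrony to bound the phase picked up during the slow in-manifold slide. Yours is the sounder route. The orbits of $f$ on $\mcM$ are not hyperbolic (Floquet multipliers equal $1$ along $\mcM$), and the set of points asymptotic to any one of them is a union of fibers with empty interior, so~\eqref{eq:phase_lipschitz} does not literally apply to them. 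Your coefficient of $\epsilon_{\mathrm{tile}}$, by contrast, can be made to depend only on $f$.

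To complete Stage~1 you still need two things.
(i) Account for the displacement of $\mcM$ and of its stable fibers between $f$ and $f^\dagger$. This gives a term linear in $\|f-f^\dagger\|_{C^1}$ only when the fiber projection is Lipschitz; for a merely $C^1$ field Fenichel theory gives, in general, only a H\"older foliation, so your ``$C^1$ stable-fiber projection'' is an extra assumption.
(ii) Make explicit that ``commutes up to $O(\epsilon_{\mathrm{tile}})$'' relies on the structure of the tiling drift (transverse to the orbits, constant along them). The phase picked up during the slide is the integral of the ratio of phase drift to transverse drift over a transverse distance of at most $\epsilon_{\mathrm{tile}}$; the $C^1$ size of $f^\dagger-f$ alone does not bound it.

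The clause that $C_\Psi,\rho_0$ depend only on $f$ is a genuine gap, but you share it with the paper. Its constant $C_\Psi=\max\bigl(C(\rho_0)(1+\kappa N L_T B_\Phi),\,C_1\bigr)$ involves the number of tiles $N$, the inverse-function constant $\kappa$, and $B_\Phi$. It also involves a $C(\rho_0)$ that, applied to the tile cycles, depends on their weak hyperbolicity. Case~2 of Theorem~\ref{thm:uap_ca} also chooses $\rho_0$ after the tiling. So your fallback is what the paper's argument actually proves.

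Decoupling will not give you more.
\begin{itemize}
\item For $\rho_0$ it cannot. Near $\mcM$ each $\boa(\hat\gamma_i)$ is a cell of width $O(\epsilon_{\mathrm{tile}})$ transverse to the orbits, bounded by stable manifolds of interleaved saddle-type cycles. A fixed $\rho_0$ therefore excludes essentially the whole neighborhood of $\mcM$ once $\epsilon_{\mathrm{tile}}\lesssim\rho_0$, and the requirement $\mu(\mcX\setminus G)<\delta$ in Theorem~\ref{thm:uap_ca} forces $\rho_0$ to shrink with $\epsilon_{\mathrm{tile}}$.
\item For Stage~2 it does not apply. $\hat f-f^\dagger$ is unstructured, so it makes the phase velocity depend on the transverse coordinate at order $\|\hat f-f^\dagger\|_{C^1}$. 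This is integrated over the slow relaxation time $1/\lambda_z$, where $\lambda_z\lesssim\|f-f^\dagger\|_{C^1}$ is the tiles' tangential contraction rate. The resulting sensitivity is of order $\epsilon_{\mathrm{tile}}/\lambda_z$ and grows logarithmically at cell edges.
\end{itemize}

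When you fall back, keep the coefficients separate: $|\Psi^f-\Psi^{\hat f}_i|\le C_1\epsilon_{\mathrm{tile}}+C_2\|\hat f-f^\dagger\|_{C^1}$, with $C_1$ depending only on $f$ and $C_2$ on the tiling and $\rho_0$. Theorem~\ref{thm:uap_ca} then closes by fixing $\epsilon_{\mathrm{tile}}$, then $\rho_0$, then the base accuracy. A single tiling-dependent $C_\Psi$ in front of $\epsilon_{\mathrm{tile}}$ would not guarantee that $C_\Psi\epsilon_{\mathrm{tile}}$ is small. Likewise, once $f^\dagger$ is fixed, the quantity you can drive to zero is $\|\hat f-f^\dagger\|_{C^1}$, not $\|\hat f-f\|_{C^1}$.
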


\begin{proof}
The phase function for a hyperbolic limit cycle depends $C^1$-Lipschitz on the generating vector field at points bounded away from the cycle's basin boundary: this is a standard consequence of Floquet theory and the smooth dependence of the stable foliation on parameters~\citep[\S~III.7]{guckenheimer1983nonlinear}.
Quantitatively, if $g_0, g_1$ are $C^1$-close vector fields with hyperbolic cycles $\gamma_0, \gamma_1$ sharing a common basin region $\mcV$ at distance $\geq \rho_0$ from any separatrix, and with phase functions $\Psi^{g_0}, \Psi^{g_1}$, then
\begin{equation}\label{eq:phase_lipschitz}
\sup_{x \in \mcV} |\Psi^{g_0}(x) - \Psi^{g_1}(x)|
\;\leq\; C(\rho_0) \cdot \|g_0 - g_1\|_{C^1}.
\end{equation}

We apply~\eqref{eq:phase_lipschitz} in two stages, comparing $f$ to $\hat{f}$ via the proxy $f^\dagger$ and the base approximator $\tilde{f}$.

\textbf{Stage 1: $f \to f^\dagger$.}
By Fenichel persistence (Theorem~\ref{thm:fenichel}) applied to $\mcM$, the perturbed manifold $\mcM^\dagger$ satisfies $d_H(\mcM, \mcM^\dagger) \leq L_F \eta_{\mathrm{model}}$.
Let $\theta_i^* \in \Theta$ index the original target cycle $\gamma_{\theta_i^*}$ chosen as the cycle on $\mcM$ closest to the projection of $\gamma_i^\dagger$ onto $\mcM$.
Since $\gamma_i^\dagger \subset \mcM^\dagger$, we have $d_H(\gamma_i^\dagger, \gamma_{\theta_i^*}) = O(\eta_{\mathrm{model}})$: the transverse component is bounded by $L_F \eta_{\mathrm{model}}$ via the Fenichel bound, and the in-manifold component is absorbed into the same $O(\eta_{\mathrm{model}})$ since $\gamma_{\theta_i^*}$ is chosen optimally.
Applying~\eqref{eq:phase_lipschitz} on the basin region $\mcV_i$,
\[
\sup_{x \in \mcV_i} \bigl| \Psi^f_{\theta_i^*}(x) - \Psi^{f^\dagger}_i(x) \bigr|
\;\leq\; C(\rho_0) \cdot \eta_{\mathrm{model}},
\]
where $\Psi^f_{\theta_i^*}$ denotes the restriction of the global $\Psi^f$ to the foliation basin of $\gamma_{\theta_i^*}$.
Since $\Psi^f$ is Lipschitz on $\mcM$ in the in-manifold direction,
\[
|\Psi^f(x) - \Psi^f_{\theta_i^*}(x)|
\;\leq\; \mathrm{Lip}_{\mcM}(\Psi^f) \cdot \dist_{\mcM}(x, \gamma_{\theta_i^*})
\;\leq\; C_1 \epsilon_{\mathrm{tile}}.
\]
Combining,
\[
\sup_{x \in \mcV_i} \bigl| \Psi^f(x) - \Psi^{f^\dagger}_i(x) \bigr|
\;\leq\; C(\rho_0) \eta_{\mathrm{model}} + C_1 \epsilon_{\mathrm{tile}}.
\]

\textbf{Stage 2: $f^\dagger \to \hat{f}$.}
The corrected approximator $\hat{f} = \tilde{f} + \sum_j \alpha_j^* \hat{\mathbf{\Phi}}_j$ satisfies $\|\hat{f} - f^\dagger\|_{C^1} \leq \eta_{\mathrm{base}} + N \|\boldsymbol{\alpha}^*\| B_\Phi$ by Lemma~\ref{lem:bump_realizability} and Step~3.
Applying~\eqref{eq:phase_lipschitz} once more on each basin,
\[
\sup_{x \in \mcV_i} \bigl| \Psi^{f^\dagger}_i(x) - \Psi^{\hat{f}}_i(x) \bigr|
\;\leq\; C(\rho_0) \cdot (\eta_{\mathrm{base}} + N\|\boldsymbol{\alpha}^*\| B_\Phi).
\]

Triangle inequality gives the claim with $C_\Psi = \max\bigl(C(\rho_0)(1 + \kappa N L_T B_\Phi),\, C_1\bigr)$, recalling $\|\boldsymbol{\alpha}^*\| \leq \kappa L_T \eta_{\mathrm{base}}$ from Proposition~\ref{prop:correction_existence}.
\end{proof}

The proof of Theorem~\ref{thm:uap_ca} splits into two cases.

\begin{proof}[Proof of Case 1: Manifold of Fixed Points]
Let $\mathcal{M}$ be a NHIM consisting entirely of fixed points, i.e., $f(x) = 0$ for all $x \in \mathcal{M}$.

\textbf{Step 1: Target Selection and Constructive Discretization.} 
The continuum of fixed points is structurally unstable. 
While the Kupka-Smale theorem guarantees the density of structurally stable vector fields, it does not explicitly guarantee that a generic perturbation will yield a spatially dense $\epsilon$-net of attractors. Therefore, we explicitly construct a structurally stable proxy field $f^{\dagger}$.
For any compact continuous attractor $\mcM$, there exists a Morse function $V: \mcX \to \mathbb{R}$ whose local minima on $\mcM$ form an $\varepsilon/6$-net. 

We construct the proxy field by applying a global gradient perturbation: $f^{\dagger}(x) = f(x) - \eta_{\mathrm{tile}} \nabla V(x)$. Generating a dense $\varepsilon/6$-net requires $V$ to have high-frequency spatial oscillations, which increases the $C^1$ norm of $\nabla V$ proportionally to $1/\varepsilon^2$. 
However, because the spatial configuration is fixed by $\varepsilon$ independently of the scaling amplitude $\eta_{\mathrm{tile}}$, we can choose $\eta_{\mathrm{tile}} > 0$ to be arbitrarily small. This ensures that the global perturbation remains strictly bounded by $\|\eta_{\mathrm{tile}} \nabla V\|_{C^1} < \eta_{model}$.

Because this perturbation is globally $C^1$-small, it preserves the macroscopic dynamics: the transient approach to the manifold remains largely unperturbed, and global basin boundaries (separatrices) are structurally maintained. 
Thus, for any $\eta_{model}>0$ and $\varepsilon>0$, we construct $f^{\dagger}$ such that:
\begin{enumerate}
\item \textbf{Closeness:} $\|f-f^{\dagger}\|_{C^1} < \eta_{model}$.
\item \textbf{Persistence:} The manifold $\mcM$ persists as a perturbed invariant manifold $\mcM^{\dagger}$.
\item \textbf{Discretization:} The flow on $\mcM^{\dagger}$ possesses a finite set of hyperbolic fixed points $\{\hat{x}_i\}_{i=1}^N$ forming an $\varepsilon/6$-net of $\mcM$. 
The remaining $\epsilon/6$ in our spatial budget safely accounts for the structural shift of these equilibria when the final approximation $\tilde{f} \in \hat{\mcF}$ targets $f^{\dagger}$.\end{enumerate}

\textbf{Step 2: Robust Approximation.}
We target the stable proxy $f^\dagger$. By the $C^1$ UAP, choose $\tilde{f} \in \hat{\mathcal{F}}$ such that $\|\tilde{f} - f^\dagger\|_{C^1} < \eta_{\mathrm{base}}$.
Since the fixed points $\{\hat{x}_i\}$ of $f^\dagger$ are hyperbolic, they are structurally stable. 
By the Implicit Function Theorem, $\tilde{f}$ possesses exactly $N$ fixed points $\{\tilde{x}_i\}$ close to the proxy points.

\textbf{Step 3: Global Error Decomposition.}
For any initial condition $x_0$ in the basin of attraction, the trajectory error $\|\varphi(t) - \hat{\varphi}(t)\|$ is bounded by three components, allocated a budget of $\varepsilon/3$ each:\begin{enumerate}
\item \textbf{Manifold Error (Transverse):}
The physical distance between the true manifold $\mathcal{M}$ and the approximated manifold $\hat{\mathcal{M}}$. 
Controlled by Fenichel persistence (\ref{thm:fenichel}):
$$d_H(\mathcal{M}, \hat{\mathcal{M}}) \le L_{\mathrm{NHIM}} (\eta_{\mathrm{model}} + \eta_{\mathrm{base}}) < \frac{\varepsilon}{3}.$$

\item \textbf{Tiling Error (Longitudinal):} The error of approximating a stationary point $p \in \mcM$ with the nearest discrete attractor $\tilde{x}_i \in \hat{\mcM}$. 
Since the proxy points form an $\varepsilon/6$-net, and the approximation shift is bounded by $O(\eta_{base}) < \epsilon/6$, any trajectory on the manifold will converge to a sink within distance $\epsilon/3$:
$$ \lim_{t \to \infty} \|\varphi(t) - \hat{\varphi}(t)\| \le \max_i \text{diam}(\text{basin}(\tilde{x}_i) \cap \mcM) < \frac{\varepsilon}{3}. $$

\item \textbf{Transient and asymptotic trajectory error (Gr\"onwall + variation of constants).}
This component controls the deviation accrued while the trajectories \emph{approach} the invariant manifold transversally; the subsequent \emph{in-manifold} drift from the landing point to the nearest tile equilibrium is not treated here, since it is already charged to the Longitudinal component above, where it is bounded statically by the tile spacing ($< \varepsilon/3$) with no exponential factor. Separating the two is essential: the transverse approach is fast and amplitude-independent, whereas the in-manifold settling is slow, its rate being the $O(\eta_{\mathrm{tile}})$ tile drift.

\textit{Transient phase $[0, T^*_\perp]$.}
Let $T^*_\perp$ be the uniform time for trajectories from the valid set to enter a fixed-radius Fenichel neighborhood of $\hat{\mcM}$. This transverse approach is governed by the normal-hyperbolicity contraction rate $\lambda_\perp > 0$ of $\mcM$, which persists under the $O(\eta_{\mathrm{model}} + \eta_{\mathrm{base}})$ perturbation (Theorem~\ref{thm:fenichel}) and is bounded below by a constant depending only on $\mcM$, \emph{independent of the tiling amplitude $\eta_{\mathrm{tile}}$ and the resolution $\varepsilon$}. Hence $T^*_\perp$ is finite and $O(1)$, uniformly in $\eta_{\mathrm{tile}}$ and $\varepsilon$. On this phase, Gr\"onwall gives
\[
\|\varphi(t, x_0) - \hat{\varphi}(t, x_0)\| \;\leq\; \|f - \hat{f}\|_{C^0} \cdot \frac{e^{Lt} - 1}{L}, \qquad t \in [0, T^*_\perp],
\]
with $L = \Lip(f)$.

\textit{Asymptotic phase $[T^*_\perp, \infty)$.}
After $T^*_\perp$ the trajectory lies in the Fenichel neighborhood, where the normal contraction acts at the rate $\lambda_\perp$ (the normal-hyperbolicity rate, \emph{not} the $O(\eta_{\mathrm{tile}})$ in-manifold relaxation rate of an individual tile).
By the variation-of-constants formula~\citep{vanhandel2007filtering},
\[
\|\varphi(t, x_0) - \hat{\varphi}(t, x_0)\|
\;\leq\; C\|\varphi(T^*_\perp) - \hat{\varphi}(T^*_\perp)\| \cdot e^{-\lambda_\perp(t - T^*_\perp)} + \int_{T^*_\perp}^t C\, e^{-\lambda_\perp(t-s)} \|f - \hat{f}\|_{C^0}\, ds,
\]
where $C$ bounds the flow Jacobian in the linearization regime.
The second term is uniformly bounded in $t$ by $C \|f - \hat{f}\|_{C^0}/\lambda_\perp$, so
\[
\sup_{t \geq T^*_\perp} \|\varphi(t, x_0) - \hat{\varphi}(t, x_0)\| \;\leq\; C\|f - \hat{f}\|_{C^0} \cdot \frac{e^{L T^*_\perp} - 1}{L} + \frac{C \|f - \hat{f}\|_{C^0}}{\lambda_\perp}.
\]
Since $T^*_\perp$ and $\lambda_\perp$ are fixed $O(1)$ constants independent of $\eta_{\mathrm{tile}}$ and $\varepsilon$, and $\|f - \hat{f}\|_{C^0} \leq \eta_{\mathrm{model}} + \eta_{\mathrm{base}}$, this transverse contribution satisfies
\[
\sup_{t \geq 0} \|\varphi(t, x_0) - \hat{\varphi}(t, x_0)\|_{\perp}
\;\leq\; C(\eta_{\mathrm{model}} + \eta_{\mathrm{base}}) \cdot \left( \frac{e^{L T^*_\perp} - 1}{L} + \frac{1}{\lambda_\perp} \right) \;<\; \frac{\varepsilon}{3},
\]
attainable at every resolution $\varepsilon$ once $\eta_{\mathrm{model}} + \eta_{\mathrm{base}}$ is small enough, since the bracketed factor is now a fixed constant rather than a quantity that grows (as $e^{L/\eta_{\mathrm{tile}}}$) as the tiling is refined. Combining this transverse bound with the Manifold and Longitudinal components ($\varepsilon/3$ each) by the triangle inequality yields $\sup_{t \geq 0} \|\varphi - \hat{\varphi}\| < \varepsilon$.
\end{enumerate}\textbf{Conclusion.}
By choosing $\eta_{\mathrm{model}}$ and $\eta_{\mathrm{base}}$ sufficiently small, the total uniform error is bounded by $\varepsilon$.
The basin mismatch volume is bounded by $\delta$ due to the structural stability of the domain boundaries (separatrices) of the hyperbolic proxy $f^\dagger$.
\end{proof}

\begin{proof}[Proof of Case 2: Isochronous Limit Cycle Manifold]
Let $\mathcal{M}$ be a Normally Hyperbolic Invariant Manifold (NHIM) of $f$ foliated by a continuum of periodic orbits $\{\gamma_\theta\}_{\theta \in S^1}$, all having a common period $T$.

\textbf{Step 1: Target Selection and Constructive Discretization.} 
The continuum of orbits in $f$ is structurally unstable and cannot be approximated robustly directly. 
While the Kupka-Smale theorem guarantees that periodic hyperbolicity is dense, we must explicitly construct a perturbation to ensure the resulting isolated limit cycles form a dense $\epsilon$-net without destroying the global basin structure. 
Therefore, we explicitly construct a structurally stable proxy field $f^{\dagger}$. 

For any compact normally hyperbolic manifold $\mcM$ foliated by isochronous limit cycles, we introduce a weak transverse drift to break the continuum into discrete orbits. 
Let $V: \mcX \to \mathbb{R}$ be a smooth function that oscillates transversally to the limit cycles such that its local transverse minima form an $\epsilon/16$-net of $\mcM$. 
We construct the proxy field by applying a global perturbation: $f^{\dagger}(x) = f(x) - \eta_{\mathrm{tile}} \nabla V(x)$. 
Generating a dense $\epsilon/16$-net requires $V$ to have high-frequency spatial oscillations, scaling the $C^1$ norm of $\nabla V$ proportionally to $1/\epsilon^2$. 
Because this spatial configuration is fixed by $\epsilon$ independently of the amplitude $\eta_{\mathrm{tile}}$, we can choose $\eta_{\mathrm{tile}} > 0$ arbitrarily small. 
This ensures the global perturbation remains strictly bounded by $\|\eta_{\mathrm{tile}} \nabla V\|_{C^1} < \eta_{model}$, preserving the macroscopic transient approach and global separatrices.

Thus, for any $\eta_{model}>0$ and $\epsilon>0$, we construct $f^{\dagger}$ such that:
\begin{enumerate}
    \item \textbf{Closeness:} $\|f - f^{\dagger}\|_{C^1} < \eta_{model}$.
    \item \textbf{Persistence:} The manifold $\mcM$ persists as $\mcM^{\dagger}$.
    \item \textbf{Discretization:} The flow on $\mcM^{\dagger}$ possesses a finite number of hyperbolic limit cycles $\{\gamma_i^{\dagger}\}_{i=1}^N$ forming an $\epsilon/16$-net of $\mcM$. The remaining $\epsilon/16$ in our spatial budget safely accounts for the structural geometric shift of these cycles when the final approximation $\tilde{f} \in \hat{\mcF}$ targets $f^{\dagger}$.
\end{enumerate}

\textbf{Step 2: Robust Base Approximation.}
We treat the stable proxy $f^\dagger$ as our target.
By the $C^1$ UAP, there exists $\tilde{f} \in \hat{\mathcal{F}}$ such that $\|\tilde{f} - f^\dagger\|_{C^1} < \eta_{\mathrm{base}}$.
Since the limit cycles $\{\gamma^\dagger_i\}$ of $f^\dagger$ are hyperbolic, they are structurally stable. By Lemma~\ref{lem:lc_stability}, for sufficiently small $\eta_{\mathrm{base}}$, $\tilde{f}$ is guaranteed to possess exactly $N$ limit cycles $\{\tilde{\gamma}_i\}$ close to the proxy cycles.

\textbf{Step 3: Additive Period Correction.}
We apply the additive correction strategy (Theorem~\ref{thm:uap_lc}) to $\tilde{f}$.
Since the original system $f$ is isochronous, we define the target period for all $N$ discrete cycles to be exactly $T$.
\[ \hat{f} = \tilde{f} + \sum_{i=1}^N \alpha_i^* \hat{\mathbf{\Phi}}_i. \]
This creates a phase-locked skeleton on the approximated manifold $\hat{\mathcal{M}}$, where every cycle has period $T$.

\textbf{Step 4: Global Error Decomposition.}
Define the \textbf{good set}
\[
G \coloneqq \mcX \setminus \bigl( S \cup N_{\rho_0}(\partial \mcB) \bigr),
\]
where $S$ is the union of separatrices of $\hat{f}$, $\partial \mcB$ denotes the basin boundaries between adjacent tiles $\hat{\gamma}_i, \hat{\gamma}_j$, and $N_{\rho_0}$ is the $\rho_0$-tubular neighborhood from Lemma~\ref{lem:phase_coherence}.
Since separatrices have codimension $\geq 1$, $\mu(\mcX \setminus G) \to 0$ as $\rho_0 \to 0$, and we choose $\rho_0$ so that $\mu(\mcX \setminus G) < \delta$.

For $x_0 \in G$, let $\theta = \theta(x_0) \in \Theta$ be the foliation parameter of $x_0$ in the target and $i = i(x_0)$ the basin index in the approximator.
The trajectory error decomposes as
\begin{align*}
\| \varphi(t, x_0) - \hat{\varphi}(t, x_0) \|
&\leq \underbrace{\bigl\| \varphi(t, x_0) - \gamma_\theta(t + \Psi^f(x_0)) \bigr\|}_{\text{(A) target attraction}}
  + \underbrace{\bigl\| \gamma_\theta(t + \Psi^f(x_0)) - \hat{\gamma}_i(t + \Psi^{\hat{f}}_i(x_0)) \bigr\|}_{\text{(B) cycle--cycle}} \\
&\quad + \underbrace{\bigl\| \hat{\gamma}_i(t + \Psi^{\hat{f}}_i(x_0)) - \hat{\varphi}(t, x_0) \bigr\|}_{\text{(C) approximator attraction}}.
\end{align*}
We allocate $\varepsilon/4$ to each of four contributions: the attraction terms (A) and (C) jointly; the geometric part of (B); the phase part of (B); and the transverse manifold error.

\textbf{(A) and (C): Exponential attraction to respective cycles.}
By normal hyperbolicity of $\mcM$ for $f$ and hyperbolicity of $\hat{\gamma}_i$ for $\hat{f}$, both attraction errors decay exponentially with rates $\lambda_f, \lambda_{\hat{f}} > 0$:
$\text{(A)} \leq K_f e^{-\lambda_f t}$ and $\text{(C)} \leq K_{\hat{f}} e^{-\lambda_{\hat{f}} t}$.
For $t \geq T_{\mathrm{trans}} \coloneqq \max(K_f, K_{\hat{f}}) / \min(\lambda_f, \lambda_{\hat{f}}) \cdot \log(8/\varepsilon)$, both terms are $\leq \varepsilon/16$.
For $t < T_{\mathrm{trans}}$, the standard Gr\"onwall bound yields $\|\varphi - \hat{\varphi}\| \leq (e^{L T_{\mathrm{trans}}} - 1) \cdot \|f - \hat{f}\|_{C^0} / L$, which can be made $\leq \varepsilon/4$ by choosing $\eta_{\mathrm{base}} + \eta_{\mathrm{model}} + \|\boldsymbol{\alpha}^*\| B_\Phi$ small.

\textbf{(B-geom) Geometric tile-to-target distance.}
By Lemma~\ref{lem:bump_realizability}, Lemma~\ref{lem:geo_stability}, and the $\epsilon_{\mathrm{tile}}$-net property,
\[
d_H(\gamma_\theta, \hat{\gamma}_i)
\;\leq\; \underbrace{d_H(\gamma_\theta, \gamma_{\theta_i^*})}_{\leq \,\epsilon_{\mathrm{tile}}}
+ \underbrace{d_H(\gamma_{\theta_i^*}, \gamma_i^\dagger)}_{\leq\, L_F \eta_{\mathrm{model}}}
+ \underbrace{d_H(\gamma_i^\dagger, \tilde{\gamma}_i)}_{\leq\, L_H \eta_{\mathrm{base}}}
+ \underbrace{d_H(\tilde{\gamma}_i, \hat{\gamma}_i)}_{\leq\, L_H \|\boldsymbol{\alpha}^*\| B_\Phi}.
\]
Choosing the budget so the right-hand side is $\leq \varepsilon/4$ controls the geometric part of (B).

\textbf{(B-phase) Phase mismatch.}
Equal periods alone do not imply that $\gamma_\theta(t + \Psi^f(x_0))$ and $\hat{\gamma}_i(t + \Psi^{\hat{f}}_i(x_0))$ are at corresponding positions on their respective cycles.
The phase difference $\Delta\Psi(x_0) \coloneqq \Psi^f(x_0) - \Psi^{\hat{f}}_i(x_0)$ contributes
\[
\bigl\| \gamma_\theta(t + \Psi^f(x_0)) - \hat{\gamma}_i(t + \Psi^{\hat{f}}_i(x_0)) \bigr\|
\;\leq\; d_H(\gamma_\theta, \hat{\gamma}_i) + L_\gamma \cdot |\Delta\Psi(x_0)|,
\]
where $L_\gamma = \sup \|\dot{\gamma}\|$ on the cycles.
By Lemma~\ref{lem:phase_coherence}, on the good set $G$,
\[
|\Delta\Psi(x_0)|
\;\leq\; C_\Psi \bigl( \|\hat{f} - f\|_{C^1} + \epsilon_{\mathrm{tile}} \bigr)
\;\leq\; C_\Psi \bigl( \eta_{\mathrm{model}} + \eta_{\mathrm{base}} + \|\boldsymbol{\alpha}^*\| B_\Phi + \epsilon_{\mathrm{tile}} \bigr).
\]
Choose the budget so $L_\gamma C_\Psi \cdot (\cdots) \leq \varepsilon/4$.

\textbf{Transverse manifold error.}
By Fenichel persistence (Theorem~\ref{thm:fenichel}), $d_H(\mcM, \hat{\mcM}) \leq L_{\mathrm{NHIM}} \cdot \|\hat{f} - f\|_{C^1} \leq \varepsilon/4$ for sufficiently small $\eta_{\mathrm{model}} + \eta_{\mathrm{base}}$.

\textbf{Total.}
Summing the four budgets, $\sup_{t \geq 0} \|\varphi(t, x_0) - \hat{\varphi}(t, x_0)\| < \varepsilon$ for every $x_0 \in G$, with $\mu(\mcX \setminus G) < \delta$.
This is precisely $\|\varphi - \hat{\varphi}\|_\varepsilon < \delta$.
\end{proof}

\begin{remark}[Consistency with Constraint~C of Theorem~\ref{thm:uap_lc}]
The phase-coherence term in (B-phase) and the Gr\"onwall amplification in (A)+(C) above are precisely the two contributions formalized in Constraint~C of the additive-correction proof of Theorem~\ref{thm:uap_lc} (Eq.~\ref{eq:constraint_C}).
The bound~\eqref{eq:constraint_C} on $\eta_{\mathrm{base}}$ derived there propagates directly into the budget cascade for Case~2, so no separate budget needs to be tracked here: a choice of $\eta_{\mathrm{base}}, \eta_{\mathrm{model}}, \epsilon_{\mathrm{tile}}$ satisfying~\eqref{eq:constraint_C} (with $\rho_0$ chosen small enough that $\mu(\mcX \setminus G) < \delta$) closes both proofs simultaneously.
\end{remark}

%==============================================================================
\section{Numerical Experiments}\label{app:experiments}

The following experiments illustrate the three theorems and the B/P/D taxonomy of Section~\ref{sec:intro} on trained Neural ODEs and on real task-trained recurrent networks. They demonstrate the mechanisms used in the proofs; they are not part of them. Unless noted otherwise, we fit a Neural ODE $\dot x = \hat f_\theta(x)$ with a smooth-activation MLP field (hence $C^1$) by regressing the vector field in $C^1$, minimizing $\mathrm{MSE}(\hat f, f) + \mathrm{MSE}(\nabla \hat f, \nabla f)$, integrate both flows with a fixed-step solver, and report the empirical $\varepsilon$-$\delta$ profile
\begin{equation}\label{eq:emp_eps_delta}
\hat\delta(\varepsilon) \;=\; \frac{1}{M}\,\#\Big\{\, x_0^{(i)} \;:\; \sup_{t}\, \big\|\varphi(t, x_0^{(i)}) - \hat\varphi(t, x_0^{(i)})\big\| > \varepsilon \,\Big\},
\end{equation}
the sample form of Definition~\ref{def:eps_delta_close} over $M$ sampled initial conditions. All runs use double precision.

\subsection{The three failure modes on planar targets}\label{app:exp_bpd}

We fit one Neural ODE per error mode (Figure~\ref{fig:exp_bpd}).

\textbf{B-type (basin): damped Duffing.} For $\dot x = y,\ \dot y = x - x^3 - \tfrac{1}{2} y$ (two stable spirals at $(\pm 1, 0)$, a saddle at the origin, a curved separatrix), a width-$64$ field with $\|f - \hat f\|_{C^0} = 9.4 \times 10^{-3}$ reproduces both basins with $99.96\%$ agreement ($3$ of $8100$ initial conditions switch basin). The trajectory sup-error is of order $10^{-3}$ throughout both basins and rises to the basin diameter $D \approx 1.8$ only on a thin collar around the separatrix, precisely the localization of Lemma~\ref{lem:basin_error} and Remark~\ref{rem:basin_scaling}. At the scale of the wells ($|x| = 1$), $\hat\delta(0.05) = 0.014$, $\hat\delta(0.1) = 0.007$, $\hat\delta(0.2) = 0.004$, falling to the pure basin-flip fraction $3/8100 = 4 \times 10^{-4}$ as $\varepsilon$ approaches the basin diameter.

\textbf{P-type (phase): Van der Pol.} For $\dot x = y,\ \dot y = \mu(1 - x^2) y - x$ with $\mu = 1$~\citep{vanderpol1926relaxation}, the learned cycle matches the true one in shape, but a residual period error $|T - \hat T| = 1.25 \times 10^{-3}$ drives a phase-drift beat: the running sup-error grows from $0.038$ at $t = 5$ to the full cycle diameter $5.66$ near $t \approx 6.4 \times 10^{3}$. The detection horizon predicted by the $1/|T - \hat T|$ law, $t^\ast = 2657$, matches the empirical worst-case time $2641$. A $45$-cycle validation window ($T = 300$) detects essentially nothing, so finite-horizon validation is blind to P-type error; this is the gap that exact period matching (Theorem~\ref{thm:uap_lc}) removes (Figure~\ref{fig:exp_vdp}).

\begin{figure}[t]
\centering
\includegraphics[width=\textwidth]{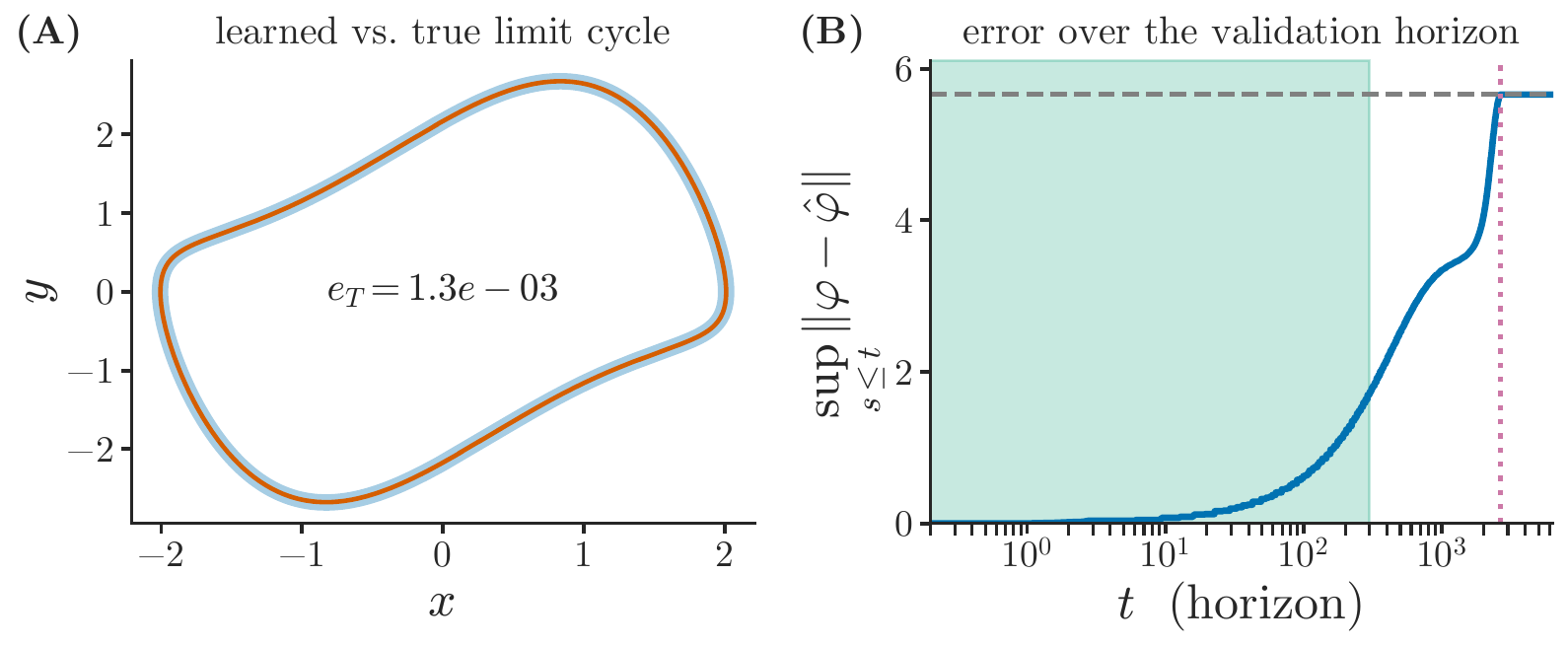}
\caption{P-type on the Van der Pol oscillator. Left: the learned limit cycle (dashed) coincides in shape with the true one (solid band) almost exactly; the period error is $|T - \hat T| = 1.25 \times 10^{-3}$. Right: the running supremum of the trajectory error over a horizon of length $t$ climbs to the cycle diameter $D$ (dashed) only near the detection horizon $t^\ast \approx 2657$ (dotted); a $45$-cycle validation window (shaded, $t \le 300$) captures only a fraction of it, so finite-horizon validation underestimates the drift.}
\label{fig:exp_vdp}
\end{figure}

\textbf{D-type (discretization): ring attractor.} For $\dot z = (1 - \|z\|^2) z$ (the unit circle is a continuum of equilibria), the ring persists (normal contraction $\approx -2 \times 10^{-4}$, so the flow is $\varepsilon$-$\delta$ close), but the continuum is discretized into $10$ isolated fixed points ($5$ sinks alternating with $5$ saddles), and the non-wandering set is never homeomorphic to $S^1$. The time-averaged $L^2$ error stays small ($8 \times 10^{-2}$) despite the changed topology, which is the tiling regime of Theorem~\ref{thm:uap_ca} (Figure~\ref{fig:exp_ca}A,B, top row). Figure~\ref{fig:exp_bpd} in the main text shows the B-type fit and its $\delta(\varepsilon)$ curve.

\textbf{Decomposition: two coaxial limit cycles.} To separate the phase mode from the basin mode, we fit a Neural ODE to a system with two stable limit cycles (radii $1$ and $2$) separated by an unstable cycle at $r = 1.5$ (Figure~\ref{fig:exp_2lc}). Without period correction, phase drift fills both basins (P-type), and $\delta(\varepsilon) \approx 1$ up to $\varepsilon \approx 0.5$. The exact period correction of Theorem~\ref{thm:uap_lc} collapses the bad set to a thin B-type collar at the unstable cycle $r = 1.5$, and $\delta(\varepsilon)$ drops to $0.026$, $0.015$, $0.004$ at $\varepsilon = 0.05, 0.10, 0.30$: phase drift accounts for $\approx 99\%$ of the $\varepsilon$-exceeding initial conditions, the basin collar the rest.

\begin{figure}[t]
\centering
\includegraphics[width=0.92\textwidth]{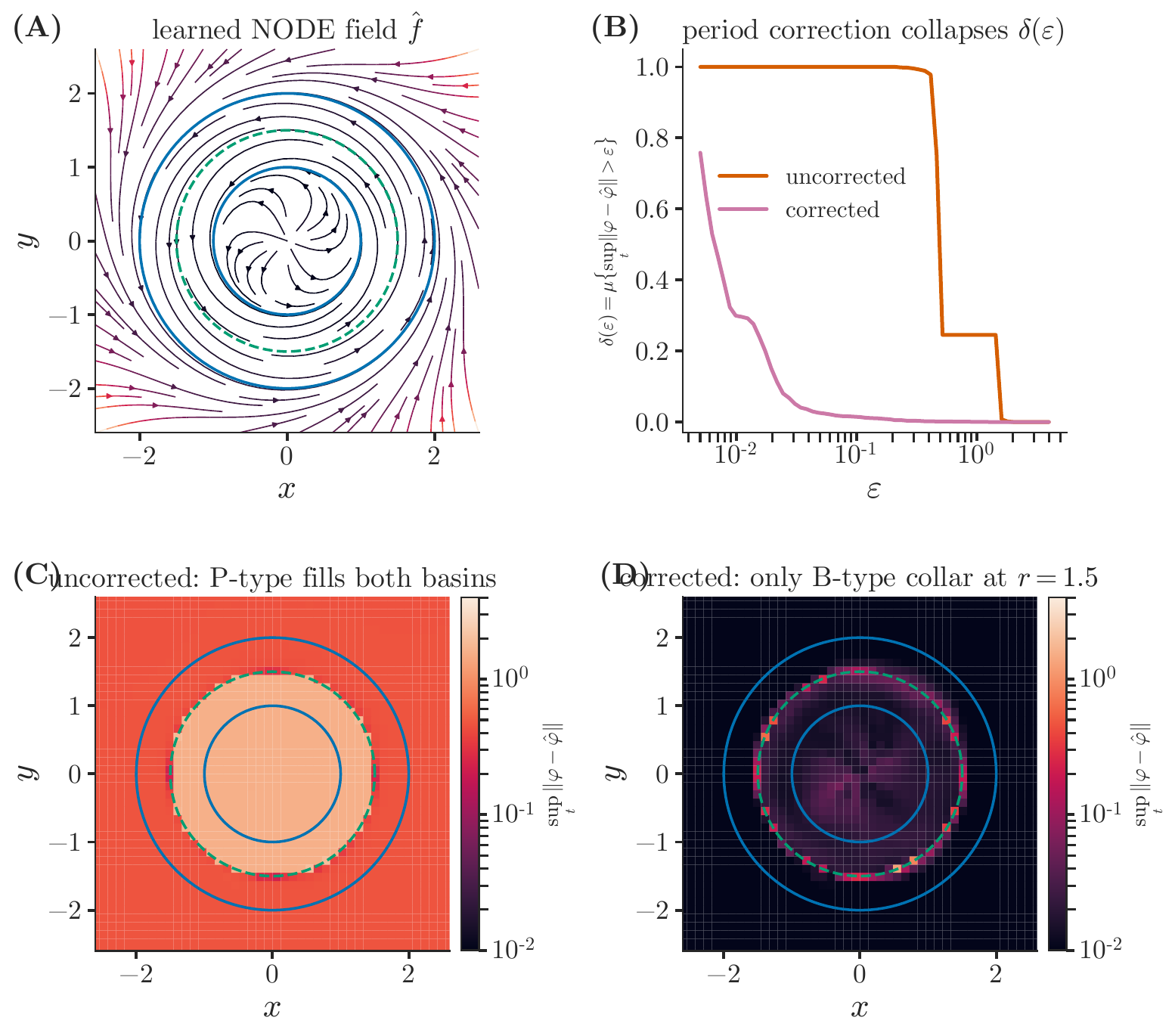}
\caption{Isolating the phase mode with two coaxial limit cycles (stable at $r = 1, 2$; unstable at $r = 1.5$).
\textbf{(A)} The learned Neural ODE field recovers both cycles.
\textbf{(B)} The exact period correction collapses $\delta(\varepsilon)$ from $\approx 1$ (uncorrected, phase-dominated) to the basin floor (corrected).
\textbf{(C)} Uncorrected, phase drift (P-type) fills both basins.
\textbf{(D)} Corrected, only a thin B-type collar remains at the unstable cycle $r = 1.5$.}
\label{fig:exp_2lc}
\end{figure}

\subsection{$\mathcal{F}_{\mathrm{FP}}$ holds across $C^1$-dense architectures}\label{app:exp_fp}

Theorem~\ref{thm:uap_fp} assumes only the $C^1$ universal approximation property (Definition~\ref{def:hypothesis_class}), and no specific architecture. We verify this on a three-dimensional target with $2^3 = 8$ hyperbolic fixed points, $\dot x_i = x_i - x_i^3$ for $i = 1, 2, 3$, whose attractors are the corners of the cube (Figure~\ref{fig:exp_fp}). Three $C^1$-dense hypothesis classes, a smooth-activation MLP, a $C^1$ RNN field $\dot x = -x + W \tanh(V x + b)$, and a radial-basis-function field, each recover all eight attractors with basin agreement above $99.8\%$ and $\hat\delta(0.3) \le 6 \times 10^{-3}$. The guarantee follows from $C^1$ density and holds for every such parameterization, as Theorem~\ref{thm:uap_fp} states.

\begin{figure}[t]
\centering
\includegraphics[width=\textwidth]{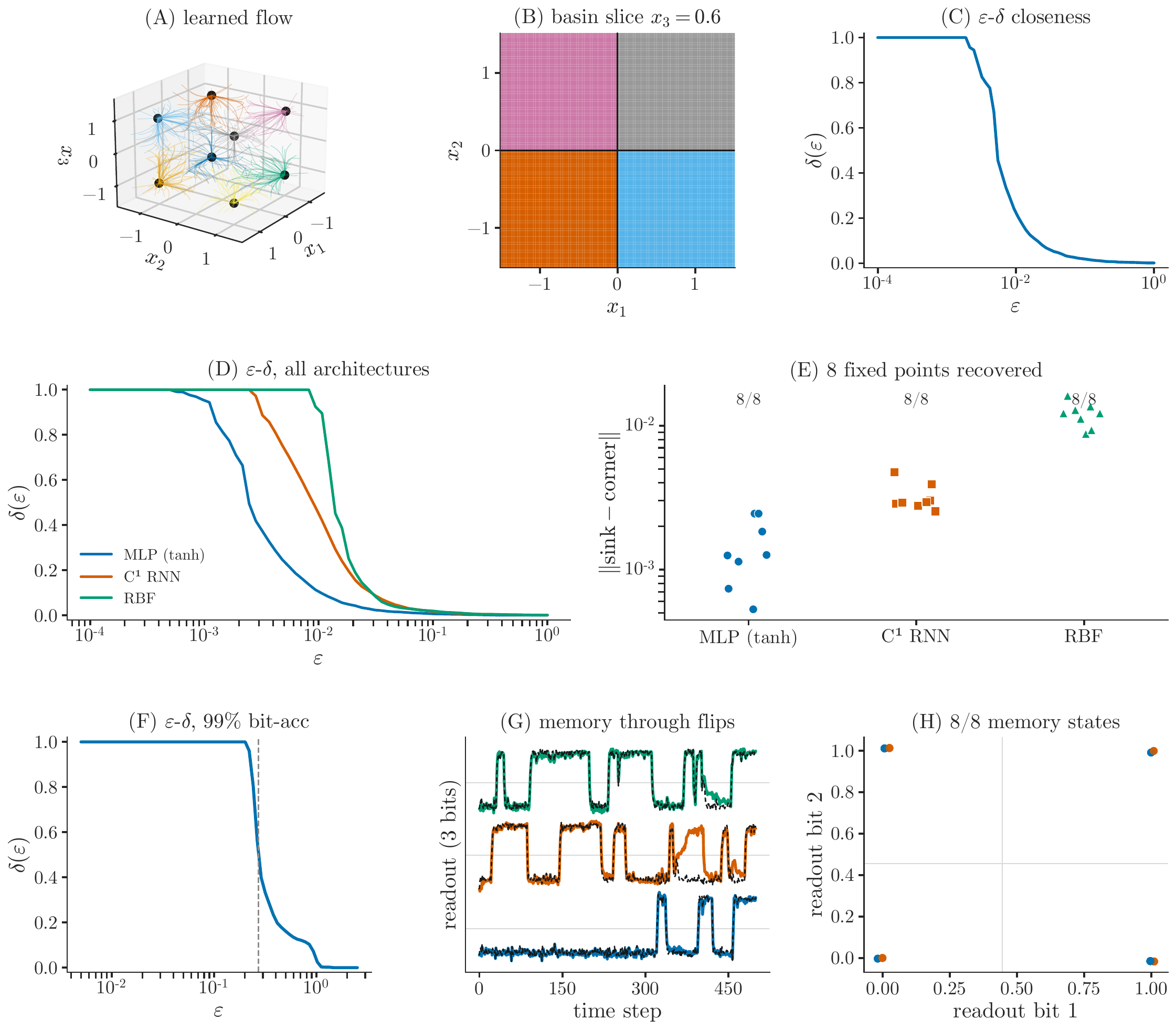}
\caption{$\mathcal{F}_{\mathrm{FP}}$ across architectures and on a real trained network.
\textbf{(A--C)} A $3$D eight-fixed-point target $\dot x_i = x_i - x_i^3$: trajectories from many initial conditions settle onto the eight cube-corner attractors, colored by basin (A); a basin slice (B); and the $\varepsilon$-$\delta$ curve (C).
\textbf{(D, E)} Three $C^1$-dense hypothesis classes, a smooth-activation MLP, a $C^1$ RNN field, and a radial-basis-function field, each reach $\varepsilon$-$\delta$ closeness (D) and recover all $8$ fixed points (E), with $\hat\delta(0.3) \le 6 \times 10^{-3}$.
\textbf{(F--H)} A Neural ODE $\varepsilon$-$\delta$ approximates a gated recurrent network trained on the $3$-bit flip-flop task, scored on the network's own readout: the $\varepsilon$-$\delta$ curve at $99.2\%$ per-bit accuracy (F); the three bits tracked through many memory flips, network (solid) versus Neural ODE (dashed) (G); and the Neural ODE's autonomous field, with stable fixed points at all $8$ memory states (H).}
\label{fig:exp_fp}
\end{figure}

\subsection{Real task-trained networks lie in the target classes}\label{app:exp_real}

The theorems concern targets in $\mathcal{F}_{\mathrm{FP}}$, $\mathcal{F}_{\mathrm{LC}}$, and $\mathcal{F}_{\mathrm{CA}}$. We now show that networks trained on standard tasks fall into these classes and are $\varepsilon$-$\delta$ approximable.

\textbf{Working memory as $\mathcal{F}_{\mathrm{FP}}$: the $3$-bit flip-flop.} We take a gated recurrent network trained on the $3$-bit flip-flop task~\citep{sussillo2013blackbox} from the Computation-through-Dynamics toolkit~\citep{versteeg2025computation, sedler2023expressive} (latent dimension $64$). Its linear readout reconstructs the task output with $R^2 = 0.9999$. FixedPointFinder~\citep{golub2018fixedpointfinder} with zero input recovers stable fixed points at all $8$ memory states (the cube corners), so the trained network is a member of $\mathcal{F}_{\mathrm{FP}}$, a real instance of the multistable target class (Figure~\ref{fig:exp_bpd}C, main text; the linear readout has $R^2 = 0.9999$).

\textbf{A Neural ODE approximates the trained network.} Writing $F$ for the network's gated recurrent cell, the discrete update $h_{t+1} = F(u_t, h_t)$ is exactly forward Euler (step $\Delta t = 1$) of the ODE $\dot x = g^\ast(x, u)$ with $g^\ast(x, u) = F(u, x) - x$; the trained network is thus the $\Delta t = 1$ Euler discretization of that ODE, not the ODE itself. Setting $u = 0$ gives the autonomous field, whose eight hyperbolic fixed points are the memory states, and this is the object $\mathcal{F}_{\mathrm{FP}}$ addresses. We fit a separate MLP field $\hat f_\theta \approx g^\ast$ (a different architecture, carrying its own nonzero approximation error) to the residual field over a tube around the trajectories, fine-tuned through the rollout, and ask whether it $\varepsilon$-$\delta$ approximates the network. Autonomously ($u = 0$), the approximation preserves the whole fixed-point structure over the infinite horizon: rolling out the network and the Neural ODE from the same states, they settle on the same memory state for $99.8\%$ of initial conditions with $\hat\delta(0.3) = 6 \times 10^{-3}$, unchanged out to $5000$ steps, and all eight memory states are inherited (Figure~\ref{fig:exp_fp}H). Under the task inputs the Neural ODE tracks the network's own (imperfect) readout at $99.2\%$ per-bit accuracy (Figure~\ref{fig:exp_fp}F,G); there the trajectory sup-error is larger ($\hat\delta(0.3) = 0.32$, median $0.25$) because a bit flip is a nearly discontinuous one-step transition, so a continuous flow has a brief transient mismatch there. This is the discrete-to-continuous analogue of the fast-transient region, where $\varepsilon$-$\delta$ closeness holds even though $C^1$ closeness does not.

\textbf{Continuous attractors as $\mathcal{F}_{\mathrm{CA}}$: head-direction integration.} A $64$-unit rectified-tanh recurrent network trained to integrate angular velocity on $S^1$~\citep{Sagodi2024a} holds heading on a ring attractor, the canonical continuous-attractor model of the head-direction circuit, whose ring topology has been recovered from neural population recordings~\citep{chaudhuri2019attractor}. A finite network cannot hold a true continuum; it instead realizes a structurally stable approximation of it~\citep{Sagodi2024a}, a normally attracting ring carrying isolated hyperbolic fixed points. These are the zeros of the along-ring drift (the autonomous field projected onto the ring tangent), which as the sign changes of a scalar on $S^1$ alternate sinks and saddles in equal number; the network we use tiles the ring into $12$ sinks alternating with $12$ saddles, all clearly hyperbolic (median leading eigenvalue $-0.20$), and a dense rollout of held headings converges onto exactly these discrete sinks. Because the fixed points are hyperbolic they are structurally stable, so any $C^1$ field sufficiently close to the network's must inherit them. We verify this by fitting an independent Neural ODE $\hat g(x) = -x + \mathrm{MLP}(x)$ (a continuous-time recurrent form, whose $-x$ leak guarantees global stability) to the network's autonomous field in a neighbourhood of the ring: its fixed points coincide with the network's ($22$ of the $24$ within $0.15$~rad; Figure~\ref{fig:exp_ca}C), and its held-heading drift $\delta(\varepsilon)$ matches the network's (Figure~\ref{fig:exp_ca}D). This is exactly the setting of Theorem~\ref{thm:uap_ca}: a finite model discretizes the continuum into isolated headings, the D-type mechanism of head-direction drift, and the discrete fixed-point structure transfers to the approximating flow.

\begin{figure}[t]
\centering
\includegraphics[width=\textwidth]{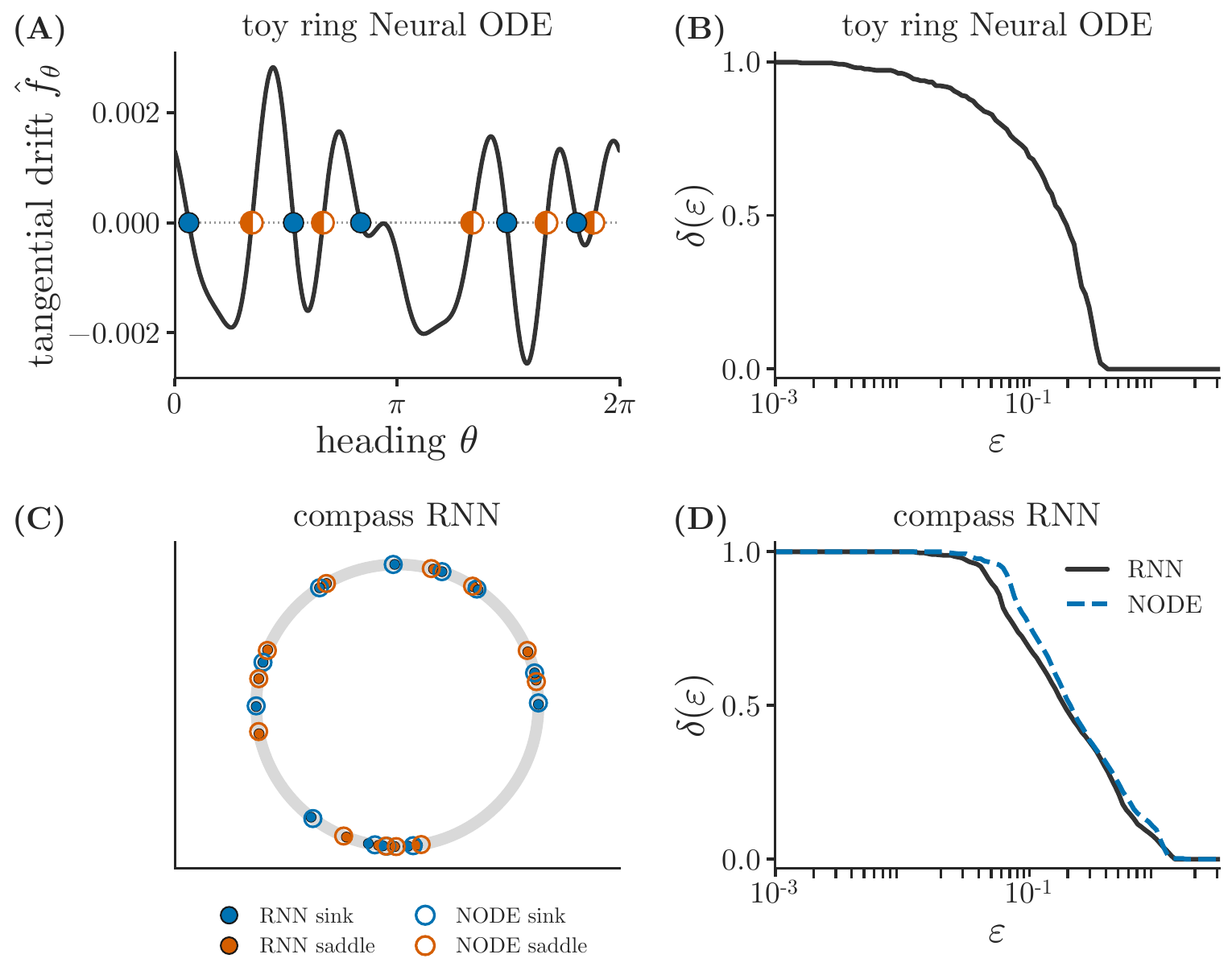}
\caption{Continuous attractors are discretized (D-type), and the discretization transfers to a Neural ODE. Top: a Neural ODE fit to the analytic ring attractor $\dot z = (1-\|z\|^2)z$. Bottom: a $64$-unit rectified-tanh network trained to integrate angular velocity~\citep{Sagodi2024a}, and an independent Neural ODE fitted to it.
\textbf{(A)} The toy ring's along-ring tangential drift; its zeros are isolated fixed points (filled circle: sink; half-filled: saddle), so the continuum is tiled by discrete equilibria, $5$ sinks alternating with $5$ saddles.
\textbf{(B)} $\varepsilon$-$\delta$ closeness of the toy Neural ODE to the analytic continuum: $\delta(\varepsilon)$ is the fraction of held headings whose trajectory error exceeds $\varepsilon$.
\textbf{(C)} Fixed points of the trained network on the ring, in canonical (readout) coordinates: $12$ sinks (blue) alternating with $12$ saddles (orange), all hyperbolic. Filled markers are the network's fixed points, open markers those of the fitted Neural ODE; they coincide ($22$ of $24$), the structurally-stable transfer of the discrete fixed-point set.
\textbf{(D)} Held-heading drift $\delta(\varepsilon)$ for the network (solid) and the Neural ODE (dashed): both discretize the continuum, and the Neural ODE reproduces the network's drift.}
\label{fig:exp_ca}
\end{figure}

\subsection{Budget and width scaling}\label{app:exp_budget}

Finally, we check the budget cascade of the proofs quantitatively. On a one-parameter bistable family we fit the field accuracy $\eta_0$ required to reach a target $(\varepsilon, \delta)$ to a power law $\eta_0 \propto \varepsilon^a \delta^b$. Across saddle rates spanning $L / \mu_u \in [1, 18]$ we measure $a = 0.94$ and $b = 1.02$ on average, so $\eta_0 = \Theta(\varepsilon \delta)$: the required vector-field accuracy is the symmetric product of the tolerance and the reliability, consistent with the linear basin-error scaling of Remark~\ref{rem:basin_scaling}. For the planar targets the trajectory error is linear in the field error through the origin (slope $1.18$ for Duffing), matching $\eta_{\mathrm{traj}} = \lambda \varepsilon / K$ of Lemma~\ref{lem:traj_bound}, and the approximation-limited field error scales as $\eta(W) \sim C W^{-p}$ with $p \in [1.1, 1.35]$, giving a width requirement $W \sim (\varepsilon \delta)^{-1/p}$ for these finite-width fields. These are composed sufficiency bounds and involve the problem constants $L, \lambda, \mu_u$, so they bound the required width from above; a matching lower bound is left open.

%==============================================================================
\section{Extended Literature Review}\label{app:literature}

This appendix provides a comprehensive review of universal approximation results for dynamical systems, establishing the context and novelty of our contributions.

\subsection{Approximation on Finite Time Intervals}

Universal approximation theorems for dynamical systems primarily address two regimes: finite-time simulation via high-dimensional embedding and direct trajectory approximation in the native state space.

\paragraph{Foundational results.}
Seminal works by~\citet{li1992approximation},~\citet{funahashi1993approximation} and ~\citet{doya1993universality} established that RNNs approximate general dynamical systems by embedding dynamics into high-dimensional hidden states.
This lineage was extended to the approximation of continuous functionals and nonlinear operators by~\citet{chen1993uap} and~\citet{chen1995uap}, broadening the scope of universality to map input signals to outputs.
This lineage of results extends back to homogeneous state-affine systems, which were shown to possess similar universality properties in the discrete-time setting for compact time intervals~\citep{fliess1980vers, sontag1979realization, sontag1979polynomial}.
Crucially, these guarantees are \emph{strictly limited to bounded time intervals}, leaving long-term asymptotic behavior unconstrained.

\paragraph{Control-theoretic universality.}
A complementary route to universality goes through geometric control theory: viewing a deep residual network or Neural ODE as a control system, its flow can be steered to approximate an arbitrary target map precisely when the constituent vector fields are bracket-generating, which is the nonlinear controllability criterion of~\citet{hermann1977nonlinear}.
This yields universal approximation and interpolation guarantees for residual networks and Neural ODEs~\citep{tabuada2022universal, cuchiero2020deep, ruiz2023neural, agrachev2022control}, and it is what underwrites the $C^1$ universal approximation property we assume of the hypothesis class (Definition~\ref{def:hypothesis_class}).
As with the embedding results above, however, these guarantees are finite-time: they control the flow map at a terminal time (for classification, interpolation, or transport), not the asymptotic behavior as $t \to \infty$ that our $\varepsilon$-$\delta$ conditions constrain.
The dual notion of observability~\citep{hermann1977nonlinear, sussmann1978single} governs whether such a vector field can in turn be identified from partial observations, a prerequisite for state-space interpretability that is complementary to, and outside the scope of, our approximation results.

\paragraph{Gr\"onwall-based bounds.}
Most finite-time results rely on Gr\"onwall's inequality to control trajectory differences, incurring exponential degradation of accuracy over time~\citep{sontag1992neural, funahashi1993approximation, chow2000modeling, li2005approximation}.
For time horizon $T$, if $\|f - \hat{f}\|_{C^0} < \eta$, then $\|\varphi(t) - \hat{\varphi}(t)\| \leq \eta(e^{Lt} - 1)/L$, which grows unboundedly as $t \to \infty$.

\paragraph{Discrete-time sequences.}
RNNs act as universal approximators for various temporal mappings, though often constrained to finite-time horizons. 
They can uniformly approximate the dynamics of continuous finite-memory systems on compact domains \citep{hammer2000approximation} and state-space trajectories over closed discrete-time intervals \citep{jin1995universal, patan2008approximation}. 
More recent work has explicitly established the finite-time universality of discrete-time RNNs \citep{aguiar2023universal} and linear recurrences with nonlinear projections \citep{orvieto2023universality}, alongside their ability to map continuous past-dependent sequence-to-sequence functions \citep{song2023minimal}.
Furthermore, CNNs have emerged as effective alternatives for modeling sequences with finite memory \citep{bai2018empirical, jiang2021approximation}.

\paragraph{Continuous-time trajectories.}
The universal approximation properties of static neural networks \citep{funahashi1989approximate} initially established them as powerful tools for modeling nonlinear systems \citep{narendra1990identification, warwick1992neural}, often employing multilayer perceptrons or radial basis functions \citep{chen1992adaptive, chen1992neural, choi1996constructive, tan1995efficient, garces2012strategies}. 
Expanding this to dynamic contexts, the capabilities of continuous-time recurrent networks for autonomous systems were demonstrated by \citet{funahashi1993approximation} and extended by \citet{kimura1998learning} and \citet{chow2000modeling}. 
For non-autonomous systems, any trajectory can be approximately realized on finite intervals \citep{nakamura2009approximation}, a property also held by Neural ODEs for input-driven systems \citep{li2022deep, zakwan2023universal}.
Furthermore, even networks with randomly initialized, fixed weights can achieve universal approximation of continuous functions and dynamical systems over compact intervals by learning only the bias parameters \citep{williams2024expressivity}.
Recent Neural ODE variants add frequency-domain~\citep{ashraf2026frequency} and energy-behavior-preserving~\citep{celledoni2026uepi} parameterizations, apply universal differential equations~\citep{rackauckas2020universal} to neural population data~\citep{elgazzar2024universal}, and enable data-driven phase control of learned limit-cycle oscillators~\citep{yawata2025data}.

For non-autonomous systems, where the dynamics explicitly depend on external inputs, it has been shown that the states of the output units of a continuous-time RNN, can approximate the solution of ODEs on compact time intervals \citep{garces2012strategies} and  linear, continuous and regular functionals \citep{li2021approximationencdec}.
A subset of an RNN's units can approximate any smooth ($C^1$) dynamical system (input-affine) with arbitrary precision for finite time \citep{kambhampati2000approximation}. 
Neural oscillators have the universal approximation property on compact time intervals (approximation of causal and continuous operators)  \citep{lanthaler2023neuraloscillators, huang2025upper}.
Any finite trajectory of an $n$-dimensional continuous dynamical system can be approximated by the internal state of the hidden units and $n$ output units of a Liquid time-constant (LTC) network~\citep{hasani2018liquid}, since extended with attractor-based memory for sequence modelling~\citep{kumar2025time}.
Finally, \citet{maass2007computational} extended these results to biological feedback models, proving that neural networks with dynamic synapses are universal approximators for dynamical systems, particularly in closed-loop feedback configurations.

\paragraph{Flow approximation.}
Flow-based approximations for finite time have been established for architectures including Recurrent High-Order Neural Networks~\citep{kosmatopoulos1995structural}.
Any continuous dynamical system can be approximately realized on finite intervals by an RNN~\citep{chow2000modeling}, extended to time-variant systems with fixed initial states~\citep{li2005approximation}.
More recently, \citet{tabuada2020universal} established that Neural ODEs are universal approximators for monotone analytic functions homotopic to the identity, proving that the flow of a single-layer system with time-varying parameters can approximate such mappings over compact sets. 
Despite these advances in representing complex mappings through flows, these results remain confined to finite-time horizons and do not address the long-term topological stability of multistable systems.

\subsection{Fading Memory Systems}\label{app:fading_memory}

For infinite time horizons, existing results predominantly require the \textbf{fading memory property} (FMP).

\begin{definition}[Fading Memory Property~{\citep{boyd1985fading}}]
An operator $H$ mapping input histories to outputs possesses the FMP if it is continuous with respect to a weighted norm that discounts the past: there exists a weighting function $w(z) \to 0$ as $z \to \infty$ such that closeness in the weighted past implies closeness in output.
\end{definition}

\paragraph{Topological constraint.}
The FMP imposes a severe restriction: systems with fading memory ``forget'' initial conditions, implying global asymptotic stability (monostability).
This \emph{inherently excludes multistable systems}---those with multiple fixed points or limit cycles where long-term behavior depends on initial conditions.
A simple bistable system has no fading memory: the final state depends entirely on the basin of attraction.

\paragraph{Reservoir computing.}
Echo State Networks (ESNs) were shown to be universal for fading memory systems by~\citet{jaeger2001echo}, formalized by~\citet{grigoryeva2018echo, grigoryeva2018universal}.
Liquid State Machines achieve universal computation for filters with FMP~\citep{maass2004computational}.
Various reservoir architectures approximate FMP systems: linear systems with polynomial readouts~\citep{boyd1985fading}, state-affine systems~\citep{gonon2020risk}, and Simple Cycle Reservoirs~\citep{li2024simple}.

\paragraph{Other stability conditions.} 
Various results establish universality for systems with decaying memory, including linear RNNs for regular functionals~\citep{li2021curse, li2022approximation}, State Space Models for exponentially decaying memory~\citep{wang2024state}, and fixed reservoirs with linear readouts~\citep{yasumoto2025universality}. 
Extending this to infinite time, \citet{hanson2020universal} and \citet{bishop2022universal, bishop2026recurrent} provide guarantees for deterministic and stochastic RNNs, respectively. However, these results strictly require incremental stability or exponential contraction to bound error propagation.
While effective for filtering, these conditions mathematically enforce a single global attractor, thereby precluding the history-dependent bifurcations and multistability essential for cognitive tasks.

\paragraph{FMP trivializes approximation.}
Every fading-memory system~\citep{sepulchre2021fading} can be uniformly approximated by linear state dynamics with a nonlinear readout~\citep{matthews1993approximating}.
Typically, the FMP is used to reduce the approximation problem to one over a finite, bounded index set, and then appeal to the density of fully connected neural network to obtain approximation  \citep{gonon2021fading}.
This constraint is so restrictive that \citet{boyd1985fading} proved FMP is the necessary and sufficient condition for a system to be uniformly approximated by a \textbf{finite Volterra series}~\citep{volterra1887sopra, franz2006wiener}.
The FMP implies ``asymptotic independence'' of the state from initial conditions~\citep{chua1976qualitative, manjunath2020stability, miller2018stable}, making multistability impossible.
Our work addresses this fundamental gap.

This result was extended to include approximation of systems with an input  \citep{manjunath2013echo}.
More recently, these results were rigorously formalized by \citet{grigoryeva2018echo,grigoryeva2018universal}, who proved that ESNs are universal uniform approximators for discrete-time fading memory filters, subject to uniformly bounded inputs.
In the domain of spiking neural networks, a parallel result holds: Liquid State Machines (LSMs) achieve universal computational power for all time-invariant filters that exhibit the FMP \citep{maass2004computational}.

Generalizing these results, we observe that universality is not unique to standard reservoir architectures.
If the domain of the functional $H$ is restricted to a space of uniformly bounded sequences with the fading memory property, various families of state-space transformations can approximate it uniformly.
These include linear systems with polynomial or neural network readouts \citep{coleman1968general,boyd1985fading,grigoryeva2018universal,gonon2019reservoir}, state-affine systems with linear readouts \citep{grigoryeva2018universal,gonon2020risk}, and the previously mentioned echo state networks \citep{grigoryeva2018echo,gonon2019reservoir,gonon2021fading,gonon2023approximation}.
More recently, Simple Cycle Reservoirs were also shown to be universal for this set of systems \citep{li2024simple}. 
Departing from these state-space approaches, a new method has been developed to approximate FMP systems using a kernel representation of the model \citep{huo2024kernel}.

\citet{dehghani2018universal} introduced the recurrently-stacked Universal Transformer, and \citet{yun2019transformers} showed that transformer models are universal approximators of continuous permutation-equivariant sequence-to-sequence functions with compact support.
Independently, \citet{bai2019deq} proposed the Deep Equilibrium Model (DEQ), an implicit-depth architecture based on root-finding for steady states of weight-tied deep networks; DEQs can approximate any sequence whose underlying dynamics converge to a fixed point.
The DEQ, which can be seen as an implicit-depth model similar to neural ordinary differential equations, was later extended to capture systems with arbitrary invariant sets; however, its universal approximation property has not yet been established \citep{konishi2023stable}.
Finally, assuming a global attractor property, infinite time approximation can be guaranteed~\citep{yi2023nmode,mei2024controlsynth}.

\subsection{Approximation of Diffeomorphisms and Flows}

\paragraph{Flow maps.}
Neural ODEs and invertible neural networks are universal approximators for diffeomorphisms~\citep{huang2018neural, jaini2019sum, teshima2020uap, teshima2020coupling, ishikawa2023uap, massaroli2020nodes,kuehn2023embedding}.
For dynamical systems, this guarantees only finite-time approximation: the flow $\varphi_t$ is a diffeomorphism for each fixed $t$, but approximating $\varphi_T$ does not control behavior for $t > T$.

\paragraph{Topological conjugacy.}
ESNs can be trained to have topologically conjugate dynamics to structurally stable systems~\citep{hart2020embedding}.
For NODEs, this follows immediately from structural stability (Theorem~\ref{thm:palis_smale}): $C^1$-close vector fields yield topologically equivalent flows.
Our contribution is making this observation precise with quantitative $\varepsilon$-$\delta$ bounds.
Finally, we would like to mention the possibility to construct dynamics with Cohen-Grossberg networks for arbitrary sets of fixed point and limit cycles through embedding of dynamics  \citep{cohen1992construction}.
Finally, models can be constructed to realize attractors \citep{danca2024hopfield}, though general infinite-horizon guarantees for these results are lacking.

\subsection{Why Multistability Requires New Theory}

\paragraph{Fundamental obstructions.}
Multistable systems exhibit two failure modes absent in FMP systems:
\begin{enumerate}[leftmargin=*]
\item \textbf{B-type error}: Near separatrices, arbitrarily small perturbations cause trajectories to converge to different attractors (Figure~\ref{fig:separatrices}).
\item \textbf{P-type error}: For limit cycles, any period mismatch causes unbounded phase drift as $t \to \infty$.
\end{enumerate}

\paragraph{Structural stability as the key.}
Morse-Smale systems are the natural target class: they are structurally stable (small perturbations preserve qualitative dynamics) yet allow multistability.
This paper provides universal approximation results exploiting this property.

\paragraph{Beyond Morse-Smale.}
Chaotic attractors (C-type error) require fundamentally different metrics, for example, invariant measures or attractor reconstruction~\citep{dellnitz1999approximation, hart2024attractor}, and remain outside our framework.

\subsection{Computational Universality}

For completeness, we note that RNNs are Turing universal~\citep{moore1990unpredictability, pollack1991induction, siegelmann1992computational, kilian1996universality, moore1998finite, siegelmann1994analog}.
This computational universality mirrors that of smooth ODEs~\citep{branicky1995universal}, Hopfield nets~\citep{sima2003continuous}, and finite automata simulations~\citep{indyk1995optimal, kremer1995computational, sperduti1997computational, korsky2019computational}.
However, computational universality is orthogonal to approximation: it concerns symbolic computation, not trajectory tracking.
Our results address the latter.

\begin{landscape}
\subsection{Overview}
\begin{table}[ht]
    \centering
    \caption{Comprehensive Landscape of Universal Approximation for Dynamical Systems}
    \label{tab:uap_landscape}
    \small 
    % We expand to 1.6\textwidth to use the full landscape page width.
    % We define 4 X-columns. The sum of coefficients (1.6 + 0.8 + 0.8 + 0.8) = 4.0.
    \begin{tabularx}{1.6\textwidth}{@{}l >{\hsize=1.3\hsize}X l >{\hsize=0.8\hsize}X >{\hsize=0.8\hsize}X >{\hsize=0.99\hsize}X@{}}
        \toprule
        \textbf{Category} & \textbf{Citations} & \textbf{Hor.} & \textbf{Metric / Guarantee} & \textbf{Stability / Constraint} & \textbf{Asymptotic Property} \\ 
        \midrule
        Computation & \citep{moore1990unpredictability, pollack1991induction, siegelmann1992computational, siegelmann1994analog, kilian1996universality, moore1998finite, branicky1995universal, sima2003continuous, indyk1995optimal, kremer1995computational, sperduti1997computational, korsky2019computational} & Inf. & Turing universality & Analog computation & Recursive equivalence \\ 
        \addlinespace
        RNN & \citep{li1992approximation, funahashi1993approximation, doya1993universality} & Fin. & $L^\infty$ Trajectory tracking & Compact state space & Local existence \\ 
        \addlinespace
        Operator & \citep{chen1993uap, chen1995uap, narendra1990identification, warwick1992neural, lanthaler2023neuraloscillators, huang2025upper, chen1992adaptive, chen1992neural, choi1996constructive, tan1995efficient, garces2012strategies, kambhampati2000approximation, hasani2018liquid} & Fin. & Operator norm & Continuous signals & Signal mapping \\ 
        \addlinespace
        Fin. seq. & \citep{jin1995universal, patan2008approximation, hammer2000approximation, aguiar2023universal, orvieto2023universality, song2023minimal, bai2018empirical, jiang2021approximation, yun2019transformers, dehghani2018universal} & Fin. & Uniform convergence & Finite time window & Temporal mapping \\ 
        \addlinespace
        Grönwall & \citep{sontag1992neural, funahashi1993approximation, chow2000modeling, li2005approximation} & Fin. & Exp. error bound & Lipschitz continuity & Vacuous ($t \to \infty$) \\ 
        \addlinespace
        Flow & \citep{tabuada2020universal, li2022deep, zakwan2023universal, nakamura2009approximation, psaltis1988multilayered, li1989control, parisini1998neural} & Fin. & Trajectory realization &  & Flow approximation \\ 
        \addlinespace
        Diffeo. & \citep{huang2018neural, jaini2019sum, teshima2020uap, teshima2020coupling, ishikawa2023uap, massaroli2020nodes, kuehn2023embedding} & Fin. & Diffeo. mapping & Homotopy to identity & Coordinate warping \\ 
        \addlinespace
        FMP & \citep{boyd1985fading, jaeger2001echo, grigoryeva2018echo, grigoryeva2018universal, sepulchre2021fading, matthews1993approximating, gonon2021fading, maass2004computational, maass2007computational, gonon2019reservoir, gonon2020risk, gonon2023approximation, li2024simple, yi2023nmode} & Inf. & Decaying history & Unique global eq. & Global monostability \\ 
        \addlinespace
        Contractive & \citep{bishop2022universal, bishop2026recurrent, bai2019deq, konishi2023stable, li2021curse, li2022approximation, wang2024state} & Inf. & Stationary $L^p$ error & Exp. $p$-contraction & Unique measure \\ 
        \addlinespace
        Incr. stab. & \citep{hanson2020universal, chua1976qualitative, manjunath2020stability, miller2018stable} & Inf. & Uniform tracking & Asymptotic incr. stab. & Trajectory sync. \\ 
        \addlinespace
        Top. equiv. & \citep{hart2020embedding, cohen1992construction} & Inf. & Qualitative tracking & Structural stability & Topological conjugacy \\ 
        \addlinespace
        Formal Sim. & \citep{girard2008approximate, girard2009hierarchical, pola2004bisimulation, vanderschaft2004bisimulation, zamani2014symbolic} & Inf. & Bisimulation distance & Metric / Symbolic & Simulation equivalence \\ 
        \midrule
        \textbf{This Work} & \textbf{Thrm.~\ref{thm:uap_fp}, \ref{thm:uap_lc}, \ref{thm:uap_ca}} & \textbf{Inf.} & \textbf{$\varepsilon$-$\delta$ closeness} & \textbf{Morse-Smale / NHIM} & \textbf{Multistable attractors} \\ 
        \bottomrule
    \end{tabularx}
\end{table}
\end{landscape}

% NeurIPS paper checklist omitted from the arXiv preprint; restore for the camera-ready.
% \newpage
% \input{checklist.tex}

\end{document}